
\documentclass[12pt,reqno]{amsart}
\usepackage{amssymb}
\usepackage{amsfonts}
\usepackage{hyperref}
\usepackage{graphicx}
\usepackage{color}
\usepackage{amsmath}
\usepackage[font=scriptsize,labelfont=bf]{caption}

\setcounter{MaxMatrixCols}{10}

\graphicspath{{./CurtisPictureFile/}}
\definecolor{myurlcolor}{rgb}{0,0.6,0.2}
\hypersetup{colorlinks, linkcolor=myurlcolor, citecolor=myurlcolor, urlcolor=myurlcolor}
\def\equationautorefname~#1\null{(#1)\null}

\theoremstyle{plain}

\newtheorem{algorithm}{Algorithm}[section]
\newtheorem{thm}{Thm}

\newtheorem{corollary}[algorithm]{Corollary}

\newtheorem{definition}[algorithm]{Definition}
\newtheorem*{Remark-delta}{Remark on all things $\protect\delta$}

\newtheorem{lemma}[algorithm]{Lemma}

\newtheorem{theorem} [algorithm] {Theorem}
\newtheorem{theoremlet}[thm]{Theorem}

\newtheorem{definitionlet}[thm]{Definition}

\newtheorem*{definitionnonum}{Definition}

\newtheorem{proposition}[algorithm]{Proposition}

\numberwithin{equation}{algorithm}

\textwidth=6.5in
\oddsidemargin=0in
\evensidemargin=0in
\textheight=8.5in
\topmargin=0.25in

\begin{document}
\title{On Delaunay Triangulations of Gromov Sets}
\author{Curtis Pro}
\address{Department of Mathematics, California State University, Stanislaus}
\email{cpro@csustan.edu}
\author{Frederick Wilhelm}
\address{Department of Mathematics, University of California, Riverside}
\email{fred@math.ucr.edu}
\urladdr{https://sites.google.com/site/frederickhwilhelmjr/home}
\thanks{This work was supported from by a grant from the Simons Foundation
(\#358068)}

\begin{abstract}
Let $Y$ be a subset of a metric space $X.$ We say that $Y$ is $\eta $%
--Gromov provided $Y$ is $\eta $--separated and not properly contained in
any other $\eta $--separated subset of $X.$ In this paper, we review a
result of Chew which says that any $\eta $-Gromov subset of $\mathbb{R}^{2} $
admits a triangulation $\mathcal{T}$ whose smallest angle is at least $\pi
/6 $ and whose edges have length between $\eta $ and $2\eta .$ We then show
that given any $k = 1,2,3\ldots$, there is a subdivision $\mathcal{T}
_{k}$ of $\mathcal{T}$ whose edges have length in $\left[ \frac{\eta}{10 k}
 ,\frac{2\eta}{10 k} \right] $ and whose minimum angle is also $%
\pi /6$.

These results are used in the proof of the following theorem in [10]: For
any $k\in R,v>0,$ and $D>0,$ the class of closed Riemannian $4$--manifolds
with sectional curvature $\geq k,$ volume $\geq v,$ and diameter $\leq D$
contains at most finitely many diffeomorphism types. Additionally, these
results imply that for any $\varepsilon >0$, if $\eta >0$ is sufficiently
small, any $\eta $--Gromov subset of a compact Riemannian $2$--manifold
admits a geodesic triangulation $\mathcal{T}$ for which all side lengths are
in $\left[ \eta \left( 1-\varepsilon \right) ,2\eta \left( 1+\varepsilon
\right) \right] $ and all angles are $\geq \frac{\pi }{6}-\varepsilon .$
\end{abstract}

\maketitle

Let $\mathcal{M}_{k,v,d}^{K,V,D}\left( n\right) $ denote the class of closed
Riemannian $n$--manifolds $M$ with 
\begin{equation*}
\begin{array}{cclccc}
k & \leq & \sec \,M & \leq & K, &  \\ 
v & \leq & \mathrm{vol}\,M & \leq & V, & \mathrm{and} \\ 
d & \leq & \mathrm{diam}\,M & \leq & D, & 
\end{array}%
\end{equation*}%
where $\sec $ is sectional curvature, $\mathrm{vol}$ is volume, and $\mathrm{%
diam}$ is diameter.

Cheeger's finiteness theorem says that $\mathcal{M}_{k,v,0}^{K,\infty
,D}\left( n\right) $ contains only finitely many diffeomorphism types (\cite%
{Cheeg1}, \cite{Cheeg2}, \cite{KirSie}, \cite{Pete}). In \cite{ProWilh}, we
prove the following, which generalizes Cheeger's finiteness theorem and the
result of Grove, Petersen and Wu from \cite{GrovPetWu}.

\begin{theoremlet}
\label{finiteness} For any $k\in \mathbb{R},$ $v>0,$ $D>0,$ and $n\in 
\mathbb{N},$ the class of closed Riemannian $n$--manifolds $\mathcal{M}%
_{k,v,0}^{\infty ,\infty ,D}\left( n\right) $ contains at most finitely many
diffeomorphism types.
\end{theoremlet}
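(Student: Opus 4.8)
The plan is to combine Gromov--Hausdorff precompactness with a stability argument, reducing the theorem to a statement about smooth structures, and then to treat the critical dimension four by means of the controlled triangulations built in this paper. First, by Gromov's precompactness theorem the hypotheses $\sec \geq k$ and $\mathrm{diam}\leq D$ make the class $\mathcal{M}_{k,v,0}^{\infty ,\infty ,D}(n)$ precompact in the Gromov--Hausdorff topology, and the lower volume bound $\mathrm{vol}\geq v$ rules out collapse, so every Gromov--Hausdorff limit of a sequence from the class is a compact $n$--dimensional Alexandrov space of curvature $\geq k$. It therefore suffices to show that each such limit $X$ admits a Gromov--Hausdorff neighborhood in which all members of the class are mutually diffeomorphic; a finite subcover of the (compact) closure of the class then gives the stated finiteness. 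Perelman's stability theorem supplies such a neighborhood at the level of homeomorphisms, and hence shows at once that the class realizes only finitely many homeomorphism types.

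It remains to upgrade ``homeomorphic'' to ``diffeomorphic.'' For $n\leq 3$ this is automatic, since TOP $=$ DIFF by the theorems of Rad\'o and Moise. For $n\geq 5$ smoothing theory shows that a fixed compact topological $n$--manifold carries only finitely many smooth structures---they are governed by homotopy classes of maps into $TOP/O$, a finite set because $M$ is a finite complex and the homotopy groups of $TOP/O$ are finite---so homeomorphism finiteness already yields diffeomorphism finiteness. The genuine obstacle is $n=4$, where TOP, PL and DIFF diverge and no smoothing argument of this kind is available; this is exactly the case flagged in \cite{ProWilh}.

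In dimension four I would use the triangulation machinery developed here. A maximal $\eta$--separated (that is, $\eta$--Gromov) subset $\{p_i\}$ of $M\in\mathcal{M}_{k,v,0}^{\infty ,\infty ,D}(4)$ has a uniformly bounded cardinality $N=N(k,v,D,\eta)$: the balls $B(p_i,\eta/2)$ are disjoint, each has volume at least $\frac{v}{\mathrm{vol}_k(D)}\,\mathrm{vol}_k(\eta/2)$ by Bishop--Gromov relative volume comparison (where $\mathrm{vol}_k(r)$ denotes the volume of an $r$--ball in the constant-curvature-$k$ model), and together they fit inside $M$, whose volume is at most $\mathrm{vol}_k(D)$. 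The goal is then to promote such a net to an honest geometric triangulation $\mathcal{T}_M$ of $M$ with uniformly controlled simplices and at most $N$ vertices, refining the planar construction of Chew and its surface analogue from this paper to the links and strata that arise in dimension four. Since a smooth $4$--manifold has a canonical compatible PL structure and PL $=$ DIFF in dimension four, two such manifolds are diffeomorphic as soon as their triangulations are PL-isomorphic; as there are only finitely many simplicial complexes on at most $N$ vertices, finiteness of diffeomorphism types follows. Equivalently, one shows that the combinatorial type of $\mathcal{T}_M$ is locally constant in the Gromov--Hausdorff topology, so the neighborhood of the first step may be taken to fix the PL type.

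The hard part is the construction of the triangulation itself. Under a lower curvature bound alone there is no lower injectivity-radius bound and no harmonic-coordinate regularity, and the limit Alexandrov spaces may be singular; small metric balls need not be convex or even contractible at a uniform scale, so a priori the nerve of a net need not be homeomorphic to $M$. Guaranteeing that the Delaunay complex is a bona fide PL triangulation with controlled combinatorics---and that this combinatorial type persists under Gromov--Hausdorff perturbation near the singular set of the limit---is precisely where the edge-length and minimum-angle estimates proved here must be invoked and extended, and it is the step I expect to carry the full weight of the argument.
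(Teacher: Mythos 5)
Your reduction is the standard one and matches what the paper itself records: Gromov precompactness plus Perelman stability gives local homeomorphism finiteness, $n\leq 3$ is handled by Moise, and $n\geq 5$ by Kirby--Siebenmann smoothing theory (finiteness of $[M,TOP/O]$). The paper explicitly defers the proof of Theorem~\ref{finiteness} to \cite{ProWilh} and only supplies the planar triangulation machinery used there, so there is no in-paper proof to compare line by line; but your proposal does not close the one case that is actually at issue. For $n=4$ you propose to build a controlled geometric (Delaunay-type) triangulation of the $4$--manifold itself from an $\eta$--net, show its combinatorial type is locally constant in the Gromov--Hausdorff topology, and invoke $PL=DIFF$ in dimension four. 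You then candidly acknowledge that the construction of that triangulation, its status as a genuine PL (indeed Whitehead-compatible) triangulation, and its stability near the singular set of the limit Alexandrov space are all unproven and ``carry the full weight of the argument.'' That is the entire content of the theorem in dimension four; as written, the proposal is a program, not a proof.

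Beyond the admitted gap, the mechanism you propose does not match the one the paper is building toward. The machinery developed here is a family of triangulations of subsets of $\mathbb{R}^{2}$ with uniform angle bounds, extension to all of $\mathbb{R}^{2}$, and self-similar subdivision (Definition~\ref{SubEx dfn}, Theorems~\ref{CDG them} and~\ref{alm leg subdiv thm}), together with quantitative stability of Delaunay edges under perturbation (Section~\ref{epilog section}). Nothing here is aimed at producing a global Delaunay triangulation of a $4$--manifold; the two-dimensional complexes are meant to be fed into the argument of \cite{ProWilh} as auxiliary objects (note that even on surfaces the paper only obtains \emph{almost} Delaunay triangulations with angles $\geq \pi/6-\varepsilon$, via Theorem~\ref{compact surfaces thm}). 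A direct $4$--dimensional Delaunay construction faces obstructions you partially name --- no injectivity radius bound, slivers, non-manifold nerves, singular limits --- and there is no indication in the literature or in this paper that it can be carried out under the hypotheses $\sec\geq k$, $\mathrm{vol}\geq v$, $\mathrm{diam}\leq D$ alone. So the gap is not merely an omitted technical verification: the key idea for dimension four is absent, and the sketch you substitute for it points in a direction the supporting paper does not underwrite.
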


Except in dimension $4,$ this result was established in the early 1990s via
work of Grove--Petersen--Wu, Perelman, and Kirby-Siebenmann in \cite%
{GrovPetWu}, \cite{Perel1}, \cite{Kap2}, and \cite{KirSie}. For further
details on finiteness theorems we refer the reader to \cite{Gro}. Our
argument in \cite{ProWilh} only treats the case $n=4,$ and depends on the
fact that there is a special family of simplicial complexes $\mathcal{T}$ in 
$\mathbb{R}^{2}.$ This family has a uniform lower bound for all angles and
certain subdivision and extension properties. The purpose of this paper is
to establish the existence of such a family. More specifically, we show that
there are nonempty examples of

\begin{definitionlet}
\label{SubEx dfn}Let $\mathcal{F}$ be a family of triangulations of subsets
of $\mathbb{R}^{2}.$ Given $\theta _{0},\sigma _{0}>0,$ we say that $%
\mathcal{F}$ is $\left( \theta _{0},\sigma _{0}\right) $--nondegenerate,
extendable, and subdividable provided:

\begin{enumerate}
\item All angles of all triangles in all $\mathcal{T\in F}$ are $\geq \theta
_{0}.$

\item For all $\mathcal{T\in F}$ all ratios of all edge lengths of $\mathcal{%
T}$ are bounded from above by $\sigma _{0}.$

\item For all $\mathcal{T\in F}$ there is a $\mathcal{T}_{\mathrm{ext}}%
\mathcal{\in F}$ so that $\mathcal{T}_{\mathrm{ext}}$ triangulates $\mathbb{R%
}^{2}$ and $\mathcal{T}\subset \mathcal{T}_{\mathrm{ext}}.$

\item Given any $\varepsilon >0$ and any $\mathcal{T\in F}$ there is a
subdivision $\mathcal{\tilde{T}\ }$of $\mathcal{T}$ so that $\mathcal{\tilde{%
T}\in F}$ and all edges of $\mathcal{\tilde{T}}$ have length $<\varepsilon .$
\end{enumerate}
\end{definitionlet}

Here we show

\begin{theoremlet}
\label{CDG them}The family $\mathcal{F}$ of $\left( \frac{\pi }{6},2\right) $%
--nondegenerate, extendable, and subdividable triangulations of $\mathbb{R}%
^{2}$ is not empty.
\end{theoremlet}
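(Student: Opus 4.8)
The plan is to build $\mathcal{F}$ directly out of the two results quoted in the abstract: Chew's theorem supplies the base triangulations, the subdivision theorem supplies the refinements, and the four clauses of Definition~\ref{SubEx dfn} are then verified by bookkeeping the angle and edge-length bounds those results already guarantee. Concretely, let $\mathcal{F}$ consist of every triangulation obtained by applying Chew's theorem to an $\eta$-Gromov subset of $\mathbb{R}^2$ (for every $\eta>0$), together with every subdivision $\mathcal{T}_k$ of such a triangulation produced by the subdivision theorem, and the iterates of this subdivision operation. To see $\mathcal{F}\neq\emptyset$, take the regular triangular lattice $\Lambda$ with minimal distance $\eta$: it is $\eta$-separated, and its covering radius $\eta/\sqrt{3}$ is $<\eta$, so no point of $\mathbb{R}^2$ can be adjoined without destroying $\eta$-separation; hence $\Lambda$ is $\eta$-Gromov, and Chew's theorem applied to $\Lambda$ returns a triangulation of $\mathbb{R}^2$ lying in $\mathcal{F}$ (for the lattice this is the equilateral triangulation, with all edges $\eta$ and all angles $\pi/3$).

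Clauses (1) and (2) are then immediate from the stated bounds. By Chew's theorem every base member has smallest angle $\geq\pi/6$, and by the subdivision theorem every $\mathcal{T}_k$ again has smallest angle $\pi/6$; since these bounds persist under iteration, clause (1) holds. For clause (2), a base member has all edge lengths in $[\eta,2\eta]$ and each $\mathcal{T}_k$ has all edge lengths in $[\eta/(10k),2\eta/(10k)]$, so in either case the ratio of any two edge lengths is at most $2$.

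The one clause whose proof is not pure bookkeeping is extendability, clause (3), and here the key observation is that it is actually \emph{free}: because an $\eta$-Gromov set $Y\subset\mathbb{R}^2$ is a maximal $\eta$-separated set, every point of $\mathbb{R}^2$ lies within $\eta$ of $Y$, so $Y$ has points arbitrarily far in every direction and its convex hull is all of $\mathbb{R}^2$. Consequently the Chew triangulation of $Y$ already triangulates the whole plane, and a subdivision of a triangulation of $\mathbb{R}^2$ again triangulates $\mathbb{R}^2$; thus for every $\mathcal{T}\in\mathcal{F}$ one may simply take $\mathcal{T}_{\mathrm{ext}}=\mathcal{T}$. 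Finally, for subdividability, clause (4), given $\varepsilon>0$ and $\mathcal{T}\in\mathcal{F}$ at scale $\eta$, I invoke the subdivision theorem with $k$ chosen so that $2\eta/(10k)<\varepsilon$, producing $\tilde{\mathcal{T}}=\mathcal{T}_k\in\mathcal{F}$ with all edges shorter than $\varepsilon$.

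I expect the main technical point to be closure under clause (4) when it is applied repeatedly to a member that is already a subdivision: re-invoking the subdivision theorem requires knowing that its output is again a Chew-type triangulation of a Gromov set, so that its hypotheses are met a second time. This can be arranged either by checking that the subdivision construction preserves the empty-circumcircle property, so that its vertex set is again maximal $\eta'$-separated at the new scale, or more simply by restricting $\mathcal{F}$ to the regular triangular lattices and their regular refinements: refining each equilateral triangle into $m^2$ congruent equilateral triangles yields the triangular lattice at scale $\eta/m$, which is again $\eta/m$-Gromov with Chew triangulation in $\mathcal{F}$. This subfamily is manifestly closed under subdivision and already witnesses the theorem with the sharper constants $(\pi/3,1)$.
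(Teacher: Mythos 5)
Your main line is the paper's own argument: the witnessing family is the collection of CDGs (Delaunay triangulations of $\eta$--Gromov subsets of $\mathbb{R}^{2}$ together with their subcomplexes and refinements), Chew's Angle Theorem \ref{Gromov-Del Prop} gives clauses (1)--(3) of Definition \ref{SubEx dfn}, and the subdivision theorem gives clause (4). The ``main technical point'' you flag --- that re-invoking the subdivision theorem requires its output to again be a Chew-type triangulation of a Gromov set at the finer scale --- is exactly right, and it is not something to be ``arranged'' as an afterthought: it is the assertion of Theorem \ref{alm leg subdiv thm} (the subdivision $\mathcal{\tilde{T}}$ is itself an $\frac{\eta}{10}$--CDG, so Chew's theorem re-applies at every stage), and proving it is the entire content of Section \ref{alm legal sub div sect}; the naive scheme of cutting every edge into equal pieces fails because adjacent subdivided edges meeting at an angle near $\frac{\pi}{6}$ need not stay $\frac{\eta}{10}$--separated, which is why the paper needs the notion of almost legal subdivisions. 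Your fallback --- the equilateral lattices at all scales $\eta/m$, closed under $m$-fold refinement --- is a correct and essentially trivial proof of the literal statement of Theorem \ref{CDG them}, and indeed with the sharper constants $\left(\frac{\pi}{3},1\right)$; but it vacates the theorem of the content needed for the application, since the finiteness argument of \cite{ProWilh} requires the family to contain the Delaunay triangulations of the arbitrary Gromov sets produced by the manifolds (and their subcomplexes, which is also why clause (3) is stated nontrivially rather than being satisfiable by $\mathcal{T}_{\mathrm{ext}}=\mathcal{T}$), not merely lattices. So: correct as a proof of the stated theorem, same skeleton as the paper on the main line, but the one step you defer is precisely where all the work in the paper lives.
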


There are numerous papers in the computer science and computational geometry
literature that address the non-degeneracy and extension aspects of this
theorem (see e.g. \cite{ChengDeyShew} and the references therein). Among
these, Chew's is the most useful for our purposes (\cite{Chew}). It is based
on Delaunay triangulations of what we have decided to call Gromov sets.

A triangulation $\mathcal{T}$ of a point set of $\mathbb{R}^{2}$ is called 
\textbf{Delaunay} if and only if the circumdisk of each $2$--simplex
contains no vertices of $\mathcal{T}$ in its interior (see e.g. \cite{GaHoff}%
, Chapter 6). Among all possible triangulations of a given point set, it is
well known that a Delaunay triangulation maximizes the minimal angle (see
e.g. Theorem 9.9 of \cite{de berg}).

\begin{figure}[tbp]
\includegraphics[scale=.34]{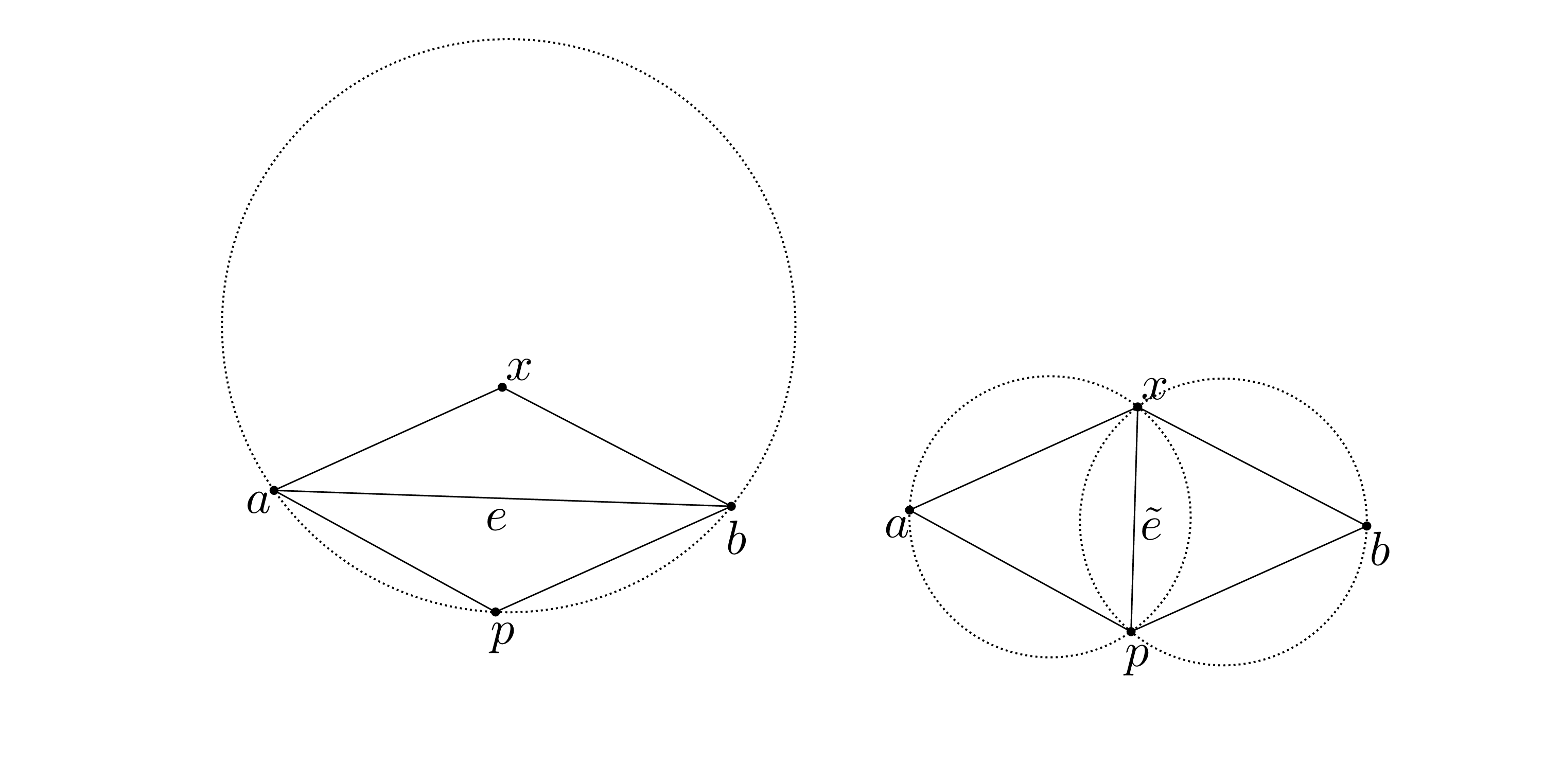}
\caption{{\protect\tiny {The triangulation of the four vertices on the left
fails the circumdisk property and is not Delaunay. The opposite is true for
the triangulation on the right.} }}
\label{lawlabel}
\end{figure}

The notion of a \textbf{Gromov subset }of a metric space is motivated by the
proof of Gromov's precompactness theorem, the notion of totally bounded\
metric spaces, and Gromov, Perelman, and Burago's notion of rough volume (%
\cite{BGP},\cite{Petersen}).

\begin{definitionnonum}
Given a metric space $X$ and $\eta >0,$ we say that $Y\subset X$ is $\eta $%
\textbf{--Gromov} provided $Y$ is $\eta $--separated and maximal with
respect to inclusion, that is, $Y$ is not properly contained in any $\eta $%
--separated subset of $X.$
\end{definitionnonum}

\begin{definitionnonum}[Chew-Delaunay-Gromov Complexes]
We call a simplicial complex $\mathcal{T}$ in $\mathbb{R}^{2}$ an $\eta $%
\textbf{--CDG}, provided, $\mathcal{T}$ is a subcomplex of a Delaunay
triangulation $\mathcal{\hat{T}}$ of an $\eta $\textbf{--}Gromov subset of $%
\mathbb{R}^{2}.$ If $\mathcal{T=\hat{T}},$ then $\mathcal{T}$ is called a 
\textbf{maximal CDG}$.$
\end{definitionnonum}

The algorithm on Pages 7--8 of \cite{Chew} combined with the Theorem on Page
9 and the Corollary on Page 10 of \cite{Chew} give the following.

\begin{theoremlet}[Chew's Angle Theorem]
\label{Gromov-Del Prop} Let $\mathcal{T}$ be a CDG complex in $\mathbb{R}%
^{2} $. Then all angles of $\mathcal{T}$ are $\geq \frac{\pi }{6}$ and all
edges of $\mathcal{T}$ are in $\left[ \eta ,2\eta \right] .$
\end{theoremlet}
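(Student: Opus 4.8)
The plan is to distill two geometric facts from the hypothesis that the vertex set $Y$ is $\eta$--Gromov, and then to read off both conclusions from the Delaunay empty--circumdisk property by elementary trigonometry. First, $\eta$--separation says that any two distinct points of $Y$ are at distance $\geq \eta$; since every edge of $\mathcal{T}$ joins two vertices of $Y$, this at once gives the lower bound $\geq \eta$ on edge lengths. Second, maximality says that no point of $\mathbb{R}^2$ can be adjoined to $Y$ without destroying $\eta$--separation, i.e. $\dist(p,Y)<\eta$ for every $p\in \mathbb{R}^2\setminus Y$. Because $\mathcal{T}$ is a subcomplex of a full Delaunay triangulation $\hat{\mathcal{T}}$, every edge of $\mathcal{T}$ is a side of some triangle of $\hat{\mathcal{T}}$ and every angle of $\mathcal{T}$ is an angle of such a triangle, so it suffices to bound the edges and angles of an arbitrary Delaunay triangle $T$.

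The key step, which I expect to be the crux, is the bound $R<\eta$ on the circumradius $R$ of $T$. Let $c$ be its circumcenter. The Delaunay property says the open circumdisk of radius $R$ about $c$ contains no vertex of $Y$, so $\dist(c,Y)\geq R$. On the other hand $c$ is at distance $0<R$ from itself, so $c$ cannot itself be a vertex of $Y$ (that would place a vertex inside the circumdisk); thus $c\in \mathbb{R}^2\setminus Y$, and maximality forces $\dist(c,Y)<\eta$. Combining the two inequalities yields $R\leq \dist(c,Y)<\eta$. I would stress that this argument is insensitive to whether $T$ is acute or obtuse and to the presence of extra cocircular vertices, since only the minimal distance from $c$ to $Y$ enters.

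With $R<\eta$ in hand both conclusions follow quickly. Each edge of $T$ is a chord of its circumcircle, hence of length at most the diameter $2R<2\eta$; together with the separation bound this places every edge of $\mathcal{T}$ in $[\eta,2\eta]$. For the angles I would invoke the law of sines in the form $\sin\alpha=\ell/(2R)$, where $\ell$ is the side opposite the angle $\alpha$. Since $\ell\geq \eta$ while $R<\eta$, this gives $\sin\alpha>\tfrac{1}{2}$, hence $\alpha>\tfrac{\pi}{6}$; as this holds for every angle of every triangle of $\hat{\mathcal{T}}$, all angles of $\mathcal{T}$ are $\geq \tfrac{\pi}{6}$.

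The only genuine obstacle I foresee lies not in these estimates but in the standing hypotheses beneath them: one must know that the (infinite, unbounded) $\eta$--Gromov set $Y$ actually admits a locally finite Delaunay triangulation in which every edge bounds a triangle, and one must cope with the nonuniqueness produced by cocircular vertices. These are precisely the points handled by Chew's algorithm and the accompanying results cited above; granting them, the theorem reduces to the circumradius estimate and the law of sines just described.
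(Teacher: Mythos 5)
Your proposal is correct and follows essentially the same route as the paper: maximality gives $\eta$--density, the empty--circumdisk property plus density bounds the circumradius by $\eta$, the chord bound gives edge lengths $\leq 2\eta$, and the extended law of sines (equivalently, the paper's appeal to Euclid's Central Angle Theorem) converts the circumradius and edge--length bounds into the $\pi/6$ angle bound. The remaining points you flag (existence of the Delaunay triangulation and cocircular nonuniqueness) are handled in the paper exactly as you suggest, by citation and by reducing to a subcomplex of a maximal CDG.
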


Any non-colinear collection of points in $\mathbb{R}^{2}$ admits a Delaunay
triangulation (\cite{GaHoff}, Theorem 6.10), and, by definition, CDGs can
always be extended to all of $\mathbb{R}^{2}$, so Chew's Angle Theorem
implies that the family of all CDGs satisfies Properties 1--3 of Definition %
\ref{SubEx dfn} with $\left( \theta _{0},\sigma _{0}\right) =\left( \frac{%
\pi }{6},2\right) .$ Thus to prove Theorem \ref{CDG them}, it suffices to
show that CDGs also satisfy Property 4 of Definition \ref{SubEx dfn}.

The proof of Theorem \ref{finiteness} also exploits complexes that are close
to, but not quite, $\mathrm{CDGs.}$ In particular, we will study Delaunay
triangulations of sets $S_{i}$ with the same combinatorial structure as a
fixed Delaunay triangulation on a set $S,$ provided $S_{i}$ and $S$ are
sufficiently close. One complication is that a fixed subset $S\subset 
\mathbb{R}^{2}$ can have more than one Delaunay triangulation, hence subsets 
$S_{i}$ arbitrarily close to $S$ can have Delaunay triangulations
combinatorially distinct from a prescribed triangulation of $S.$ The
following two definitions are part of our strategy to account for these
issues.

\begin{definitionlet}
\label{del grph dfn}\emph{(\cite{GaHoff}, page 74)} For $S\subset \mathbb{R}%
^{2}$ we say a segment $e$ between two points of $S$ is in the \textbf{%
Delaunay graph} of $S$ if and only if $e$ is an edge of every Delaunay
triangulation of $S.$
\end{definitionlet}

\begin{definitionlet}
\label{stable edge dfn}Given a discrete $S\subset \mathbb{R}^{2},$ $\xi >0$,
and a segment $ab$ between two points $a$ and $b$ of $S,$ we say that $ab$
is \textbf{$\xi $--stable} provided the following holds. For any embedding $%
\iota : S \longrightarrow \mathbb{R}^{2}$ so that 
\begin{eqnarray*}
\left\vert \iota -\mathrm{id}_{S}\right\vert &<&\xi ,
\end{eqnarray*}%
the segment $\iota \left( a\right) \iota \left( b\right) $ is in the
Delaunay graph of $\iota \left( S\right) .$
\end{definitionlet}

The fact that $\mathrm{CDGs}$ satisfy Property 4 of Definition \ref{SubEx
dfn} is a consequence of the following result.

\begin{theoremlet}
\label{alm leg subdiv thm}Let $\mathcal{T}$ be an $\eta $--\textrm{CDG. }%
There is a subdivision $\mathcal{\tilde{T}}$ of $\mathcal{T}$ that is an $%
\frac{\eta }{10}$--\textrm{CDG}. Moreover, each edge $\tilde{e}$ of $%
\mathcal{\tilde{T}}$ that is a subedge of an edge $e$ of $\mathcal{T}$
satisfies one of the following conditions:

\begin{enumerate}
\item If $\tilde{e}$ does not contain a vertex of $e,$ then $e$ is $\left( 
\frac{1}{100}\frac{\eta }{10}\right) $--stable.

\item If $\tilde{e}$ contains a vertex of $e,$ then 
\begin{equation*}
\mathrm{length}\left( \tilde{e}\right) =\frac{\mathrm{length}\left( e\right) 
}{10}.
\end{equation*}%
In particular, each triangle $\tilde{\Delta}$ of $\mathcal{\tilde{T}}$ that
has a vertex in $\mathcal{T}$ is similar to the triangle $\Delta $ of $%
\mathcal{T}$ with $\tilde{\Delta}\subset \Delta .$
\end{enumerate}
\end{theoremlet}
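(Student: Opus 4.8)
The plan is to realize the subdivision $\mathcal{\tilde{T}}$ as the restriction, to the support of $\mathcal{T}$, of a Delaunay triangulation of an explicitly constructed $\frac{\eta}{10}$--Gromov set $\tilde{S}$, letting the two enumerated conditions govern where the new vertices go. Near every vertex $v$ of $\mathcal{T}$ I would impose exact self-similarity: along each edge $e=vw$ of $\mathcal{T}$ place the first subdivision point at distance $\mathrm{length}(e)/10$ from $v$, and inside each triangle $\Delta$ with corner $v$ introduce the scaled-by-$\frac{1}{10}$ copy of $\Delta$ based at $v$. Since $\mathrm{length}(e)\in[\eta,2\eta]$ by Chew's Angle Theorem, these end-subedges have length in $\left[\frac{\eta}{10},\frac{2\eta}{10}\right]$ and the corner triangles so created are similar to $\Delta$ with ratio $\frac{1}{10}$; this is exactly condition~2, and the Delaunay check that each such corner is a single triangle of $\mathcal{\tilde{T}}$ is where one verifies that the diagonal closing the corner is a Delaunay edge.

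Next I would fill the complement. After removing its two length-$\mathrm{length}(e)/10$ end-subedges, each edge $e$ leaves an interior segment of length $\frac{8}{10}\mathrm{length}(e)\in\left[\frac{8\eta}{10},\frac{16\eta}{10}\right]$, which I subdivide into subedges of length in $\left[\frac{\eta}{10},\frac{2\eta}{10}\right]$; this is always possible and always produces at least one interior subedge (so condition~1 is genuinely invoked for every $e$). The interiors of the triangles are then filled with an $\frac{\eta}{10}$--separated point set, chosen compatibly across shared edges, and the whole configuration is extended to a set $\tilde{S}$ that is maximal $\frac{\eta}{10}$--separated, hence $\frac{\eta}{10}$--Gromov. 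The original vertices, being $\eta$--separated, are \emph{a fortiori} $\frac{\eta}{10}$--separated, so no conflict arises. Chew's Angle Theorem, applied at scale $\frac{\eta}{10}$ to $\tilde{S}$, then gives a Delaunay triangulation with all edges in $\left[\frac{\eta}{10},\frac{2\eta}{10}\right]$ and all angles $\geq\frac{\pi}{6}$; what remains is to check that this triangulation actually subdivides $\mathcal{T}$, i.e.\ that each edge $e$ of $\mathcal{T}$ appears as the union of its subedges in the Delaunay graph of $\tilde{S}$ (Definition~\ref{del grph dfn}).

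This last point is the crux and the step I expect to be the main obstacle; it is precisely what the stability assertion of condition~1 encodes. To show each edge $e=ab$ of $\mathcal{T}$ is $\frac{1}{100}\cdot\frac{\eta}{10}=\frac{\eta}{1000}$--stable in the sense of Definition~\ref{stable edge dfn}, I would produce empty circumdisks carrying a quantitative margin. Every triangle of $\mathcal{T}$ has edges $\leq 2\eta$ and angles $\geq\frac{\pi}{6}$, so its circumradius is at most $\frac{2\eta}{2\sin(\pi/6)}=2\eta$; combined with $\eta$--separation, this forces every competing vertex to sit outside the relevant empty disks by a margin bounded below by a fixed multiple of $\eta$, and the angle bound keeps the configuration a definite distance from cocircularity. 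Making this margin quantitative with the stated constant is the technical heart of the argument. A perturbation $\iota$ with $\left\vert\iota-\mathrm{id}_{S}\right\vert<\frac{\eta}{1000}$ then moves points and disk boundaries by $O\!\left(\frac{\eta}{1000}\right)$, which is smaller than both this margin and the gap to cocircularity, so no empty disk can acquire an interior point and no edge can flip; hence $\iota(a)\iota(b)$ remains in the Delaunay graph of $\iota(S)$, giving condition~1. Finally, the same margin estimate shows that the points added when completing to the maximal set $\tilde{S}$ lie a definite distance from the fine fill and therefore outside the small (radius $\leq\frac{2\eta}{10}$) circumdisks of the subedge-triangles, so every subedge of $e$ survives in the Delaunay graph of $\tilde{S}$ and $\mathcal{\tilde{T}}$ is a genuine $\frac{\eta}{10}$--CDG subdividing $\mathcal{T}$.
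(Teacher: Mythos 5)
Your overall architecture matches the paper's: scale the corner of each triangle at each original vertex by exactly $\frac{1}{10}$ (so the end subedges have length $\mathrm{length}(e)/10$, the corner triangles are similar to the originals, and the diagonal closing each corner is handled by the angle flip test because $e$ was Delaunay), subdivide the interiors of the edges, extend the resulting vertex set to a maximal $\frac{\eta}{10}$--separated set, and take a Delaunay triangulation. But there is a genuine gap exactly at the step you flag as ``the crux,'' and the missing idea is the paper's notion of a \emph{legal} edge. Allowing the interior subedges to have arbitrary length in $\left[ \frac{\eta}{10},\frac{2\eta}{10}\right] $ is too permissive: by Lemma \ref{Graph Lemma}, a segment $ab$ is in the Delaunay graph of $\tilde{S}$ if and only if the closed disk with diameter $ab$ meets no other point of $\tilde{S}$, and once $\left\vert ab\right\vert \geq \sqrt{2}\cdot \frac{\eta}{10}$ that disk contains points at distance $\geq \frac{\eta}{10}$ from both $a$ and $b$, so an $\frac{\eta}{10}$--separated fill point can legally sit in its interior and destroy the edge. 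The paper instead cuts the interior segment into pieces of length in $\left[ \frac{\eta}{10},\left( \sqrt{2}-\frac{1}{10}\right) \frac{\eta}{10}\right) $ (Proposition \ref{edgesub}); Proposition \ref{Guarn prop} then forces every such subedge into the Delaunay graph of \emph{any} $\frac{\eta}{10}$--Gromov set containing its endpoints, with a slack of $\frac{1}{10}\cdot \frac{\eta}{10}$ that immediately yields the $\frac{1}{100}\frac{\eta}{10}$--stability of condition 1.

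Your substitute argument for that stability does not work. The claim that the $\frac{\pi}{6}$ angle bound and the circumradius bound keep the configuration ``a definite distance from cocircularity'' is false: four points of an $\eta$--Gromov set can lie exactly on a common circle (this is why Delaunay triangulations of such sets need not be unique, cf.\ Theorem \ref{Delauny exist and unique them} and Figure \ref{sqrt3}), so Delaunay edges of these configurations can be arbitrarily unstable; controlling that instability is precisely the business of Subsection \ref{stable graph subsect}, and no margin extracted from the coarse triangulation's circumdisks controls the competition coming from the newly added $\frac{\eta}{10}$--separated fill points, which may lie as close as $\frac{\eta}{10}$ to the subedge endpoints. (Note also that the stability established in the paper's proof is that of the legal subedges $\tilde{e}$, as edges on the fine vertex set, via Proposition \ref{Guarn prop} --- not a cocircularity margin for the coarse edge $e$.) Once you replace your interior subdivision by a legal one, everything you leave open follows from Lemma \ref{Graph Lemma}; without it, the assertion that every subedge of $e$ survives in the Delaunay triangulation of $\tilde{S}$ is unproved and, for subedges of length near $\frac{2\eta}{10}$, not guaranteed.
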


In Section \ref{rev sect}, we review some basic facts about Delaunay
triangulations. In Section \ref{chew sect}, we review the proof of Theorem %
\ref{Gromov-Del Prop}. In Section \ref{alm legal sub div sect}, we prove
Theorem \ref{alm leg subdiv thm}, and in Section \ref{epilog section} we
explore various deformations of Theorems \ref{Gromov-Del Prop} and \ref{alm
leg subdiv thm} that we will need to prove Theorem \ref{finiteness}.
Throughout the paper, we let $\mathcal{T}_{k}$ denote the set of $k$%
--simplices of a simplicial complex $\mathcal{T}$. We let $\left\vert 
\mathcal{T}\right\vert $ be the polyhedron determined by $\mathcal{T}.$

\section{ Review of Delaunay Triangulations\label{rev sect}}

\begin{definition}
\emph{(\cite{GaHoff}, Definition 6.8)} The circumdisk of a triangle in $%
\mathbb{R}^{2}$ is the unique disk whose boundary circle passes through the
three vertices of the triangle (see Figure \ref{lawlabel}).

A triangulation $\mathcal{T}$ of a discrete point set $P\subset \mathbb{R}%
^{2}$ is called Delaunay if and only if every circumdisk of every triangle
in $\mathcal{T}$ contains no points of $P$ in its interior.
\end{definition}

The existence of Delaunay triangulations of discrete point sets in $\mathbb{R%
}^{2}$ is guaranteed by the following theorem.

\begin{theorem}
\label{Delauny exist and unique them}\emph{(\cite{GaHoff}, Theorem 6.10,
Lemma 6.16, Figure 6.6c)} Every discrete point set $V\subset \mathbb{R}^{2}$
has a Delaunay triangulation, provided $V$ does not lie on a line. The
Delaunay triangulation of $V$ is unique provided no four points of $V$ lie
on a circle. On the other hand, if four points of $V$ lie on a circle, then $%
V$ has more than one Delaunay triangulation.
\end{theorem}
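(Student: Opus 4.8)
The plan is to pass to the standard \emph{paraboloid lift} $\phi:\mathbb{R}^2\to\mathbb{R}^3$, $\phi(p)=(p,|p|^2)$, which sends the plane onto the paraboloid $z=x^2+y^2$. The single fact that drives the whole argument is the dictionary between emptiness of circumdisks and the geometry of the lifted points: a point $q$ lies strictly inside the circumdisk of a triangle $abc$ if and only if $\phi(q)$ lies strictly below the non-vertical plane $\Pi_{abc}$ through $\phi(a),\phi(b),\phi(c)$, and $q$ lies on the circumcircle if and only if $\phi(q)\in\Pi_{abc}$. I would verify this once by intersecting a plane $z=\langle \ell,p\rangle+c$ with the paraboloid, which cuts out a circle, so that cocircularity of points in the plane corresponds exactly to coplanarity of their lifts. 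Consequently a triangulation $\mathcal{T}$ of $V$ is Delaunay precisely when each lifted triangle is a \emph{lower face} of the convex hull of $\phi(V)$, meaning a face whose supporting plane has all of $\phi(V)$ weakly above it.

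For existence, since $V$ does not lie on a line, $\phi(V)$ does not lie in a plane, so the lower boundary of $\mathrm{conv}\,\phi(V)$ is a genuine two-dimensional polyhedral surface projecting bijectively onto $\mathrm{conv}\,V$. Projecting its faces gives a polygonal subdivision of $\mathrm{conv}\,V$ into empty-circumdisk cells; triangulating each cell arbitrarily produces a triangulation all of whose circumdisks are empty, hence Delaunay. Because $V$ is discrete the construction is locally finite, so the argument applies verbatim when $V$ is infinite, since each bounded region meets only finitely many relevant lower faces — the case we care about for Gromov sets.

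For uniqueness under the hypothesis that no four points are cocircular, the dictionary says no four lifted points are coplanar, so every lower face is a triangle and the lower hull is already a simplicial surface. Conversely, if $\mathcal{T}$ is \emph{any} Delaunay triangulation, then each of its triangles has an empty circumdisk, so its lift is supported by a plane lying strictly below the remaining points of $\phi(V)$ and is therefore a lower face; since lower faces are triangles and tile $\mathrm{conv}\,V$, the triangles of $\mathcal{T}$ must be exactly the projected lower faces. Thus $\mathcal{T}$ is forced, giving uniqueness.

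For the non-uniqueness direction I would take four points $a,b,c,d$ whose common circle is \emph{empty}; their lifts are coplanar and span a lower face projecting to a convex quadrilateral $Q$, which can be split along diagonal $ac$ or diagonal $bd$, each split consisting of empty-circumdisk triangles by the dictionary, so completing either yields two distinct Delaunay triangulations. The point needing the most care is precisely here: four \emph{cocircular} points need not have an \emph{empty} circumcircle — a square together with its center is an example in which the four corners are cocircular yet the Delaunay triangulation is still unique — and such non-empty cocircular quadruples do not produce ambiguity. Hence the honest hypothesis for this direction is that the four points lie on an empty circle, equivalently that they span a common lower face; I would state it in that sharpened form, after which the claim is just the diagonal-flip fact, immediate from the dictionary.
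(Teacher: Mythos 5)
Your lifting-map proof is correct, but it is a genuinely different route from the one the paper takes. The paper does not argue via the paraboloid at all: it quotes the result from G\"artner--Hoffmann and sketches the flip-based proof, in which one starts from an \emph{arbitrary} triangulation of $V$ and repeatedly applies the Lawson flip test, with Lemma \ref{locally Del lemma} (edges passing the test are edges of a Delaunay triangulation) as the key step and the cocircular third bullet of the test as the source of non-uniqueness. That choice is deliberate: the flip machinery (Proposition \ref{two tests are one prop} and later the $\varepsilon$--angle flip test) is exactly what gets reused in Section \ref{epilog section} for almost CDGs, so the sketch doubles as setup for the rest of the paper. Your approach buys two things the flip sketch does not give cheaply: uniqueness is immediate, since the lower hull is canonical and under the no-four-cocircular hypothesis every Delaunay triangulation is forced to coincide with the projected lower faces (the flip argument needs a separate argument that flipping can reach every Delaunay triangulation); and the correct hypothesis for the non-uniqueness clause becomes visible. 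On that last point your criticism is well taken and worth stating plainly: as written, ``if four points of $V$ lie on a circle, then $V$ has more than one Delaunay triangulation'' is false, and your square-plus-center configuration is a genuine counterexample --- the fan from the center is the \emph{only} triangulation of those five points (the diagonals pass through the center, so they cannot be edges), and it is Delaunay --- so the clause must require the four points to lie on an \emph{empty} circle, i.e.\ to span a common lower face, exactly as you reformulate it. One caveat: your justification for the infinite case, that discreteness makes the construction ``locally finite,'' is not literally true. For $V=\left( \mathbb{Z}\times \left\{ 0\right\} \right) \cup \left\{ \left( 0,1\right) \right\} $, infinitely many Delaunay triangles share the vertex $\left( 0,1\right) $, so bounded regions can meet infinitely many lower faces. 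Existence still holds there, and for the $\eta $--Gromov sets this paper actually needs, the issue disappears for a better reason than discreteness: $\eta $--density forces every circumradius to be at most $\eta $ (Proposition \ref{small circumdisk prop}), which does give local finiteness. You should route that step through density rather than discreteness.
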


The proof takes any triangulation $\mathcal{T}$ of $V$ and then performs a
sequence of edge replacements to $\mathcal{T}$ described as follows (see
Figure \ref{lawlabel}). If $\Delta _{1}=\Delta apb$ and $\Delta _{2}=\Delta
axb$ are two triangles of $\mathcal{T}$ that share a common edge $e=ab$, the
following test is applied to $e$ to determine if it should be replaced:

\noindent \textbf{Lawson Flip Test:} \emph{Let $D_{\Delta _{1}}$ be the
circumdisk of $\Delta _{1}.$ }

\begin{itemize}
\item If $x\in \mathrm{int}D_{\Delta _{1}},$ then replace $e=ab$ with $%
\tilde{e}=px$.

\item If $x\notin D_{\Delta _{1}},$ then do not replace $e=ab$.

\item If $x\in \partial D_{\Delta _{1}},$ then either $e=ab$ or $\tilde{e}%
=px $ are acceptable.
\end{itemize}

This algorithm produces a Delaunay triangulation of $\mathcal{T}$. In fact,

\begin{lemma}
\label{locally Del lemma}\emph{(See e.g., Proposition 6.13 in \cite{GaHoff}.)%
} Let $e$ be an edge of a triangulation $\mathcal{T}$ in $\mathbb{R}^{2}.$
If $e$ passes the Lawson flip test, then $e$ is an edge of a Delaunay
triangulation of $\mathcal{T}_{0}.$
\end{lemma}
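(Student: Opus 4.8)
The plan is to prove the statement through the classical \emph{empty circumdisk characterization} of Delaunay edges: a segment $ab$ between two points of a discrete set $P\subset\mathbb{R}^2$ is an edge of some Delaunay triangulation of $P$ if and only if there is an open disk $D$ with $a,b\in\partial D$ whose interior contains no point of $P$. (This edge characterization is standard; it can be read off from the lifting map $u\mapsto(u,|u|^2)$, under which Delaunay edges are exactly the edges of the lower convex hull of the lifted points, or quoted directly from \cite{GaHoff}.) Writing $P=\mathcal{T}_0$, $e=ab$, and letting $\Delta_1=\Delta apb$ and $\Delta_2=\Delta axb$ be the two triangles of $\mathcal{T}$ incident to $e$, it therefore suffices to produce a single empty open disk through $a$ and $b$.

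To find such a disk I would sweep through the pencil of disks having $a$ and $b$ on their boundary. Each such disk has center on the perpendicular bisector of $ab$, so the pencil is parametrized by a single real coordinate $t$ along that bisector. For a fixed point $q\notin ab$, the set of parameters $t$ for which the corresponding disk contains $q$ is a half-line in $t$, whose direction is determined by the side of the line $ab$ on which $q$ lies. Consequently the parameters giving disks empty of $P\setminus\{a,b\}$ form a closed interval $[T^-,T^+]$, where $T^+$ is controlled by the vertices on the $p$--side of $ab$ and $T^-$ by those on the $x$--side. The circumdisk $D_{\Delta_1}$ appears at a specific parameter $t_1$, at which $p$ lies on $\partial D_{\Delta_1}$, and the Lawson flip hypothesis $x\notin\mathrm{int}\,D_{\Delta_1}$ records that at $t_1$ the opposite vertex $x$ is already excluded. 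Starting from $D_{\Delta_1}$ and shrinking the disk toward the segment $ab$ from the $p$--side (decreasing the radius while keeping $a,b$ on the boundary) then drives $t$ monotonically toward the parameters that exclude the $p$--side vertices, pivoting onto a new boundary vertex each time the shrinking boundary first meets one.

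The main obstacle is to show that this interval $[T^-,T^+]$ is nonempty, i.e.\ that the disk produced by the sweep is empty of \emph{every} vertex of $P$ and not merely of the single opposite vertex $x$ supplied by the flip test. This is the delicate heart of the argument, because the Lawson flip test only constrains the one vertex opposite $e$ across $\Delta_2$: a vertex lying inside $D_{\Delta_1}$ but outside the triangle $\Delta_1$ is invisible to the test yet can still narrow the empty interval. To close this gap I would exploit the two pieces of local structure that the flip test itself does not see, namely that $\Delta_1$ and $\Delta_2$ are genuine faces of $\mathcal{T}$ and hence contain no vertex of $P$ in their interiors, together with the monotone dependence of disk-containment on $t$, which lets me track how the nearest obstructing vertex on each side of $ab$ bounds $T^+$ from above and $T^-$ from below. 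Verifying that these bounds are compatible, so that the pivoting sweep terminates on a disk that is empty on both sides of $ab$ simultaneously, is the step I expect to require the most care, and it is exactly the point at which the incidence of $e$ within a bona fide triangulation, rather than the isolated quadrilateral $apbx$, must be brought to bear.
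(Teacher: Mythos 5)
Your reduction to the empty--disk characterization and your sweep through the pencil of disks with $a,b$ on the boundary are both set up correctly, and you have isolated exactly the right difficulty: the Lawson flip test constrains only the single vertex $x$ opposite $e$, while nonemptiness of your interval $[T^-,T^+]$ requires control over \emph{every} vertex of $\mathcal{T}_0$. The problem is that the step you defer cannot be carried out: the local structure you plan to invoke (that $\Delta_1,\Delta_2$ are faces of $\mathcal{T}$, plus monotonicity in $t$) does not suffice, because the single--edge statement is false with only that input. Concretely, take
\begin{equation*}
a=(0,0),\quad b=(1,0),\quad p=(0.5,1),\quad x=(1.2,-0.3),\quad x'=(0.3,-0.2),
\end{equation*}
and let $\mathcal{T}$ consist of the four triangles $\Delta apb$, $\Delta axb$, $\Delta ax'x$, $\Delta xbp$. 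One checks these meet edge--to--edge, have pairwise disjoint interiors, and tile the convex hull (the quadrilateral $p\,a\,x'x$, with $b$ an interior vertex), so $\mathcal{T}$ is a genuine triangulation of $\mathcal{T}_0=\{a,b,p,x,x'\}$. The circumdisk $D_{\Delta apb}$ has center $(0.5,0.375)$ and radius $0.625$, and $\left\vert x-(0.5,0.375)\right\vert^{2}=0.945625>0.625^{2}$, so $e=ab$ passes the Lawson flip test. Yet in your parametrization, the disk through $a,b$ with center $(0.5,t)$ omits $p$ from its interior if and only if $t\leq 0.375$, and omits $x'$ if and only if $t\geq 0.425$; hence \emph{every} disk through $a$ and $b$ contains $p$ or $x'$ in its interior, and $ab$ is an edge of no Delaunay triangulation of $\mathcal{T}_0$ (indeed $\Delta apx'$ has an empty circumdisk, and its edge $px'$ crosses $ab$ at $(1/3,0)$). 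The vertex $x'$ lies inside $D_{\Delta apb}$, below the line $ab$, but outside the face $\Delta axb$ --- precisely the ``invisible'' configuration you flagged, and it is perfectly compatible with $\Delta_1$ and $\Delta_2$ being faces of a triangulation.

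For the comparison you were asked to make: the paper itself offers no proof of this lemma, only the citation to \cite{GaHoff}, and the statement that is actually provable (and standard) is global, not local: if \emph{every} edge of $\mathcal{T}$ passes the Lawson flip test, then $\mathcal{T}$ is itself a Delaunay triangulation of $\mathcal{T}_0$, so in particular each of its edges lies in a Delaunay triangulation. The global hypothesis is exactly what excludes the obstruction above: in the example, the edge $ax$ shared by $\Delta axb$ and $\Delta ax'x$ fails the test, since $x'\in \mathrm{int}\,D_{\Delta axb}$. Note also that ``passes the flip test'' is not a property of $e$ and $\mathcal{T}_0$ alone --- it depends on the ambient triangulation and is destroyed by flips performed elsewhere: after flipping $ax$ to $x'b$, the edge $ab$ is retested against its new neighbor $x'$ and fails. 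This is why the standard proof of the global statement is not a one--edge disk sweep but a genuinely global argument (for instance via the lifting map $u\mapsto (u,|u|^{2})$, under which ``all edges locally Delaunay'' means the lifted surface is locally convex, hence equal to the lower convex hull). So the honest repair of your proposal is to strengthen the hypothesis to ``all edges of $\mathcal{T}$ pass the test'' and then reproduce that global argument; no refinement of the local sweep can close the gap you identified.
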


For our purposes, it will be convenient to use an alternative to the Lawson
Flip Test which we call the\newline

\noindent \textbf{Angle Flip Test: }\emph{Let } $Q=apbx$\emph{\ be the
quadrilateral formed by }$\Delta _{1},$\emph{\ and }$\Delta _{2}$. \emph{\
Let $\sphericalangle a$, $\sphericalangle p$, $\sphericalangle b$, and $%
\sphericalangle x$, denote the angles of $Q$ at $a,p,b$, and $x$,
respectively and note} $\sphericalangle a+\sphericalangle p+\sphericalangle
b+\sphericalangle x=2\pi $.

\begin{itemize}
\item If $\sphericalangle p+\sphericalangle x>\pi $, then replace $e=ab$
with $\tilde{e}=px.$

\item If $\sphericalangle p+\sphericalangle x<\pi $, then do not replace $%
e=ab$.

\item If $\sphericalangle p+\sphericalangle x=\sphericalangle
a+\sphericalangle b=\pi $, then either $e=ab$ or $\tilde{e}=px$ is
acceptable.
\end{itemize}

To see that these tests are equivalent recall

\begin{theorem}
\emph{(Thale's Theorem, page 194, \cite{de berg})} Let $C$ be a circle in $%
\mathbb{R}^{2}$ that contains the points $a,p,b,$ and $q.$ Suppose that the
four points $b,q,$ $r,$ and $s,$ lie on the same side of $ap$ and that $s$
is outside of $C$ and that $r$ is inside of $C.$ Then 
\begin{equation*}
\sphericalangle \left( a,r,p\right) >\sphericalangle \left( a,b,p\right)
=\sphericalangle \left( a,q,p\right) >\sphericalangle \left( a,s,p\right) .
\end{equation*}
\end{theorem}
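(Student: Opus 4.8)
The plan is to split the assertion into the single equality $\sphericalangle(a,b,p) = \sphericalangle(a,q,p)$ and the two strict inequalities, reducing everything to inscribed angles and circumcircle comparison. Let $O$ and $R$ denote the center and radius of $C$, and let $H$ be the open half-plane bounded by the line $ap$ that contains $b,q,r,s$. The equality is the inscribed angle theorem: the points $b$ and $q$ lie on $C$ in the same half-plane $H$, hence on the same one of the two arcs into which the chord $ap$ divides $C$, so both $\sphericalangle(a,b,p)$ and $\sphericalangle(a,q,p)$ equal a common value $\theta_0$ attached to that arc. I would recover this from the extended law of sines $|ap| = 2R\sin\sphericalangle(a,x,p)$, valid for every $x\in C$, which pins the angle to a single value on each of the two arcs cut off by $ap$.

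For the inequalities the uniform device is to compare circumcircles. Given any point $x\in H$, let $C_x$ be the circumcircle of the triangle $a\,x\,p$; then $\sphericalangle(a,x,p)$ is the inscribed angle subtended by the chord $ap$ in $C_x$, controlled by the radius of $C_x$ via the law of sines, with the arc of $C_x$ lying in $H$ selecting which of the two supplementary values occurs. Since $C$ and $C_x$ both pass through $a$ and $p$, they meet only at $a$ and $p$, so their $H$--arcs are disjoint except at the endpoints and are therefore nested. When $r$ is interior to $C$, the point $r$ lies in the region of $H$ between the chord $ap$ and the arc $C\cap H$, so the arc $C_r\cap H$ is the inner of the two arcs; since the inscribed angle increases as the $H$--arc moves toward the chord, this gives $\sphericalangle(a,r,p) > \theta_0$. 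When $s$ is exterior to $C$, the arc $C_s\cap H$ is the outer arc and the same monotonicity gives $\sphericalangle(a,s,p) < \theta_0$, completing the chain.

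The step that needs genuine care is the monotonicity hidden in the previous paragraph: that among circles through $a$ and $p$, the inscribed angle subtended by $ap$ on the side $H$ strictly decreases as the $H$--arc bulges farther from the chord. I would establish this from the half-central-angle relation by checking that, as the center moves along the perpendicular bisector of $ap$, the corresponding $H$--arc sweeps monotonically across $H$, so that ``closer to the chord'' corresponds unambiguously to ``larger inscribed angle,'' and that the disk bounded by $C$ meets $H$ exactly in the region between the segment $ap$ and the arc $C\cap H$. An alternative, more elementary route to each inequality is the exterior angle theorem applied to an auxiliary triangle built from a second intersection of a line through $x$ with $C$; this is immediate for the interior point $r$ (the ray $ar$ re-meets $C$ on the correct arc), but for an exterior point $s$ lying beyond both tangent lines to $C$ at $a$ and $p$ neither segment $sa$ nor $sp$ re-enters the disk, so that argument requires an extra case and the circumcircle comparison is cleaner overall.
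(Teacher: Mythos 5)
The paper does not prove this statement at all: it is quoted verbatim as a known result, attributed to page 194 of de Berg et al., and is used only as a black box to show that the Lawson flip test and the angle flip test agree. So there is no proof in the paper to compare yours against, and the relevant question is simply whether your argument stands on its own. It does. The equality $\sphericalangle(a,b,p)=\sphericalangle(a,q,p)$ is the inscribed angle theorem for two points on the same arc cut off by the chord $ap$, exactly as you say. Your treatment of the two strict inequalities via the pencil of circles through $a$ and $p$ is sound: distinct circles through $a$ and $p$ meet only there, their $H$--arcs foliate $H$ and are nested, the open disk bounded by $C$ meets $H$ precisely in the lens between the segment $ap$ and the arc $C\cap H$ (because the disk meets the line $ap$ exactly in the segment $[a,p]$), and parametrizing centers by signed height $t$ on the perpendicular bisector gives apex height $t+\sqrt{t^{2}+|ap|^{2}/4}$ and inscribed angle $2\arctan\bigl(|ap|/(2t+\sqrt{4t^{2}+|ap|^{2}})\bigr)$, both strictly monotone in $t$, which is exactly the monotonicity you flagged as the step needing care. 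You are also right about the alternative route: the exterior angle argument is immediate for the interior point $r$ (the ray $ar$ must exit the disk through the $H$--arc), but for an exterior $s$ beyond both tangent lines at $a$ and $p$ neither segment re-enters the disk, which is presumably why the textbook treatment, and your cleaner uniform one, prefer the circumcircle comparison. The only cosmetic caveat is that the extended law of sines alone determines the angle only up to the substitution $\theta\mapsto\pi-\theta$; you need the connectedness of each arc (or the inscribed angle theorem directly) to pin down a single value per arc, but that is a one-line remark, not a gap.
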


Let $Q=apbx$ be a quadrilateral in $\mathbb{R}^{2}$ whose vertices $a,p,b,x$
are listed in counterclockwise order. If $Q$ is a rectangle then all four
vertices lie on a circle $C$, and 
\begin{equation}
\sphericalangle a+\sphericalangle b=\sphericalangle p+\sphericalangle x=\pi .
\label{on circle}
\end{equation}

If $Q$ is perturbed in a way so that the points $a$ and $b$ are fixed and $%
x,p\in C$, then by Euclid's Central Angle Theorem, the angles $%
\sphericalangle x$ and $\sphericalangle p$ remain constant. So (\ref{on
circle}) holds if and only if $Q$ is inscribed in a circle $C$.

Again fix three points $a,p,$ and $b$ on $C.$ If $x$ is outside of $C,$ then
by Thale's theorem $\sphericalangle x$ is smaller than it is in (\ref{on
circle}), so $\sphericalangle p+\sphericalangle x<\pi $. Conversely, if $x$
is inside $C$, $\sphericalangle p+\sphericalangle x>\pi $. Thus we have
proven

\begin{proposition}
\label{two tests are one prop}Let $e$ be an edge of a triangulation $%
\mathcal{T}$ in $\mathbb{R}^{2}.$ $e$ passes the Lawson flip test if and
only if $e$ passes the angle flip test.
\end{proposition}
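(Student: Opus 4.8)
The plan is to show that both tests decide the fate of $e=ab$ solely according to the position of the fourth vertex $x$ relative to the circle $C=\partial D_{\Delta_{1}}$, the circumcircle of $\Delta_{1}=\Delta apb$, and that the three possible positions of $x$ (inside, on, or outside $C$) line up verdict-for-verdict in the two tests. Since $C$ passes through $a,p,b$ by definition of the circumdisk, the Lawson flip test is already phrased directly in terms of whether $x\in\mathrm{int}\,D_{\Delta_{1}}$, $x\in\partial D_{\Delta_{1}}$, or $x\notin D_{\Delta_{1}}$. So everything reduces to translating these three positions into the sign of $\sphericalangle p+\sphericalangle x-\pi$.

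First I would record the on-circle identity. Writing $Q=apbx$ with vertices in counterclockwise order, the diagonal $ab$ is a chord separating $p$ from $x$, and $\sphericalangle p=\sphericalangle\left(a,p,b\right)$, $\sphericalangle x=\sphericalangle\left(a,x,b\right)$ are the angles subtending this chord from the two sides. When $x\in C$ the quadrilateral is inscribed, so by Euclid's Central Angle Theorem, equivalently \eqref{on circle}, opposite angles are supplementary:
\[
\sphericalangle p+\sphericalangle x=\pi .
\]
This settles the boundary case: $x\in\partial D_{\Delta_{1}}$ if and only if $\sphericalangle p+\sphericalangle x=\pi$, so the two tests agree that either choice of diagonal is acceptable.

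Next I would treat the strict cases by fixing $a,p,b$, hence $C$ and $\sphericalangle p$, and varying $x$ on the far side of $ab$. Applying Thale's Theorem to the chord $ab$, that is, relabeling the statement's chord $ap$ as $ab$ and taking the on-arc point together with $x$ on the common far side of $ab$, the angle $\sphericalangle x=\sphericalangle\left(a,x,b\right)$ strictly increases as $x$ moves from outside $C$, to $C$, to inside $C$. Combined with the displayed identity this gives the trichotomy $x\notin D_{\Delta_{1}}\iff\sphericalangle p+\sphericalangle x<\pi$ and $x\in\mathrm{int}\,D_{\Delta_{1}}\iff\sphericalangle p+\sphericalangle x>\pi$. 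Comparing with the two test definitions, $x$ outside $C$ corresponds to ``do not replace'' in both tests, and $x$ inside $C$ corresponds to ``replace $e=ab$ by $\tilde{e}=px$'' in both. Having matched all three cases, the two tests return identical verdicts, so $e$ passes one if and only if it passes the other.

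I expect the only delicate point to be the correct invocation of Thale's Theorem: as stated it compares angles subtended from points on one fixed side of a chord, whereas $p$ and $x$ sit on opposite sides of the diagonal $ab$. The clean way around this is to apply Thale on the far side of $ab$, using the second arc of $C$, and to absorb the side change into the supplementary-angle identity above, rather than trying to compare $\sphericalangle p$ and $\sphericalangle x$ directly.
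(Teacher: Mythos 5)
Your argument is correct and follows essentially the same route as the paper: establish that an inscribed quadrilateral has opposite angles summing to $\pi$ via the Central Angle Theorem, then use Thale's Theorem (applied to the chord $ab$ with the varying vertex $x$ on the far side) to get the strict inequalities when $x$ lies inside or outside the circumcircle. The delicate point you flag about the sidedness in Thale's Theorem is handled the same way in the paper, so there is nothing further to add.
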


\begin{definition}
An edge $e$ of a triangulation $\mathcal{T}$ will be called a \textbf{%
Delaunay edge} provided $e$ is not replaced by the Angle Flip Test. A
triangle $\Delta $ in $\mathcal{T}$ is called a \textbf{Delaunay triangle}
provided each of its edges are Delaunay.
\end{definition}

\section{Chew's Angle Theorem \label{chew sect}}

To prove Theorem \ref{Gromov-Del Prop}, it suffices to consider the case
when $\mathcal{T}$ is a maximal CDG. This, together with the fact that all
edges of an $\eta $--\textrm{CDG }have length $\geq \eta $, means that
Theorem \ref{Gromov-Del Prop} follows from the following result.

\begin{theorem}
\label{jacked up Chew}Let $\mathcal{T}_{0}$ be a subset of $\mathbb{R}^{2}$
that is $\eta $--dense, and let $\mathcal{T}$ be a Delaunay triangulation of 
$\mathcal{T}_{0}.$ Then all side lengths of $\mathcal{T}$ are $\leq 2\eta ,$
and any triangle of $\mathcal{T}$ whose side lengths are $\geq \eta $ has
angles $\geq \frac{\pi }{6}.$
\end{theorem}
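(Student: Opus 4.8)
The plan is to establish the two claims of Theorem~\ref{jacked up Chew} separately, using the $\eta$--density of $\mathcal{T}_0$ together with the defining circumdisk property of Delaunay triangulations. Recall that $\mathcal{T}_0$ being $\eta$--dense means every point of $\mathbb{R}^2$ lies within distance $\eta$ of some point of $\mathcal{T}_0$, equivalently no open disk of radius $\eta$ is empty of points of $\mathcal{T}_0$.

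\emph{Bounding side lengths by $2\eta$.} First I would show that every edge $e=ab$ of a Delaunay triangle $\Delta$ has length $\leq 2\eta$. The key observation is that for a Delaunay triangle, the circumdisk $D_\Delta$ contains no points of $\mathcal{T}_0$ in its interior. Consider the midpoint $m$ of the edge $ab$. By $\eta$--density, there is a point $q\in\mathcal{T}_0$ with $\dist(m,q)<\eta$. If $\mathrm{length}(e)>2\eta$, then the open disk of radius $\eta$ centered at $m$ is contained in the open circumdisk $D_\Delta$ (since $a,b$ lie on $\partial D_\Delta$ and the circumradius is at least half the longest side, so at least $\mathrm{length}(e)/2>\eta$, forcing the $\eta$--ball about $m$ inside $D_\Delta$); this places $q$ in the interior of $D_\Delta$, contradicting the Delaunay property. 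The mild technical point to verify carefully is the geometric containment of the $\eta$--ball about $m$ inside $D_\Delta$, which follows because $m$ lies on the chord $ab$ and the distance from $m$ to $\partial D_\Delta$ along any direction is at least the distance from the chord to the far boundary, bounded below by the sagitta computation; I would make this precise by noting that the closest boundary point to $m$ is at distance at least $\mathrm{length}(e)/2$ when $m$ is the midpoint of a chord. This gives $\mathrm{length}(e)\leq 2\eta$ for all Delaunay edges, hence for all sides of $\mathcal{T}$.

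\emph{Angle lower bound.} For the angle estimate, let $\Delta$ be a triangle of $\mathcal{T}$ all of whose sides have length $\geq\eta$, and let $\alpha$ be its smallest angle, opposite the shortest side $a$. By the law of sines, $a=2R\sin\alpha$ where $R$ is the circumradius. Since the circumdisk is empty of points of $\mathcal{T}_0$ other than the three vertices, $\eta$--density forces $R\leq\eta$: indeed if $R>\eta$ the open $\eta$--ball about the circumcenter lies inside the interior of $D_\Delta$ and must contain some $q\in\mathcal{T}_0$, contradicting Delaunayness (one must check the circumcenter lies appropriately so this ball avoids the three vertices, which holds when $R>\eta$). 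Combining $a\geq\eta$ with $a=2R\sin\alpha\leq 2\eta\sin\alpha$ yields $\sin\alpha\geq\tfrac12$, hence $\alpha\geq\pi/6$.

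The main obstacle I anticipate is handling the boundary cases where the circumcenter does not lie inside the triangle (obtuse triangles), so that the $\eta$--ball about the circumcenter might not sit cleanly in the empty circumdisk, and ensuring in both arguments that the competing point $q$ produced by density is genuinely in the \emph{interior} of $D_\Delta$ and distinct from the vertices $a,b,c$ of $\Delta$. I would address this by choosing the center of the competing $\eta$--ball to be strictly interior to $D_\Delta$ and at distance $>\eta$ from the three vertices (for instance perturbing slightly inward from the midpoint or circumcenter), so that the density point it captures cannot coincide with a vertex and must therefore violate the empty-circumdisk condition. This reduces both statements to elementary Euclidean estimates on distances from a chord midpoint, respectively the circumcenter, to the boundary circle.
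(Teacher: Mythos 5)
Your angle argument is sound and matches the paper's: density plus the empty‑circumdisk property force the circumradius $R\leq\eta$ (the circumcenter is at distance exactly $R$ from the three vertices and at least $R$ from every other point of $\mathcal{T}_0$, so $R>\eta$ would leave it more than $\eta$ from all of $\mathcal{T}_0$), and then $\eta\leq a=2R\sin\alpha\leq 2\eta\sin\alpha$ gives $\alpha\geq\pi/6$ for the smallest angle. The gap is in your proof of the edge bound $\mathrm{length}(e)\leq 2\eta$, which rests on a false geometric claim. You assert that if $L:=\mathrm{length}(e)>2\eta$ then the open $\eta$--ball about the midpoint $m$ of the chord $e$ lies inside $D_\Delta$, justified by the statement that the closest boundary point to $m$ is at distance at least $L/2$. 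That distance is in fact $R-\sqrt{R^{2}-L^{2}/4}$, which is always at most $L/2$, with equality only when $e$ is a diameter; for a unit circle and a chord of length $1$ it is $1-\sqrt{3}/2\approx 0.13$, far less than $1/2$. Concretely, with $R=2\eta$ and $L=2.1\eta$ the $\eta$--ball about $m$ reaches distance about $2.7\eta>R$ from the center, so it is not contained in $D_\Delta$, the density point it captures need not lie in the interior of the circumdisk, and no contradiction is obtained. The containment you need, $\sqrt{R^{2}-L^{2}/4}+\eta\leq R$, is equivalent to $L^{2}\geq 4\eta\left(2R-\eta\right)$, which does not follow from $L>2\eta$ alone.

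The repair is immediate and is exactly what the paper does in its Proposition \ref{small circumdisk prop}: prove first that every circumdisk has radius $R\leq\eta$ --- the same circumcenter argument you already use for the angles --- and then observe that every edge is a chord of a disk of radius at most $\eta$, hence has length at most $2R\leq 2\eta$. In other words, run your two steps in the opposite order and center the competing $\eta$--ball at the circumcenter rather than at the chord midpoint. Your worry about obtuse triangles is harmless here: the argument never needs the circumcenter to lie inside the triangle, only that the open circumdisk misses $\mathcal{T}_0$ and that the vertices of $\Delta$ are at distance exactly $R>\eta$ from the center, which also rules out the captured density point being a vertex.
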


We begin the proof with

\begin{proposition}
\label{small circumdisk prop}Let $\mathcal{T}_{0}$ be a subset of $\mathbb{R}%
^{2}$ that is $\eta $--dense, and let $\mathcal{T}$ be a Delaunay
triangulation of $\mathcal{T}_{0}.$ If $\Delta $ is a triangle in $\mathcal{T%
}$, and $D_{\Delta }$ is its circumdisk, then the radius of $D_{\Delta }$ is 
$\leq \eta $.
\end{proposition}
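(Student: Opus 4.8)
The plan is to argue by contradiction, using the defining empty-circumdisk property of a Delaunay triangulation together with $\eta$--density applied to a single, well-chosen point: the circumcenter of $\Delta$.

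First I would set up notation. Let $c$ denote the circumcenter of $\Delta$ and let $r$ be its circumradius, so that the three vertices of $\Delta$ lie on $\partial D_{\Delta }$ at distance exactly $r$ from $c$, and the interior of the circumdisk is
\begin{equation*}
\mathrm{int}\,D_{\Delta }=\left\{ x\in \mathbb{R}^{2}:\dist \left( x,c\right) <r\right\} .
\end{equation*}
Because $c$ is an honest point of $\mathbb{R}^{2}$, the $\eta $--density of $\mathcal{T}_{0}$ applies to it directly: there is a point $p\in \mathcal{T}_{0}$ with $\dist \left( c,p\right) \leq \eta $.

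Next I would suppose, toward a contradiction, that $r>\eta $. Then $\dist \left( c,p\right) \leq \eta <r$, so $p$ lies in the interior $\mathrm{int}\,D_{\Delta }$. On the other hand, since $\mathcal{T}$ is a Delaunay triangulation of $\mathcal{T}_{0}$, by definition the circumdisk of the triangle $\Delta \in \mathcal{T}$ contains no point of $\mathcal{T}_{0}$ in its interior. This contradicts $p\in \mathcal{T}_{0}\cap \mathrm{int}\,D_{\Delta }$, and I conclude $r\leq \eta $.

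There is essentially no hard step here; the entire content is the pairing of the empty-interior property with density at the center. The only point requiring a moment's care is the precise reading of ``$\eta $--dense'' (every point of $\mathbb{R}^{2}$ lies within distance $\eta $ of $\mathcal{T}_{0}$) and the role of the strict versus non-strict inequality. Since the interior of the circumdisk is the set of points at distance strictly less than $r$ from $c$, the bound $\dist \left( c,p\right) \leq \eta <r$ already places $p$ in the interior, so the argument goes through regardless of whether density is stated with a strict or non-strict inequality. This is the step I would double-check, but I expect it to present no genuine obstacle.
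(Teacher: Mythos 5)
Your argument is correct and is essentially the same as the paper's: both assume $r>\eta$, apply $\eta$--density to the circumcenter to produce a point of $\mathcal{T}_{0}$ at distance $\leq \eta <r$ from the center, and contradict the empty-interior property of the circumdisk. Your remark on strict versus non-strict inequalities is a fair point of care, and your reading resolves it correctly.
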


\begin{proof}
Let $v$ be a vertex of $\Delta $. Let $r$ be the radius of $D_{\Delta },$
and assume that $r>\eta $. Let $x$ be the center of $D_{\Delta }.$ Since $%
\mathcal{T}$ is Delaunay, it has no vertices in the interior of $D_{\Delta
}. $ Hence the distance from $x$ to all vertices of $\mathcal{T}$ is $>\eta
. $ This contradicts the hypothesis that the vertices of $\mathcal{T}$ are $%
\eta $--dense$.$ So the radius $r$ of $D_{\Delta }$ is $\leq \eta $, as
claimed.
\end{proof}

We are ready to prove the first statement of Theorem \ref{jacked up Chew},
which is the content of the next result.

\begin{proposition}
Let $\mathcal{T}_{0}$ be a subset of $\mathbb{R}^{2}$ that is $\eta $%
--dense, and let $\mathcal{T}$ be a Delaunay triangulation of $\mathcal{T}%
_{0}.$ Then all side lengths of $\mathcal{T}$ are $\leq 2\eta .$

In particular, if $\mathcal{T}$ is a maximal $\eta $--CDG, then all edges of 
$\mathcal{T}$ have length in the interval $[\eta ,2\eta ]$.
\end{proposition}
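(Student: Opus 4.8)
The plan is to read off the upper bound directly from Proposition \ref{small circumdisk prop} and then to settle the ``in particular'' clause by unpacking what a maximal $\eta$--CDG is. First, to bound the side lengths by $2\eta$, I would fix an arbitrary edge $e$ of $\mathcal{T}$. Since $\mathcal{T}$ is a triangulation, $e$ is a side of at least one triangle $\Delta\in\mathcal{T}$, so both endpoints of $e$ are vertices of $\Delta$ and hence lie on the boundary circle of its circumdisk $D_{\Delta}$. Thus $e$ is a chord of $D_{\Delta}$, and any chord of a disk has length at most the diameter, giving $\mathrm{length}(e)\leq 2r$, where $r$ is the radius of $D_{\Delta}$. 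By Proposition \ref{small circumdisk prop}, $r\leq\eta$, so $\mathrm{length}(e)\leq 2\eta$. As $e$ was arbitrary, all side lengths of $\mathcal{T}$ are $\leq 2\eta$.

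For the ``in particular'' statement, I would recall that a maximal $\eta$--CDG is, by definition, the full Delaunay triangulation $\mathcal{\hat{T}}$ of an $\eta$--Gromov subset $Y\subset\mathbb{R}^{2}$, whose vertex set is $\mathcal{T}_{0}=Y$. Two features of $Y$ drive the conclusion. On the one hand, maximality of $Y$ with respect to $\eta$--separation forces $Y$ to be $\eta$--dense: if some $x\in\mathbb{R}^{2}$ had distance $\geq\eta$ to every point of $Y$, then $Y\cup\{x\}$ would be a strictly larger $\eta$--separated set, contradicting maximality. Hence the first part of the proposition applies and delivers the upper bound $2\eta$. On the other hand, $\eta$--separation means any two distinct points of $Y$ are at distance $\geq\eta$; since every edge of $\mathcal{\hat{T}}$ joins two distinct vertices of $Y$, every edge has length $\geq\eta$. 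Combining the two estimates places all edges of $\mathcal{T}$ in the interval $[\eta,2\eta]$.

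I expect no serious analytic obstacle here: once Proposition \ref{small circumdisk prop} is in hand, the argument is essentially bookkeeping. The only point that requires care is the translation of the Gromov maximality condition into genuine $\eta$--density, that is, verifying that ``$Y$ cannot be enlarged while remaining $\eta$--separated'' is exactly the density hypothesis needed to invoke the upper-bound statement. Once that equivalence is recorded, both inequalities follow at once, the lower bound from $\eta$--separation and the upper bound from the chord/circumdisk estimate.
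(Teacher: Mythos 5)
Your proof is correct and follows essentially the same route as the paper: the upper bound comes from viewing each edge as a chord of a circumdisk of radius $\leq \eta$ (Proposition \ref{small circumdisk prop}), and the lower bound comes from the $\eta$--separation of the Gromov vertex set. Your explicit check that maximality of an $\eta$--separated set forces $\eta$--density is a detail the paper leaves implicit, but it is the same argument.
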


\begin{proof}
Assume $e$ is an edge of $\Delta $. Since $D_{\Delta }$ has radius $\leq
\eta $, and $e$ is a chord of $D_{\Delta }$, by the triangle inequality, $%
|e|\leq 2\eta $.

If $\mathcal{T}$ is a maximal $\eta $--CDG, then the vertices of $\mathcal{T}
$ are $\eta $--separated, we also have $\eta \leq |e|$.
\end{proof}

To complete the proof of Theorem \ref{jacked up Chew}, we show

\begin{proposition}
Let $\mathcal{T}_{0}$ be a subset of $\mathbb{R}^{2}$ that is $\eta $%
--dense, and let $\mathcal{T}$ be a Delaunay triangulation of $\mathcal{T}%
_{0}.$ If $\Delta =\Delta abc$ is a triangle in $\mathcal{T}$ whose edge
lengths are in $[\eta ,2\eta ]$, then all angles of $\Delta $ are $\geq \pi
/6.$
\end{proposition}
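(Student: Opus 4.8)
The plan is to combine the circumdisk radius bound from Proposition \ref{small circumdisk prop} with the Law of Sines. Let $\Delta = \Delta abc$ be a triangle whose edge lengths all lie in $[\eta, 2\eta]$, and let $r$ be the circumradius of its circumdisk $D_{\Delta}$. By Proposition \ref{small circumdisk prop}, $r \leq \eta$. The Law of Sines in the form $\frac{|bc|}{\sin \sphericalangle a} = 2r$ relates each angle to the opposite side and the circumradius, so I can solve for $\sin$ of each angle directly.

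First I would fix an arbitrary angle, say $\sphericalangle a$, whose opposite side $bc$ has length in $[\eta, 2\eta]$. The Law of Sines gives
\begin{equation*}
\sin \sphericalangle a = \frac{|bc|}{2r} \geq \frac{\eta}{2\eta} = \frac{1}{2}.
\end{equation*}
The same bound holds for $\sphericalangle b$ and $\sphericalangle c$ by symmetry, since each opposite side has length $\geq \eta$ and the common circumradius satisfies $r \leq \eta$. Thus $\sin$ of every angle of $\Delta$ is at least $\frac{1}{2}$.

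The step requiring genuine care is passing from the sine bound back to the angle bound, since $\sin \theta \geq \frac{1}{2}$ is satisfied both by $\theta \geq \frac{\pi}{6}$ and by $\theta \geq \frac{5\pi}{6}$; one must rule out the possibility that an angle is \emph{obtuse} and close to $\pi$. Here I would use the fact that at most one angle of a triangle can exceed $\frac{\pi}{2}$. If $\sphericalangle a$ is the unique (possibly) obtuse angle, then the other two angles $\sphericalangle b, \sphericalangle c$ are acute, so the bound $\sin \geq \frac{1}{2}$ forces $\sphericalangle b, \sphericalangle c \geq \frac{\pi}{6}$. Then $\sphericalangle a = \pi - \sphericalangle b - \sphericalangle c \leq \pi - \frac{\pi}{3} = \frac{2\pi}{3}$, and since $\frac{2\pi}{3} < \frac{5\pi}{6}$, the obtuse branch of the inverse sine is excluded, giving $\sphericalangle a \geq \frac{\pi}{6}$ as well.

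I expect the only real obstacle to be this obtuse-angle ambiguity; the Law of Sines computation itself is routine once the circumradius bound is in hand. An alternative route that sidesteps the inverse-sine branching entirely is to apply the Law of Sines to the two acute angles first, establish each is $\geq \frac{\pi}{6}$, and then bound the third angle purely by the angle sum $\sphericalangle a + \sphericalangle b + \sphericalangle c = \pi$, which automatically yields the claim for the largest angle without any trigonometric inversion. Either way, the conclusion is that all three angles of $\Delta$ are $\geq \frac{\pi}{6}$, completing the proof of Theorem \ref{jacked up Chew}.
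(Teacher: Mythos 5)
Your proof is correct and is essentially the paper's argument: the paper combines the same circumradius bound $r \leq \eta$ from Proposition \ref{small circumdisk prop} with Euclid's Central Angle Theorem (central angle $\sphericalangle axb \geq \pi/3$, hence inscribed angle $\geq \pi/6$), which is exactly your Law of Sines identity $\sin \sphericalangle a = |bc|/(2r)$ in geometric form. One minor remark: the obtuse-branch worry is unnecessary, since an angle of a triangle lies in $(0,\pi)$ and $\sin \theta \geq \frac{1}{2}$ there already forces $\theta \in \left[ \frac{\pi}{6},\frac{5\pi}{6}\right] $, hence $\theta \geq \frac{\pi}{6}$.
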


\begin{proof}
Let $D_{\Delta }$ be the circumdisk of $\Delta $ and let $x$ be the center
of $D_{\Delta }$. Since the edges of $\Delta $ are in the interval $[\eta
,2\eta ]$ and the radius $r$ of $D_{\Delta }$ is $\leq \eta $, for any two
vertices, say $a,b$, of $\Delta $, we have 
\begin{equation*}
\sphericalangle axb\geq \frac{\pi }{3}.
\end{equation*}%
By Euclid's Central Angle Theorem, $\sphericalangle acb\geq \pi /6$.
\end{proof}

Before leaving the topic of Chew's Angle Theorem, we record the following
two results on the geometry of the triangles of $\mathrm{CDGs}$ that we use
in the sequel.

\begin{lemma}
\label{dist to opp segm lemma}Let $\Delta =\Delta \left( a,b,c\right) $ be a
triangle with all side lengths $\geq \eta $ and all angles $\geq \frac{\pi }{%
6}.$ If $l_{ab}$ denotes the line through $a$ and $b,$ then 
\begin{equation*}
\mathrm{dist}\left( c,l_{ab}\right) \geq \frac{\eta }{2}.
\end{equation*}%
If equality occurs, then $\sphericalangle \left( a,c,b\right) =\frac{2\pi }{3%
}$ and $\left\vert ca\right\vert =\left\vert cb\right\vert =\eta .$
\end{lemma}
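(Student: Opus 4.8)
The plan is to interpret $\dist\left( c,l_{ab}\right) $ as the height of $\Delta $ from the vertex $c$ and to express it through the interior angles at $a$ and $b$. Dropping the perpendicular from $c$ to $l_{ab}$ gives the elementary right-triangle relation
\[
\dist\left( c,l_{ab}\right) =\left\vert ca\right\vert \sin \sphericalangle a=\left\vert cb\right\vert \sin \sphericalangle b,
\]
where $\sphericalangle a$ and $\sphericalangle b$ denote the interior angles of $\Delta $ at $a$ and $b$. This identity is valid even when the foot of the perpendicular falls outside the segment $ab$ (which can happen, since one angle may be obtuse), because the perpendicular distance equals $\left\vert ca\right\vert $ times the sine of the angle between the rays $ca$ and $ab$, and $\sin \sphericalangle a=\sin \left( \pi -\sphericalangle a\right) $.

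First I would record the key structural fact: since the three angles of $\Delta $ sum to $\pi $ and each is $\geq \frac{\pi }{6}$, each angle also satisfies $\leq \pi -2\cdot \frac{\pi }{6}=\frac{2\pi }{3}$. Hence $\sphericalangle a,\sphericalangle b\in \left[ \frac{\pi }{6},\frac{2\pi }{3}\right] $. On this interval the sine function attains its minimum value $\frac{1}{2}$ at the left endpoint $\frac{\pi }{6}$, since $\sin $ increases on $\left[ \frac{\pi }{6},\frac{\pi }{2}\right] $ and decreases only to $\frac{\sqrt{3}}{2}>\frac{1}{2}$ on $\left[ \frac{\pi }{2},\frac{2\pi }{3}\right] $. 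Therefore $\sin \sphericalangle a\geq \frac{1}{2}$, and combining this with the hypothesis $\left\vert ca\right\vert \geq \eta $ yields $\dist\left( c,l_{ab}\right) =\left\vert ca\right\vert \sin \sphericalangle a\geq \frac{\eta }{2}$, which is the asserted inequality.

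For the equality case, equality in $\dist\left( c,l_{ab}\right) =\left\vert ca\right\vert \sin \sphericalangle a\geq \frac{\eta }{2}$ forces both $\left\vert ca\right\vert =\eta $ and $\sin \sphericalangle a=\frac{1}{2}$; as $\sphericalangle a\leq \frac{2\pi }{3}<\frac{5\pi }{6}$, the only admissible solution is $\sphericalangle a=\frac{\pi }{6}$. Applying the same argument to the companion identity $\dist\left( c,l_{ab}\right) =\left\vert cb\right\vert \sin \sphericalangle b$ gives $\left\vert cb\right\vert =\eta $ and $\sphericalangle b=\frac{\pi }{6}$. The angle at $c$ is then $\sphericalangle \left( a,c,b\right) =\pi -\frac{\pi }{6}-\frac{\pi }{6}=\frac{2\pi }{3}$, and $\left\vert ca\right\vert =\left\vert cb\right\vert =\eta $, as claimed.

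The only genuine subtlety — and the step I would be most careful about — is the bound $\sin \sphericalangle a\geq \frac{1}{2}$: it crucially uses the upper bound $\sphericalangle a\leq \frac{2\pi }{3}$ (itself a consequence of all angles being $\geq \frac{\pi }{6}$), for if an angle were permitted to exceed $\frac{5\pi }{6}$ its sine would drop below $\frac{1}{2}$ and the estimate would fail. Everything else reduces to the right-triangle height formula and the bookkeeping of the equality conditions.
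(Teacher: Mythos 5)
Your proof is correct and follows essentially the same route as the paper: both rest on the height identity $\mathrm{dist}\left( c,l_{ab}\right) =\left\vert ca\right\vert \sin \sphericalangle a$ together with $\sin \sphericalangle a\geq \frac{1}{2}$ and $\left\vert ca\right\vert \geq \eta $, and both extract the equality case from when these bounds are saturated. The only difference is cosmetic: the paper avoids the obtuse case by working with whichever of $\sphericalangle a,\sphericalangle b$ is acute, whereas you handle it directly via the upper bound $\sphericalangle a\leq \frac{2\pi }{3}$ coming from the angle sum, which if anything makes the equality discussion a bit more explicit.
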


\begin{proof}
At least one of $\sphericalangle a$ or $\sphericalangle b$ is acute. If for
instance $\sphericalangle a$ is acute, then 
\begin{eqnarray*}
\mathrm{dist}\left( c,l_{ab}\right) &=&\left\vert ac\right\vert \sin
\sphericalangle a \\
&\geq &\frac{\eta }{2},
\end{eqnarray*}%
as claimed. Notice that equality forces $\left\vert ac\right\vert =\eta $
and $\sphericalangle a=\frac{\pi }{6},$ and repeating this argument with $%
\sphericalangle b$ shows that in the equality case, $\left\vert
bc\right\vert =\eta $ and $\sphericalangle b=\frac{\pi }{6}.$
\end{proof}

The following is an immediate corollary.

\begin{corollary}
\label{star cor}Let $\mathcal{T}$ be a simplicial complex in $\mathbb{R}^{2}$
with side lengths in $\left[ \eta ,2\eta \right) $ and angles $\geq \frac{%
\pi }{6}.$ Then for any $v\in \mathcal{T}_{0},$%
\begin{equation*}
B\left( v,\frac{\eta }{2}\right) \cap \left\vert \mathcal{T}\right\vert
\subset \bigcup\limits_{\left\{ \left. \mathfrak{s\in }\mathcal{T}\text{ }%
\right\vert \text{ }v\in \left\vert \mathfrak{s}\right\vert \right\}
}\left\vert \mathfrak{s}\right\vert ,
\end{equation*}%
where 
\begin{equation*}
B\left( v,\frac{\eta }{2}\right) :=\left\{ \left. x\in \mathbb{R}^{2}\text{ }%
\right\vert \text{ }\mathrm{dist}\left( x,v\right) <\frac{\eta }{2}\right\} .
\end{equation*}
\end{corollary}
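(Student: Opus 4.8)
The union on the right is the closed star $\mathrm{St}(v)$ of $v$, so the assertion is precisely that $B\!\left(v,\frac{\eta}{2}\right)\cap|\mathcal{T}|\subseteq\mathrm{St}(v)$. The plan is to show that inside the ball $B\!\left(v,\frac{\eta}{2}\right)$ the only part of $|\mathcal{T}|$ present is the star itself. First I would dispose of the simplices that \emph{do} contain $v$: for each triangle $\Delta\left(v,a,b\right)$ of $\mathrm{St}(v)$, Lemma \ref{dist to opp segm lemma} applied with the opposite vertex equal to $v$ gives $\dist\left(v,l_{ab}\right)\geq\frac{\eta}{2}$, and since the distance to the segment $ab$ is at least the distance to the line $l_{ab}$, the opposite edge $ab$ lies entirely outside $B\!\left(v,\frac{\eta}{2}\right)$. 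Consequently $B\!\left(v,\frac{\eta}{2}\right)\cap\Delta\left(v,a,b\right)$ reaches none of the opposite edge and equals the intersection of the ball with the infinite angular wedge $W$ spanned at $v$ by the rays toward $a$ and $b$. Taking the union over the triangles of the star yields $B\!\left(v,\frac{\eta}{2}\right)\cap\mathrm{St}(v)=B\!\left(v,\frac{\eta}{2}\right)\cap\bigcup_i W_i$, the intersection of the ball with the local angular cone of $|\mathcal{T}|$ at $v$.

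It then remains to rule out points of $|\mathcal{T}|$ that lie in $B\!\left(v,\frac{\eta}{2}\right)$ but in a simplex not containing $v$. Fix such a point $x$ and a simplex $\mathfrak{s}\ni x$ with $v\notin\mathfrak{s}$; I may take $\mathfrak{s}$ to be a triangle $\Delta\left(a,b,c\right)$. Because $\mathcal{T}$ is a geometric simplicial complex, the $0$--simplex $\{v\}$ meets $\Delta$ only in a common face, and as $v$ is not a vertex of $\Delta$ this intersection is empty, so $v$ lies strictly outside the closed triangle $\Delta$. Hence $\dist\left(v,x\right)\geq\dist\left(v,\Delta\right)$, and it suffices to prove $\dist\left(v,\Delta\right)\geq\frac{\eta}{2}$. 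Letting $p\in\partial\Delta$ be the point of $\Delta$ nearest $v$, I would split according to whether $p$ is a vertex of $\Delta$ or lies in the relative interior of an edge of $\Delta$.

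In the first subcase $p$ is a vertex distinct from $v$; since the vertices of $\mathcal{T}$ are $\eta$--separated (all edges have length $\geq\eta$) we get $\dist\left(v,p\right)\geq\eta>\frac{\eta}{2}$. In the second subcase the foot $p$ lies on an edge $e$ of $\Delta$ with $vp\perp e$: when $e$ is the edge shared with a star triangle $\Delta\left(v,a,b\right)$, the nearest point of $\Delta$ to $v$ is on $e$ and Lemma \ref{dist to opp segm lemma} already gives $\dist\left(v,e\right)\geq\frac{\eta}{2}$, whereas when $e$ lies away from the star the bound must come from combining the edge-length lower bound $|e|\geq\eta$ with the $\eta$--separation of $e$'s endpoints from $v$. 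I expect this last situation --- an external triangle approaching $v$ through a gap in the angular cone at a \emph{boundary} vertex of $\mathcal{T}$ --- to be the only genuine obstacle, since everywhere else the conclusion is immediate once the height estimate of Lemma \ref{dist to opp segm lemma} is available. It is worth noting that the bound in that lemma is exactly $\frac{\eta}{2}$, with equality forced only in the degenerate isosceles configuration; thus the radius $\frac{\eta}{2}$ in the corollary is sharp and the argument must invoke the full strength of the lemma rather than any crude estimate.
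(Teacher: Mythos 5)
The paper gives no proof of this statement: it is introduced as ``an immediate corollary'' of Lemma \ref{dist to opp segm lemma}, and the intended content is essentially your first paragraph --- inside each triangle of the star of $v$, the edge opposite $v$ is at distance $\geq \frac{\eta}{2}$ from $v$, so $B\left(v,\frac{\eta}{2}\right)$ never reaches the link of $v$. That half of your argument is correct and matches what the authors have in mind.

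The second half, however, has a genuine gap, which you partly flag yourself. For a triangle $\Delta$ with $v\notin\Delta$ whose nearest point to $v$ lies in the relative interior of an edge $e=xy$ not belonging to the star, the estimate you propose --- combining $|e|\geq\eta$ with $\mathrm{dist}(v,x),\mathrm{dist}(v,y)\geq\eta$ --- does not yield $\mathrm{dist}(v,e)\geq\frac{\eta}{2}$: if $|e|$ is close to $2\eta$ and $v$ sits near the perpendicular bisector of $e$, those constraints only force $\mathrm{dist}(v,e)\geq\sqrt{\eta^{2}-|e|^{2}/4}$, which tends to $0$. Worse, the separation $\mathrm{dist}(v,x)\geq\eta$ is itself unjustified when $v$ and $x$ are not joined by an edge, since the hypotheses bound edge lengths, not distances between non-adjacent vertices. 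This is not merely a defect of your write-up: under the literal hypotheses the inclusion can fail at boundary vertices (take two disjoint equilateral triangles of side length $\eta$ placed a distance $\frac{\eta}{100}$ apart; a vertex of one has points of the other inside its $\frac{\eta}{2}$--ball). The statement becomes correct, and genuinely immediate, once one adds the input that every point of $B\left(v,\frac{\eta}{2}\right)\cap\left\vert\mathcal{T}\right\vert$ is joined to $v$ by a segment lying in $\left\vert\mathcal{T}\right\vert$ --- for instance when the star of $v$ is a neighborhood of $v$ in $\mathbb{R}^{2}$, as it is in the paper's applications to triangulations of full planar regions. Then a segment from $v$ to a point $x$ outside the star must first cross the link of $v$ at some point $q$, and Lemma \ref{dist to opp segm lemma} gives $\mathrm{dist}(v,x)\geq\mathrm{dist}(v,q)\geq\frac{\eta}{2}$, a contradiction. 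Your proof should either adopt this segment-crossing argument under that extra hypothesis, or acknowledge that the unqualified statement does not follow from the stated assumptions.
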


\section{Almost Legal Subdivisions\label{alm legal sub div sect}}

In this section, we prove Theorem \ref{alm leg subdiv thm}. The strategy is
to subdivide the $1$--skeleton and then to extend to the interior of the
original $2$--skeleton. Subdividing the $1$--skeleton turns out to be the
bigger challenge, so for most of the section, we focus on subdividing the
following type of graph.

\begin{definition}
We call a graph \textbf{$\eta $--geometric} provided its vertices are $\eta $%
--separated and its edge lengths are in the interval $\left[ \eta ,2\eta %
\right] .$
\end{definition}

To force our subdivided edges to be part of the new, finer Delaunay
triangulation, in most cases, we arrange that they be in the Delaunay graph.
To do this, we start by recalling

\begin{lemma}
\label{Graph Lemma}\emph{(see Lemma 6.16 on page 74 of \cite{GaHoff})} For $%
S\subset \mathbb{R}^{2},$ a segment $e$ between two points $a,b\in S$ is in
the Delaunay graph of $S$ if and only if the closed disk $D_{e}$ with
diameter $e$ contains no point of $S\setminus \left\{ a,b\right\} .$
\end{lemma}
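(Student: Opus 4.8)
The plan is to translate the geometric condition on the diameter disk $D_{e}$ into a statement about the angle subtended by $e=ab$, and then feed that into the Angle Flip Test. The bridge is Thale's Theorem: the boundary circle of $D_{e}$ is precisely the locus of points $y$ with $\sphericalangle(a,y,b)=\frac{\pi}{2}$, and the monotonicity asserted in Thale's Theorem shows that a point $y$ lies in the interior of $D_{e}$, on $\partial D_{e}$, or outside $D_{e}$ exactly according as $\sphericalangle(a,y,b)$ is $>$, $=$, or $<\frac{\pi}{2}$. Hence the hypothesis that $D_{e}$ contains no point of $S\setminus\{a,b\}$ is equivalent to saying that every other point of $S$ sees $e$ under a strictly acute angle.

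For the ``if'' direction I would reason as follows. Assume every $y\in S\setminus\{a,b\}$ satisfies $\sphericalangle(a,y,b)<\frac{\pi}{2}$. The closed disk $D_{e}$ then passes through $a$ and $b$ and contains no other point of $S$, so it is an empty witnessing disk certifying that $e$ is an edge of a Delaunay triangulation; concretely, if $\Delta apb$ and $\Delta axb$ are any two triangles sharing $e$, then $p,x\in S\setminus\{a,b\}$ give $\sphericalangle p+\sphericalangle x<\pi$, so by the Angle Flip Test and Proposition \ref{two tests are one prop} the edge $e$ is never replaced. Since this inequality is insensitive to which triangles happen to border $e$, the same conclusion holds along any sequence of Lawson flips, so by Lemma \ref{locally Del lemma} the edge $e$ survives into every Delaunay triangulation of $S$, and hence lies in the Delaunay graph by Definition \ref{del grph dfn}.

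For the ``only if'' direction I would argue by contraposition. Suppose some $c\in S\setminus\{a,b\}$ lies in $D_{e}$, so that $\sphericalangle(a,c,b)\geq\frac{\pi}{2}$. The aim is to produce a single Delaunay triangulation of $S$ in which $ab$ is not an edge, which by Definition \ref{del grph dfn} shows $ab$ is absent from the Delaunay graph. The mechanism is to use $c$ as a flip partner: in a configuration where $c$ lies across $ab$ from the opposite vertex $p$, the inequality $\sphericalangle p+\sphericalangle c\geq\pi$ makes $ab$ fail the Angle Flip Test, so $ab$ is replaced. The degenerate case $\sphericalangle(a,c,b)=\frac{\pi}{2}$ (four concyclic points) is exactly the ``either edge is acceptable'' clause of the test, and is where the non-uniqueness of Theorem \ref{Delauny exist and unique them} is used to select the flip that removes $ab$.

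The main obstacle is the ``only if'' direction. The local flip argument above presumes that $c$ already sits opposite $ab$ across an existing edge, whereas a priori $c$ may be far from $ab$, or on the same side as the unique triangle on $ab$ when $ab$ is a boundary edge of the convex hull. The careful step is therefore to reduce to this local picture -- for instance by restricting attention to the points of $S$ realizing the largest angles $\sphericalangle(a,\cdot,b)$ on either side of the line $l_{ab}$ and verifying that the empty-diameter-disk condition is governed entirely by these extremal witnesses -- so that the Angle Flip Test can be applied and certifies that $ab$ is flipped out of some Delaunay triangulation.
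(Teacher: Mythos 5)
The paper offers no proof of this lemma at all --- it is quoted from Lemma 6.16 of \cite{GaHoff}, and only its ``if'' direction is ever used (in Proposition \ref{Guarn prop}) --- so the real question is whether your argument stands on its own, and in the direction that matters it has a genuine gap. Showing that $e$ passes the angle flip test in any triangulation where two triangles $\Delta apb$ and $\Delta axb$ border it, and then invoking Lemma \ref{locally Del lemma}, yields only that $e$ is an edge of \emph{some} Delaunay triangulation; membership in the Delaunay graph (Definition \ref{del grph dfn}) requires $e$ to be an edge of \emph{every} Delaunay triangulation. The sentence ``the same conclusion holds along any sequence of Lawson flips, so $e$ survives into every Delaunay triangulation'' is where this breaks: a flip sequence starting from a triangulation that does not contain $e$ (for example $a=(0,0)$, $b=(1,0)$, $c=(0.5,10)$, $d=(0.5,-10)$ triangulated using the edge $cd$ rather than $ab$) never has $e$ to preserve, and your local test says nothing about whether its Delaunay endpoint acquires $e$. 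The missing step is global: if a triangulation $T$ omits $ab$, then, since no point of $S$ lies on the open segment $ab$ (such a point would lie in $D_{ab}$), the segment $ab$ crosses an edge $uv$ of $T$ with $u,v\notin \{a,b\}$; the convex quadrilateral $aubv$ has angle sum $2\pi$, and $u,v\notin D_{ab}$ gives $\sphericalangle \left( a,u,b\right) +\sphericalangle \left( a,v,b\right) <\pi$, hence $\sphericalangle \left( u,a,v\right) +\sphericalangle \left( u,b,v\right) >\pi$, so $a$ or $b$ lies in the open disk with diameter $uv$, and this propagates to a violation of the empty--circumdisk condition for some triangle of $T$ adjacent to $uv$. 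Some argument of this shape (or the structure theorem that two Delaunay triangulations of the same set differ only on co-circular polygons) cannot be avoided.

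For the ``only if'' direction you candidly leave the reduction to the local flip picture unresolved, and that omission is not repairable, because as literally stated this direction is false under the paper's own Definition \ref{del grph dfn}. Take $S=\left\{ (0,0),(2,0),(1,0.1)\right\} $: the segment $e$ from $(0,0)$ to $(2,0)$ is an edge of the unique, hence of every, Delaunay triangulation of $S$, so it is in the Delaunay graph, yet the closed disk $D_{e}$ (center $(1,0)$, radius $1$) contains $(1,0.1)$. The failure occurs exactly in the situation you flag as the obstacle, namely when the witness in $D_{e}$ has no partner on the other side of $l_{ab}$ with which to form a flippable quadrilateral. Since the paper only ever uses the ``if'' implication, the correct course is to prove (or cite) that implication alone and to note that the stated converse needs additional hypotheses.
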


Using this we will show

\begin{proposition}
\label{Guarn prop}For $\eta >0,$ let $S$ be an $\eta $--Gromov subset of $%
\mathbb{R}^{2}$. If $a,b\in S$ satisfy dist$(a,b)<\sqrt{2}\eta ,$ then the
segment $ab$ between $a$ and $b$ is in the Delaunay graph of $S.$
\end{proposition}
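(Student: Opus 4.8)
The plan is to use the characterization of Delaunay-graph edges provided by Lemma~\ref{Graph Lemma}: the segment $ab$ lies in the Delaunay graph of $S$ if and only if the closed disk $D_{ab}$ having $ab$ as a diameter contains no point of $S\setminus\{a,b\}$. So the entire proof reduces to showing that, under the hypothesis $\mathrm{dist}(a,b)<\sqrt{2}\,\eta$, no third point of $S$ can sit inside $D_{ab}$.

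The key geometric observation is a bound on the diameter of $D_{ab}$ relative to $\eta$. I would first argue by contradiction: suppose some $c\in S\setminus\{a,b\}$ lies in the closed disk $D_{ab}$. The radius of $D_{ab}$ is $\tfrac12\mathrm{dist}(a,b)<\tfrac{\sqrt{2}}{2}\eta$, so the diameter is $<\sqrt{2}\,\eta$. The crucial claim is then that if $a,b,c$ are three points with $c\in D_{ab}$, at least one of the three pairwise distances among $a,b,c$ must be strictly less than $\eta$, contradicting the $\eta$-separation of the Gromov set $S$. Concretely, because $c$ lies in the disk of diameter $ab$, Thale's Theorem (or just the converse of the inscribed-angle fact about diameters) tells us that the angle $\sphericalangle(a,c,b)\geq \tfrac{\pi}{2}$; that is, the triangle $\triangle acb$ is right or obtuse at $c$, so $ab$ is its longest side. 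Hence both $\mathrm{dist}(a,c)$ and $\mathrm{dist}(b,c)$ are $\leq \mathrm{dist}(a,b)$.

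That alone is not quite enough, since we need a strict contradiction with $\eta$-separation, and knowing the two legs are $\leq \mathrm{dist}(a,b)<\sqrt{2}\,\eta$ does not immediately force either leg below $\eta$. The sharp input is that in a right-or-obtuse triangle the two legs satisfy $\mathrm{dist}(a,c)^2+\mathrm{dist}(b,c)^2\leq \mathrm{dist}(a,b)^2<2\eta^2$. If both legs were $\geq\eta$, we would get $\mathrm{dist}(a,c)^2+\mathrm{dist}(b,c)^2\geq 2\eta^2$, contradicting the previous strict inequality. Therefore at least one of $\mathrm{dist}(a,c)$, $\mathrm{dist}(b,c)$ is $<\eta$, which violates the $\eta$-separation of $S$. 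This is the heart of the argument, and the step I expect to require the most care is pinning down the right inequality — the angle-at-$c$ reasoning that yields $\mathrm{dist}(a,c)^2+\mathrm{dist}(b,c)^2\leq \mathrm{dist}(a,b)^2$, since one must handle the degenerate case where $c$ lies on the boundary circle $\partial D_{ab}$ (where the angle is exactly $\tfrac{\pi}{2}$ and equality $\mathrm{dist}(a,c)^2+\mathrm{dist}(b,c)^2=\mathrm{dist}(a,b)^2$ holds) as well as the interior case.

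Having derived a contradiction, I conclude that $D_{ab}$ contains no point of $S\setminus\{a,b\}$, so by Lemma~\ref{Graph Lemma} the segment $ab$ is in the Delaunay graph of $S$, as claimed. One final remark: the $\eta$-Gromov hypothesis is used only through $\eta$-separation here, so the conclusion in fact holds for any $\eta$-separated $S$; the maximality part of the Gromov condition is not needed for this particular statement, though it presumably matters elsewhere in establishing Property 4.
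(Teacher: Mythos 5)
Your proof is correct and follows essentially the same route as the paper: reduce via Lemma~\ref{Graph Lemma} to showing $D_{ab}$ contains no third point of $S$, then derive a contradiction with $\eta$--separation. The paper phrases the key quantitative step as the observation that every point of a disk of diameter $<\sqrt{2}\,\eta$ lies within $\eta$ of $\{a,b\}$, which is exactly the fact your obtuse-angle/law-of-cosines computation establishes.
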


\begin{proof}
By Lemma \ref{Graph Lemma}, $ab$ is in the Delaunay graph of $S$ if the
closed disk $D_{ab}$ with diameter $e$ contains no point of $S\setminus
\left\{ a,b\right\} .$ If dist$(a,b)<\sqrt{2}\eta ,$ then no point in $%
D_{ab} $ is further than $\eta $ from $\left\{ a,b\right\} .$ Since $S$ is
an $\eta $--Gromov set, no point in $D_{ab}$ can be in $S\setminus \left\{
a,b\right\} .$
\end{proof}

Motivated by Proposition \ref{Guarn prop} and Definition \ref{stable edge
dfn}, we make the following definition, wherein the constant $\frac{1}{10}$
could be any small, fixed positive number.

\begin{definition}
\label{legal dfn}We call an edge of an $\eta $--geometric graph $\eta $--%
\textbf{legal }if its length is strictly less than $\left( \sqrt{2}-\frac{1}{%
10}\right) \eta .$ An $\eta $--geometric graph is called \textbf{legal }if
and only if all of its edges are $\eta $--legal.
\end{definition}

\begin{proposition}
\label{edgesub}If $e$ is a line segment with 
\begin{equation*}
\eta \leq \text{length(}e\text{)}\leq 2\eta ,
\end{equation*}%
then there is a subdivision of $e$ into a legal $\frac{\eta }{10}$%
--geometric graph.
\end{proposition}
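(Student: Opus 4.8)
The plan is to subdivide $e$ into $n$ segments of equal length for a suitably chosen integer $n$, and then to check that this single choice simultaneously meets every requirement packaged into the phrase ``legal $\frac{\eta}{10}$--geometric graph.'' Writing $L = \mathrm{length}(e)$, an equal subdivision into $n$ pieces produces edges of common length $L/n$, so I must locate an integer $n$ for which $L/n$ lands in the admissible window. Being $\frac{\eta}{10}$--geometric forces $\frac{\eta}{10} \le L/n$, while legality (Definition \ref{legal dfn}) forces $L/n < \left(\sqrt 2 - \frac{1}{10}\right)\frac{\eta}{10}$; since $\sqrt 2 - \frac{1}{10} < 2$, this legality bound is stronger than the upper geometric bound $L/n \le \frac{2\eta}{10}$, so it is the only upper constraint I need to track.

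Concretely, I would set $n = \left\lfloor \frac{10L}{\eta} \right\rfloor$. The inequality $n \le \frac{10L}{\eta}$ is immediate from the floor and yields the lower bound $L/n \ge \frac{\eta}{10}$. For the upper bound, note $L \ge \eta$ forces $\frac{10L}{\eta} \ge 10$, hence $n \ge 10$ and $n > \frac{10L}{\eta} - 1$; substituting this into $L/n$ and using $\eta/L \le 1$ gives
\begin{equation*}
\frac{L}{n} < \frac{\eta L}{10L - \eta} = \frac{\eta}{10 - \eta/L} \le \frac{\eta}{9} = \frac{10}{9}\cdot \frac{\eta}{10}.
\end{equation*}
Thus the whole argument rests on the single numerical fact $\frac{10}{9} < \sqrt 2 - \frac{1}{10}$, which certifies both that $L/n$ is legal and, a fortiori, that $L/n < \frac{2\eta}{10}$.

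Finally I would observe that the separation requirement is automatic here: the subdivision points are collinear with equal gaps $L/n$, so the minimum distance between any two of them equals $L/n \ge \frac{\eta}{10}$, which is exactly the $\frac{\eta}{10}$--separation demanded of an $\frac{\eta}{10}$--geometric graph. The main (though modest) obstacle is not a geometric subtlety but the bookkeeping at the window's endpoints—arranging that the floor lands strictly below the legality threshold while staying weakly above the density threshold—and this is precisely what the estimate $\frac{10}{9} < \sqrt 2 - \frac{1}{10}$ resolves. An alternative framing, should I prefer the window to speak for itself, is to note that the reals $n$ meeting both bounds form the interval $\left(\frac{10L}{(\sqrt2 - 1/10)\eta},\, \frac{10L}{\eta}\right]$, whose length $\frac{10L}{\eta}\bigl(1 - (\sqrt2 - 1/10)^{-1}\bigr) \ge 10\bigl(1 - (\sqrt2-1/10)^{-1}\bigr) > 1$ guarantees it contains an integer.
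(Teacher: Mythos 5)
Your proposal is correct and is essentially the paper's own argument: the paper subdivides $e$ into equal pieces using a step function $n_{\eta}$ that on $[\eta,2\eta)$ coincides with your choice $n=\lfloor 10\,\mathrm{length}(e)/\eta\rfloor$, and then verifies the same numerical window $\frac{\eta}{10}\leq \frac{L}{n}<\left(\sqrt{2}-\frac{1}{10}\right)\frac{\eta}{10}$. Your floor-function bookkeeping (and the interval-length remark) is just a cleaner packaging of the paper's interval-by-interval check.
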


\begin{proof}
We will subdivide $e$ into subedges of equal length. The number of subedges
will be between $10$ and $19$ and is the following function of the length of 
$e.$ Let 
\begin{equation*}
n_{\eta }:[\eta ,2\eta ]\rightarrow \{10,11,\ldots ,19\}
\end{equation*}%
be the step function that takes the $10$ disjoint subintervals 
\begin{equation*}
\left[ \eta ,\frac{11}{10}\eta \right) ,\left[ \frac{11}{10}\eta ,\frac{12}{%
10}\eta \right) ,\ldots ,\left[ \frac{19}{10}\eta ,2\eta \right]
\end{equation*}%
successively to $\{10,11,\ldots ,19\}.$ Thus for $k=0,1,\ldots ,9,$ 
\begin{equation*}
n_{\eta }(s)=\left\{ 
\begin{array}{ll}
10+k & \text{ if }s\in \lbrack \frac{10+k}{10}\eta ,\frac{11+k}{10}\eta ) \\ 
19 & \text{ if }s=2\eta .%
\end{array}%
\right. .
\end{equation*}

Now subdivide the edge $e$ into $n_{\eta }(\text{length}(e))$ subedges of
equal length. The length of each subedge is then 
\begin{equation*}
f(\text{length}(e)):=\frac{\text{length}(e)}{n_{\eta }(\text{length}(e))}.
\end{equation*}

Notice that the restriction of $f$ to each interval $[\frac{10+k}{10}\eta ,%
\frac{11+k}{10}\eta )$ is increasing. At the endpoints we have 
\begin{equation*}
f\left( \frac{(10+k)\eta }{10}\right) =\frac{\eta }{10}\text{ and }%
\lim_{s\rightarrow \left( \frac{(11+k)\eta }{10}\right) ^{-}}f(s)=\frac{%
(11+k)\eta }{(10+k)10}\leq \frac{11\eta }{10}.
\end{equation*}%
Therefore, for all $s\in \lbrack \eta ,2\eta ]$, 
\begin{equation*}
\frac{\eta }{10}\leq f(s)\leq \frac{11\eta }{10}<\left( \sqrt{2}-\frac{1}{10}%
\right) \frac{\eta }{10},
\end{equation*}%
and the resulting subdivision is a legal $\frac{\eta }{10}$--geometric graph.
\end{proof}

For graphs in $\mathbb{R}$ this proposition gives us

\begin{lemma}
\label{legal subdvi cor}Let $\mathcal{T}$ be an $\eta $--geometric graph in $%
\mathbb{R}.$ There is a subdivision $\mathcal{\tilde{T}}$ of $\mathcal{T}$
that is legal and $\frac{\eta }{10}$--geometric.
\end{lemma}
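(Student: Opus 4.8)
The plan is to apply Proposition \ref{edgesub} to each edge of $\mathcal{T}$ separately and then take the union of the resulting subdivisions. Since $\mathcal{T}$ is a simplicial complex in $\mathbb{R}$, its edges are closed intervals with pairwise disjoint interiors, no edge contains a vertex of $\mathcal{T}$ in its interior, and two distinct edges meet, if at all, only at a common endpoint. Each edge $e$ of $\mathcal{T}$ satisfies $\eta \leq \mathrm{length}(e)\leq 2\eta$, so Proposition \ref{edgesub} furnishes a subdivision of $e$ into a legal $\frac{\eta}{10}$--geometric graph. Because the original vertices of $\mathcal{T}$ are precisely the shared endpoints of these edges and are never subdivided away, the union of the per-edge subdivisions is a well-defined subdivision $\mathcal{\tilde{T}}$ of $\mathcal{T}$.

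It then remains to verify that $\mathcal{\tilde{T}}$ is legal and $\frac{\eta}{10}$--geometric. Legality and the edge-length bounds $\mathrm{length}(\tilde{e})\in\left[\frac{\eta}{10},\frac{2\eta}{10}\right]$ for every edge $\tilde{e}$ of $\mathcal{\tilde{T}}$ are immediate, since each such $\tilde{e}$ lies in a single subdivided edge of $\mathcal{T}$ and Proposition \ref{edgesub} supplies exactly these conclusions there.

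The only substantive point, and the step I expect to require the most care, is that the vertices of $\mathcal{\tilde{T}}$ are $\frac{\eta}{10}$--separated, since subdivision points produced on distinct edges must be kept apart. I would argue by cases on a pair of distinct vertices $p,q$ of $\mathcal{\tilde{T}}$. If both are original vertices of $\mathcal{T}$, they are already $\eta$--separated. If both are interior to a single subdivided edge, consecutive subdivision points differ by at least one subedge length, hence by at least $\frac{\eta}{10}$. If $p$ and $q$ are interior to two edges that share an endpoint $v$, then they lie on opposite sides of $v$, so $|p-q|=|p-v|+|v-q|\geq\frac{2\eta}{10}$. In the remaining configurations I would invoke the simplicial structure of $\mathcal{T}$: no original vertex lies in the interior of an edge, so an original vertex that is not an endpoint of a given edge lies weakly outside it, and two edges that do not share an endpoint have nearer endpoints that are distinct original vertices. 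In each of these cases the $\eta$--separation of the original vertices forces $|p-q|\geq\eta$, while an original vertex adjacent to an edge is at distance at least a subedge length $\geq\frac{\eta}{10}$ from any interior subdivision point of that edge. Since these cases exhaust all possibilities, $\mathcal{\tilde{T}}$ is $\frac{\eta}{10}$--separated and the proof is complete.
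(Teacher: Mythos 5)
Your proposal is correct and follows the same route as the paper, which presents this lemma as an immediate consequence of Proposition \ref{edgesub} applied edge by edge; your case analysis simply makes explicit the separation argument (consecutive subdivision points on one edge, points straddling a shared vertex, and points on disjoint edges forced apart by the $\eta$--separation of $\mathcal{T}_{0}$) that the paper leaves unstated. You also correctly identify that this edge-by-edge strategy is exactly what fails in $\mathbb{R}^{2}$, which is why the paper treats the one-dimensional case as routine.
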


If instead we assume that $\mathcal{T}$ is an $\eta $--geometric graph in $%
\mathbb{R}^{2}$, applying Proposition \ref{edgesub} to each edge does not in
general lead to an $\frac{\eta }{10}$--separated configuration. The problem
arises when the angle between adjacent edges is small. Even Chew's estimate
that all angles of $\mathrm{CDGs}$ are $\geq \frac{\pi }{6},$ is
insufficient to resolve this issue. So to prove Theorem \ref{alm leg subdiv
thm} we use the following slight modification of the concept of legal
subdivisions.

\begin{definition}
\label{alm legal dfn}Let $\mathcal{T}$ be an $\eta $--geometric graph in $%
\mathbb{R}^{2}.$ A subdivision $\mathcal{\tilde{T}}$ of $\mathcal{T}$ is
called $\frac{\eta }{10}$--\textbf{almost legal} provided $\mathcal{\tilde{T}%
}$ is $\frac{\eta }{10}$--geometric and all edges of $\mathcal{\tilde{T}}$
are $\frac{\eta }{10}$--legal except possibly for edges with a bounding
vertex in $\mathcal{T}_{0}.$ We further require that each edge $\tilde{e}$
with a bounding vertex in $\mathcal{T}_{0}$ satisfies 
\begin{equation*}
\mathrm{length}\left( \tilde{e}\right) =\frac{1}{10}\mathrm{length}\left(
e\right) ,
\end{equation*}%
where $e$ is the edge of $\mathcal{T}$ that contains $\tilde{e}.$
\end{definition}

To prove Theorem \ref{alm leg subdiv thm} it suffices to consider the case
when $\mathcal{T}$ is a maximal $\eta $--\textrm{CDG. }Moreover, it follows
from Proposition \ref{Guarn prop} that a legal edge of an $\frac{\eta }{10}$%
--\textrm{CDG }is $\left( \frac{1}{100}\frac{\eta }{10}\right) $--stable.
Thus Theorem \ref{alm leg subdiv thm} follows from

\begin{theorem}
\label{new alm legal subdiv thm}Every maximal $\eta $--\textrm{CDG }has a
subdivision that is an almost legal $\frac{\eta }{10}$--\textrm{CDG}.
\end{theorem}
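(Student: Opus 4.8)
The plan is to follow the two-stage strategy announced at the start of the section: first subdivide the $1$-skeleton of the maximal $\eta$-CDG $\mathcal{T}$ into an almost legal $\frac{\eta}{10}$-geometric graph, and then fill in the interiors of the original $2$-simplices so that the whole complex becomes a Delaunay triangulation of an $\frac{\eta}{10}$-Gromov set. On each edge $e$ of $\mathcal{T}$, of length $L\in[\eta,2\eta]$, I would place the two extreme subdivision points at distance $\frac{L}{10}$ from the respective endpoints---these produce the exceptional edges of Definition \ref{alm legal dfn}---and then subdivide the middle segment of length $\frac{8L}{10}$ into equal legal pieces using the subdivision scheme of Proposition \ref{edgesub}. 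Each interior subedge then has length at least $\frac{\eta}{10}$ and strictly less than $(\sqrt{2}-\frac{1}{10})\frac{\eta}{10}$, so it is $\frac{\eta}{10}$-legal, while each extreme subedge has length $\frac{L}{10}\in[\frac{\eta}{10},\frac{\eta}{5}]$, exactly the exceptional-edge length of Definition \ref{alm legal dfn}.

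The first---and, as the section introduction warns, the main---obstacle is to verify that all of the new vertices are $\frac{\eta}{10}$-separated, since Chew's estimate only gives angles $\geq\frac{\pi}{6}$ at a vertex, and a naive placement of points near a shared vertex whose angle is close to $\frac{\pi}{6}$ would violate separation. The key device is the uniform scaling $\phi_v(x)=v+\frac{1}{10}(x-v)$ at each original vertex $v$: because every extreme subedge has length exactly $\frac{1}{10}$ of its edge, the first subdivision point on the edge $va_i$ is precisely $\phi_v(a_i)$. Hence the collection of near-$v$ subdivision points is $\phi_v(\{\text{neighbors of }v\})$, and since the neighbors are vertices of the $\eta$-separated set, their images satisfy $|\phi_v(a_i)-\phi_v(a_j)|=\frac{1}{10}|a_i-a_j|\geq\frac{\eta}{10}$; likewise the corner triangle cut off at $v$ is $\phi_v$ of the original triangle and so is similar to it. Separation of two points on the same edge follows from the subedge lengths, and separation of points on distinct edges that are not both adjacent to a common vertex would be handled by Lemma \ref{dist to opp segm lemma} and Corollary \ref{star cor}, which confine $|\mathcal{T}|$ near $v$ to the star of $v$ and keep non-adjacent edges a definite distance apart; these estimates are routine but must be carried out with the worst-case angle $\frac{\pi}{6}$ in mind.

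For the second stage I would, in each triangle $\Delta=\Delta(v_1v_2v_3)$, retain the three similar corner triangles $\phi_{v_i}(\Delta)$ and then choose interior Steiner points so that the full vertex set $\tilde V$ (original vertices, boundary subdivision points, and Steiner points) is a maximal $\frac{\eta}{10}$-separated subset of the polygon $|\mathcal{T}|$---that is, $\frac{\eta}{10}$-dense inside $|\mathcal{T}|$ yet still $\frac{\eta}{10}$-separated, with every newly created interior edge legal. Taking $\tilde{\mathcal{T}}$ to be the Delaunay triangulation of $\tilde V$ restricted to $|\mathcal{T}|$, the circumradius bound of Proposition \ref{small circumdisk prop}, applied at scale $\frac{\eta}{10}$, makes every circumdisk small, and combined with the diametral-disk criterion of Lemma \ref{Graph Lemma} it forces every legal subedge into the Delaunay graph. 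One then extends $\tilde V$ to an $\frac{\eta}{10}$-Gromov set $\hat S$ by adding points only outside $|\mathcal{T}|$---possible precisely because $\tilde V$ is already $\frac{\eta}{10}$-dense inside $|\mathcal{T}|$---so that $\tilde{\mathcal{T}}$ is a subcomplex of the Delaunay triangulation of $\hat S$ and hence an $\frac{\eta}{10}$-CDG.

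The part I expect to be genuinely delicate is showing that the exceptional corner edges $v\,\phi_v(a_i)$, whose length can reach $\frac{\eta}{5}>\sqrt{2}\,\frac{\eta}{10}$, survive in the Delaunay triangulation of $\hat S$; unlike the legal edges they fall outside the range of Proposition \ref{Guarn prop} and so are not stable. My plan is to exploit that the entire corner configuration at $v$ is the similarity image under $\phi_v$ of the original Delaunay star at $v$, so that the corner edges are Delaunay edges of the scaled local picture, and then to choose the Steiner points and the extension $\hat S$ so as to avoid the diametral disks $D_{v\phi_v(a_i)}$. This is what should let these edges remain edges of a Delaunay triangulation even though they exceed the legality threshold, and it is exactly the reason the conclusion can only be asserted to be \emph{almost} legal.
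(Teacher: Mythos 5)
Your overall strategy is the paper's: subdivide the $1$--skeleton into an almost legal $\frac{\eta}{10}$--geometric graph with corner subedges of length exactly $\frac{1}{10}\mathrm{length}(e)$ (this is Lemma \ref{alm legal subdiv lemma}), extend the resulting vertex set to an $\frac{\eta}{10}$--Gromov set, take a Delaunay triangulation, and recover the legal subedges via Proposition \ref{Guarn prop}. The similarity observation via $\phi_{v}$ is also exactly the paper's device for the corner triangles.

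The gap is in your mechanism for keeping the exceptional corner edges. You propose to choose the Steiner points and the extension $\hat{S}$ ``so as to avoid the diametral disks $D_{v\phi_{v}(a_{i})}$,'' hoping to place these edges in the Delaunay graph via Lemma \ref{Graph Lemma}. This cannot work, because the offending points are not the ones you are free to choose: they are the \emph{other forced corner points} $\phi_{v}(a_{j})$. For example, take $v=0$, $a_{i}=(2\eta,0)$, and $a_{j}=\eta(\cos\frac{\pi}{6},\sin\frac{\pi}{6})$, a configuration compatible with Chew's bounds; then $\phi_{v}(a_{j})=(0.0866\eta,\,0.05\eta)$ lies at distance about $0.052\eta$ from the center $(0.1\eta,0)$ of $D_{v\phi_{v}(a_{i})}$, well inside that disk of radius $0.1\eta$ (the same happens for any angle at $v$ below $\frac{\pi}{3}$ when $|va_{i}|=2\eta$ and $|va_{j}|=\eta$). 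Since Lemma \ref{Graph Lemma} is an if--and--only--if, the corner edge is then \emph{not} in the Delaunay graph, and no choice of Steiner or extension points repairs this. The paper's argument is different and is the one you need: the two triangles flanking $\tilde{e}=v\phi_{v}(a_{i})$ in the subdivision are the similar corner triangles $\phi_{v}(\Delta_{1})$, $\phi_{v}(\Delta_{2})$, so the angles opposite $\tilde{e}$ equal the angles opposite $e$ in $\Delta_{1}\cup\Delta_{2}$; since $e$ is Delaunay in $\mathcal{T}$ these sum to at most $\pi$, so $\tilde{e}$ passes the angle flip test (Proposition \ref{two tests are one prop}) and hence, by Lemma \ref{locally Del lemma}, can be retained as an edge of \emph{some} Delaunay triangulation of the extended set --- which is precisely why the corner edges are only ``almost'' legal and, unlike the legal ones, are not claimed to be stable.
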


To begin the proof, we will subdivide our $1$--skeleton so that it is $\frac{%
\eta }{10}$--almost legal. The following result asserts that this is
possible.

\begin{lemma}
\label{alm legal subdiv lemma}Let $\mathcal{T}$ be an $\eta $--geometric
graph in $\mathbb{R}^{2}.$ There is a subdivision $\mathcal{\hat{T}}$ of $%
\mathcal{T}$ which is $\frac{\eta }{10}$-almost legal.
\end{lemma}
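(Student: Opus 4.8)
The plan is to subdivide $\mathcal{T}$ one edge at a time, forcing the two subedges that meet $\mathcal{T}_{0}$ to have length exactly one tenth of the original edge and splitting what is left into legal pieces. Fix an edge $e$ with endpoints $v,w\in \mathcal{T}_{0}$ and $\ell =\mathrm{length}(e)\in [\eta ,2\eta ]$, place interior vertices at distance $\frac{\ell}{10}$ from $v$ and from $w$, and note that the middle segment then has length $\frac{4\ell}{5}\in [\frac{4\eta}{5},\frac{8\eta}{5}]$. To subdivide this middle segment I would repeat the step-function argument of Proposition \ref{edgesub}: one chooses the number $m$ of equal interior subedges so that each has length in $[\frac{\eta}{10},(\sqrt{2}-\frac{1}{10})\frac{\eta}{10})$, which is possible because the interval of admissible $m$ has length bounded below by a constant larger than $1$ and hence contains an integer. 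Every resulting subedge then has length in $[\frac{\eta}{10},\frac{2\eta}{10}]$, the interior ones are $\frac{\eta}{10}$-legal in the sense of Definition \ref{legal dfn}, and the end subedges satisfy $\mathrm{length}(\tilde{e})=\frac{1}{10}\mathrm{length}(e)$, so $\hat{\mathcal{T}}$ has exactly the edge data demanded by Definition \ref{alm legal dfn}.

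What must still be verified, and what the almost-legal modification is designed for, is that the new vertex set is $\frac{\eta}{10}$-separated. Two new vertices on a common edge are automatically $\frac{\eta}{10}$ apart since every subedge has length $\geq \frac{\eta}{10}$. The delicate configuration is a pair of new vertices lying on distinct edges $e_{i},e_{j}$ sharing a vertex $v$ when the angle between $e_{i}$ and $e_{j}$ at $v$ is small; this is precisely where naive equal subdivision breaks down. The point of the construction is that the first interior vertices $p_{i}\in e_{i}$ and $p_{j}\in e_{j}$ are the images of the neighbors $q_{i},q_{j}$ of $v$ under the dilation centered at $v$ with ratio $\frac{1}{10}$. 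Since $q_{i}$ and $q_{j}$ are vertices of $\mathcal{T}$ they are $\eta$-separated, so $|p_{i}p_{j}|=\frac{1}{10}|q_{i}q_{j}|\geq \frac{\eta}{10}$, and likewise $|vp_{i}|=\frac{\ell_{i}}{10}\geq \frac{\eta}{10}$, where $\ell_i=\mathrm{length}(e_i)$.

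The remaining task is to rule out a close encounter between a later interior vertex of one edge and a vertex on another edge. Every interior vertex other than the first and last on its edge lies at distance $\geq \frac{\eta}{5}$ from $\mathcal{T}_{0}$, so I would localize the question near the shared vertex $v$ and use the star structure of Corollary \ref{star cor} to see that the only edges entering $B(v,\frac{\eta}{2})$ are those incident to $v$, while Lemma \ref{dist to opp segm lemma} keeps distinct edges of a triangle apart. The main obstacle is exactly this small-angle bookkeeping at a shared vertex: writing $|PP'|^{2}=x^{2}+y^{2}-2xy\cos \theta$ for points at distances $x,y$ along the two edges, the continuous minimum of this quadratic can drop below $\frac{\eta^{2}}{100}$, so one must use the separation $|q_{i}q_{j}|\geq \eta$ to bound $\theta$ from below in terms of $\ell_{i},\ell_{j}$ and check that the discrete subdivision points never land at that minimum. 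Once this is established, $\hat{\mathcal{T}}$ is $\frac{\eta}{10}$-geometric with the required legality, proving the lemma.
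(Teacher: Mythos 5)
Your construction is exactly the paper's: cut each edge $e$ into two end segments of length $\mathrm{length}(e)/10$ and a middle segment of length $\frac{4}{5}\mathrm{length}(e)\in\left[\frac{4\eta}{5},\frac{8\eta}{5}\right]$, then legally subdivide the middle by the step-function device of Proposition \ref{edgesub}. The paper's proof consists of only those two sentences; you go further by trying to verify that the new vertex set is $\frac{\eta}{10}$--separated, and your homothety observation for the first interior vertices, $\left\vert p_{i}p_{j}\right\vert =\frac{1}{10}\left\vert q_{i}q_{j}\right\vert \geq \frac{\eta }{10}$, is precisely the reason the definition of almost legal insists on end subedges of length exactly $\frac{1}{10}\mathrm{length}(e)$; that part is correct and more explicit than the paper.

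The gap is that the separation of the remaining pairs is announced as ``the remaining task'' but never carried out, and the route you propose for it does not work at the stated level of generality. You want to bound the angle $\theta $ at a shared vertex $v$ from below using $\left\vert q_{i}q_{j}\right\vert \geq \eta $; but the law of cosines gives $\cos \theta \leq \frac{\ell _{i}^{2}+\ell _{j}^{2}-\eta ^{2}}{2\ell _{i}\ell _{j}}$, and for $\ell _{i}=2\eta $, $\ell _{j}=\eta $ the right-hand side equals $1$, so an $\eta $--geometric graph can have adjacent edges meeting at an arbitrarily small angle: take $v=(0,0)$, $q_{i}=(2\eta ,0)$, $q_{j}=\eta (\cos \theta ,\sin \theta )$, which is $\eta $--separated for every $\theta >0$, and then the interior vertex of $e_{j}$ at distance roughly $\frac{\eta }{5}$ from $v$ ends up well within $\frac{\eta }{10}$ of the first interior vertex of $e_{i}$ as $\theta \rightarrow 0$. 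Likewise, Corollary \ref{star cor} and Lemma \ref{dist to opp segm lemma}, which you invoke to localize the problem, are statements about $2$--complexes with all angles $\geq \frac{\pi }{6}$, not about bare geometric graphs. The missing ingredient is that where the lemma is actually used --- the $1$--skeleton of a CDG --- Theorem \ref{Gromov-Del Prop} supplies $\theta \geq \frac{\pi }{6}$, and then the check closes: for points at distances $x,y$ from $v$ along the two edges, minimizing $x^{2}+y^{2}-2xy\cos \theta $ in $y$ gives $\left\vert PP^{\prime }\right\vert \geq \max (x,y)\sin \theta \geq \frac{1}{2}\max (x,y)$, which is $\geq \frac{\eta }{10}$ as soon as one of the two points is a second-or-later interior vertex (so $\max (x,y)\geq \frac{\eta }{5}$), while the case of two first interior vertices is your homothety argument. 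So either add the hypothesis that adjacent edges of $\mathcal{T}$ meet at angles $\geq \frac{\pi }{6}$ (and that $\mathcal{T}$ is the skeleton of such a complex, which also controls the distance from a new vertex to an old vertex not on its edge), or restrict to skeletons of CDGs; without some such hypothesis the separation claim fails.
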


\begin{proof}
For each edge $e$ in $\mathcal{T}$, divide $e$ into 3 subedges so that the
two subedges that contain a vertex of $e$ have length $\text{length}(e)/10$.
With a modification of the step function $n_{\eta }$ used in Proposition \ref%
{edgesub}, produce a legal subdivision of the remaining interior edge.
\end{proof}

\begin{proof}[Proof of Theorem \protect\ref{new alm legal subdiv thm}]
By Lemma \ref{alm legal subdiv lemma} there is an almost legal subdivision $%
\mathcal{G}$ of the $1$--skeleton $\mathcal{T}_{0}\cup \mathcal{T}_{1}$ of $%
\mathcal{T}.$ Let $\mathcal{\tilde{T}}_{0}$ be an extension of $\mathcal{G}%
_{0}$ to an $\frac{\eta }{10}$--Gromov subset of $\mathbb{R}^{2},$ and let $%
\mathcal{\tilde{T}}$ be a Delaunay triangulation of $\mathcal{\tilde{T}}%
_{0}. $ We claim that $\mathcal{\tilde{T}}$ can be chosen to be a
subdivision of $\mathcal{T}.$ This is equivalent to asserting that $\mathcal{%
\tilde{T}}$ can be chosen so that its collection of edges includes the edges
of $\mathcal{G}, $ that is, $\mathcal{G}_{1}\subset \mathcal{\tilde{T}}_{1}.$
To see this for $\tilde{e}\in \mathcal{G}_{1}$ we let $e$ be the unique edge
of $\mathcal{T}$ so that $\tilde{e}\subset e.$ By construction, if $\tilde{e}
$ does not contain a vertex of $e,$ then $\tilde{e}$ is legal and therefore $%
\tilde{e}\in \mathcal{\tilde{T}}_{1}.$ If on the other hand, $\tilde{e}$
does contains a vertex $v$ of $e$, then it is precisely $\frac{1}{10}$ the
length of $e.$ It follows that $\tilde{e}$ is contained in two triangles $%
\tilde{\Delta}_{1},$ $\tilde{\Delta}_{2}$ that are similar to the two
triangles $\Delta _{1},$ $\Delta _{2}$ of $\mathcal{T}$ that contain $e.$
Since $e$ is an edge of the Delaunay triangulation $\mathcal{T},$ it follows
from the angle flip test that we may choose $\mathcal{\tilde{T}}$ so that $%
\tilde{e}\in \mathcal{\tilde{T}}_{1}.$ Thus $\mathcal{\tilde{T}}$ is an $%
\frac{\eta }{10}$--\textrm{CDG }that is an almost legal subdivision of $%
\mathcal{T}.$
\end{proof}

\section{Deforming CDGs\label{epilog section}}

In essence, the proof of Theorem \ref{finiteness} exploits Theorems \ref{CDG
them}, \ref{Gromov-Del Prop}, and \ref{alm leg subdiv thm} together with the
principle that Riemannian manifolds are infinitesimally euclidean. Since
Theorem \ref{finiteness} deals with infinitely many Riemannian manifolds
simultaneously, turning this principle into a rigorous proof requires some
careful analytic arguments on how these results deform. The purpose of this
section is to carry out this analysis.

One issue is that the boundary of an $\eta $-CDG need not be stable in the
sense of Definition \ref{stable edge dfn}. To remedy this, in Subsection \ref%
{stable graph subsect}, we show that every $\eta $-CDG, $\mathcal{T},$ has
extension $\mathcal{T}_{\mathrm{st}}$ whose boundary is stable and is also
not too far from $\mathcal{T}.$ More precisely, $\left\vert \mathcal{T}_{%
\mathrm{st}}\right\vert \subset B\left( \left\vert \mathcal{T}\right\vert
,2\eta \right) $. In Subsection \ref{almost CDGs subsect}, we define a
deformation of the concept of a $\mathrm{CDG}$ that we call an almost $%
\mathrm{CDG.}$ Most of the key properties of almost $\mathrm{CDGs}$ are
proven in Subsection \ref{almost CDGs subsect}. In particular we generalize
Theorem \ref{Gromov-Del Prop}. In Subsection $\ref{alm leg subsect},$ we
complete this process by explaining how to subdivide almost $\mathrm{CDGs}$
in a manner that generalizes Theorem \ref{alm leg subdiv thm}.

\addtocounter{algorithm}{1}

\subsection{The $\protect\xi $--stable Graph\label{stable graph subsect}}

In this subsection, we show we can always choose an extension $\tilde{%
\mathcal{T}}$ of an $\eta $-CDG $\mathcal{T}$ so that the boundary edges of $%
\mathcal{T}_{\mathrm{st}}$ are stable in the sense of Definition \ref{stable
edge dfn}. More specifically, we will prove

\begin{proposition}
\label{stable ext prop}There is an $\xi >0$ with the following property. Let 
$\mathcal{T}$ be an $\eta $--\textrm{CDG }which is a subcomplex of the
maximal $\eta $--\textrm{CDG, }$\mathcal{T}_{\mathrm{\max }}.$ There is an $%
\eta $--\textrm{CDG,} $\mathcal{T}_{\mathrm{st}},$ so that 
\begin{eqnarray*}
\mathcal{T} &\subset &\mathcal{T}_{\mathrm{st}}\subset \mathcal{T}_{\mathrm{%
\max }}, \\
\left\vert \mathcal{T}_{\mathrm{st}}\right\vert &\subset &B\left( \left\vert 
\mathcal{T}\right\vert ,2\eta \right) ,
\end{eqnarray*}%
and the boundary of $\mathcal{T}_{\mathrm{st}}$ consists of $\xi \eta $%
--stable edges.
\end{proposition}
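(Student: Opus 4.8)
The plan is to construct $\mathcal{T}_{\mathrm{st}}$ by thickening $\mathcal{T}$: I add to $\mathcal{T}$ all simplices of $\mathcal{T}_{\mathrm{max}}$ that lie within a bounded number of ``layers'' of $\mathcal{T}$, where a layer consists of triangles adjacent to the current boundary. Since all edges of $\mathcal{T}_{\mathrm{max}}$ have length in $[\eta,2\eta]$ by Chew's Angle Theorem (Theorem \ref{Gromov-Del Prop}), adding finitely many layers moves the boundary outward by a distance controlled by a multiple of $\eta$; a fixed number of layers (say two or three) will keep $\left\vert\mathcal{T}_{\mathrm{st}}\right\vert\subset B(\left\vert\mathcal{T}\right\vert,2\eta)$. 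The key point is that $\xi$ must be chosen \emph{uniformly}, independent of the particular CDG, so I must extract a scale-invariant quantitative estimate.

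First I would establish the uniform stability of an individual edge. Fix a boundary edge $ab$ of the candidate $\mathcal{T}_{\mathrm{st}}$. By Lemma \ref{Graph Lemma}, $ab$ lies in the Delaunay graph of the vertex set precisely when the closed disk $D_{ab}$ with diameter $ab$ contains no other vertex. The stability question, via Definition \ref{stable edge dfn}, asks that this remain true under any embedding $\iota$ with $\left\vert\iota-\mathrm{id}\right\vert<\xi\eta$. So I would show there is a definite gap: every vertex $v\neq a,b$ of $\mathcal{T}_{\mathrm{max}}$ satisfies $\dist(v,D_{ab})\geq c\eta$ for a universal constant $c>0$. This is where the layering construction pays off: because $\mathcal{T}_{\mathrm{st}}$ is grown from $\mathcal{T}$ by adding triangles, a boundary edge of $\mathcal{T}_{\mathrm{st}}$ bounds a triangle that was \emph{removed} (i.e.\ not added), and the circumdisk/angle geometry of CDG triangles — all angles $\geq\pi/6$, all edges in $[\eta,2\eta]$, circumradius $\leq\eta$ by Proposition \ref{small circumdisk prop} — forces nearby vertices to stay a definite distance off of $D_{ab}$. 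Once such a gap $c\eta$ is in hand, any $\iota$ moving points by less than $\xi\eta$ with $\xi<c/3$ cannot push a vertex into $D_{\iota(a)\iota(b)}$, so $\iota(a)\iota(b)$ stays in the Delaunay graph.

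The main obstacle I expect is proving the uniform gap estimate, i.e.\ that for a correctly chosen boundary edge no other vertex of $\mathcal{T}_{\mathrm{max}}$ comes within $c\eta$ of $D_{ab}$. A generic boundary edge of an arbitrary subcomplex need not have this property — a vertex of $\mathcal{T}_{\mathrm{max}}$ could sit right against $D_{ab}$, in which case an arbitrarily small perturbation destroys the Delaunay-graph membership and no uniform $\xi$ exists. This is exactly why the proposition chooses $\mathcal{T}_{\mathrm{st}}$ rather than taking $\mathcal{T}_{\mathrm{st}}=\mathcal{T}$: I get to \emph{select} which triangles to include so that the resulting boundary edges are ``safe.'' The right selection rule is to include a triangle whenever its circumdisk is disjoint-enough from the outside vertices, and I would argue by the pigeonhole/finiteness of local configurations (Corollary \ref{star cor} bounds how many triangles meet a small ball around any vertex, since the complex is $\eta$-geometric) that within the first two or three layers there is always a boundary consisting entirely of safe edges. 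Concretely, the number of vertices in any ball $B(v,C\eta)$ is bounded by a universal constant depending only on $C$ by the $\eta$-separation, so the set of possible local pictures near a boundary edge is compact in an appropriate sense; a boundary edge that is \emph{not} safe can be fixed by absorbing the offending triangle into $\mathcal{T}_{\mathrm{st}}$, and this absorption process terminates within the allotted layers.

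Finally I would verify the three asserted inclusions and the length bound simultaneously with the stability conclusion. The inclusions $\mathcal{T}\subset\mathcal{T}_{\mathrm{st}}\subset\mathcal{T}_{\mathrm{max}}$ are immediate from the layering construction since I only ever add simplices already present in $\mathcal{T}_{\mathrm{max}}$; the containment $\left\vert\mathcal{T}_{\mathrm{st}}\right\vert\subset B(\left\vert\mathcal{T}\right\vert,2\eta)$ follows because each added layer extends the polyhedron by at most one triangle's diameter $\leq 2\eta$ and I use a bounded number of layers with the diameters summing (after the safe-edge selection forces at most, say, two effective layers) to at most $2\eta$. The value of $\xi$ is then read off from the universal gap constant $c$ produced in the obstacle step, and crucially it does not depend on $\eta$ or on the chosen CDG, only on the fixed angle bound $\pi/6$ and edge-ratio bound $2$, giving a single $\xi>0$ as required.
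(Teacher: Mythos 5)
Your high-level strategy --- absorb simplices of $\mathcal{T}_{\mathrm{max}}$ across the bad boundary edges until the boundary becomes stable --- is the same as the paper's, but the step you yourself flag as ``the main obstacle'' is precisely the content of the proof, and your sketch of it does not close. The paper's engine is a structural characterization of unstable edges (its Proposition \ref{unstableflip}): if a boundary edge $bd$ of a maximal $\eta$--CDG is $\eta\xi$--unstable, then $b$ and $d$ are non-adjacent vertices of an $n$--gon $\mathcal{P}$ of $\mathcal{T}_{\mathrm{max}}$ that is almost inscribed in a disk of radius $\leq\eta$, with $n\leq 6$, and --- crucially --- every \emph{boundary} edge of $\mathcal{P}$ has length forced to be close to $\eta$, hence strictly below $\bigl(\sqrt{2}-\tfrac{1}{10}\bigr)\eta$, hence $\xi\eta$--stable by the length criterion coming from Proposition \ref{Guarn prop}. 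This single lemma delivers everything at once: absorbing the outer component of $|\mathcal{P}|$ fixes the edge $bd$ permanently (the newly exposed boundary edges are already stable, so the count of unstable boundary edges strictly decreases), the absorbed region lies in $B(|e|,2\eta)$ because $\mathcal{P}$ sits in a disk of radius about $\eta$, and the uniform $\xi$ comes from the explicit bound $n\leq 6$ together with the explicit length gap $\eta<\bigl(\sqrt2-\tfrac1{10}\bigr)\eta$.

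Your proposal replaces this with two assertions that are not proved and, as stated, are not provable by the means you invoke. First, the claim that ``the circumdisk/angle geometry of CDG triangles forces nearby vertices to stay a definite distance off of $D_{ab}$'' is false for a generic boundary edge: four vertices of a Gromov set can lie exactly on a common circle (a square of side $\approx\eta\sqrt2/\text{something}$ inscribed in a disk of radius $\leq\eta$), and then the diagonal has the other two vertices \emph{on} $\partial D_{ab}$, so no uniform gap $c\eta$ exists and no finite number of ``layers'' of adjacent triangles automatically produces one. Second, the appeal to pigeonhole/compactness of local configurations to conclude that the absorption terminates ``within two or three layers'' with a stable boundary is exactly the point at issue; compactness of the space of configurations would at best give \emph{some} $\xi$ for each configuration, not a uniform one, unless you rule out the degenerate limits --- which is what the explicit polygon argument does. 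Finally, your own accounting of the Hausdorff bound does not work: two or three layers of triangles of diameter up to $2\eta$ each would give $B(|\mathcal{T}|,4\eta)$ or $B(|\mathcal{T}|,6\eta)$, not $B(|\mathcal{T}|,2\eta)$; the paper gets $2\eta$ only because the absorbed set per unstable edge is contained in a single almost-inscribed polygon of circumradius about $\eta$ anchored on that edge, not because of a layer count. So the proposal has the right skeleton but is missing the one lemma that makes it a proof.
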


First notice that Proposition \ref{Guarn prop} gives us

\begin{corollary}
\label{stable by len cor}Let $\mathcal{T}$ be an $\eta $--CDG. For all
sufficiently small $\varepsilon >0,$ an edge $e$ of $\mathcal{T}$ is $%
\varepsilon \eta $-stable provided 
\begin{equation*}
\mathrm{length}\left( e\right) <\eta \left( \sqrt{2}-10\varepsilon \right) .
\end{equation*}
\end{corollary}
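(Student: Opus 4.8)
The plan is to unwind Definition \ref{stable edge dfn} and reduce the claim, via the disk criterion of Lemma \ref{Graph Lemma}, to showing that a single disk is empty of extraneous points. Write $e=ab$ and let $S$ be the $\eta$--Gromov set of which $\mathcal{T}$ is a subcomplex of a Delaunay triangulation; in particular $a,b\in S$ and $S$ is $\eta$--separated. Fix an embedding $\iota:S\to\mathbb{R}^{2}$ with $\left\vert\iota-\mathrm{id}_{S}\right\vert<\varepsilon\eta$. By Lemma \ref{Graph Lemma}, it suffices to show that the closed disk $D$ with diameter $\iota(a)\iota(b)$ contains no point of $\iota(S)\setminus\{\iota(a),\iota(b)\}$. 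It is tempting to invoke Proposition \ref{Guarn prop} directly, but the perturbed set $\iota(S)$ need not be $\eta$--Gromov, so that argument does not apply verbatim; this is the crux of the matter, and the remedy is to carry out the emptiness estimate against $\iota(S)$ yet draw the final contradiction from the rigid separation of the unperturbed set $S$.

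First I would control the perturbed edge length: since $a$ and $b$ each move by less than $\varepsilon\eta$, the triangle inequality gives
\begin{equation*}
\left\vert\iota(a)\iota(b)\right\vert<\mathrm{length}(e)+2\varepsilon\eta<\eta(\sqrt{2}-10\varepsilon)+2\varepsilon\eta=\eta(\sqrt{2}-8\varepsilon).
\end{equation*}
The key geometric input, which also underlies Proposition \ref{Guarn prop}, is that every point $p$ of a disk of diameter $\delta$ satisfies $\min(\left\vert p-\iota(a)\right\vert,\left\vert p-\iota(b)\right\vert)\le\delta/\sqrt{2}$, the maximum being attained at the two endpoints of the chord along the perpendicular bisector (a Thales right-angle computation). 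Applied to $D$, this shows every point of $D$ lies within $\eta(1-4\sqrt{2}\,\varepsilon)$ of $\{\iota(a),\iota(b)\}$.

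Now suppose toward a contradiction that $\iota(s)\in D$ for some $s\in S\setminus\{a,b\}$, and without loss of generality that $\iota(a)$ is the nearer endpoint. Pulling back through $\iota$ with two further applications of the triangle inequality yields
\begin{equation*}
\left\vert s-a\right\vert\le\left\vert s-\iota(s)\right\vert+\left\vert\iota(s)-\iota(a)\right\vert+\left\vert\iota(a)-a\right\vert<\varepsilon\eta+\eta(1-4\sqrt{2}\,\varepsilon)+\varepsilon\eta=\eta\bigl(1-(4\sqrt{2}-2)\varepsilon\bigr).
\end{equation*}
Since $4\sqrt{2}-2>0$, the right-hand side is strictly less than $\eta$, contradicting the $\eta$--separation of $S$ (note $S$ being $\eta$--separated also keeps $\iota(S)$ discrete, so its Delaunay graph is well defined). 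Hence $D$ is empty of other points of $\iota(S)$, and by Lemma \ref{Graph Lemma} the segment $\iota(a)\iota(b)$ lies in the Delaunay graph of $\iota(S)$; as $\iota$ was arbitrary, $e$ is $\varepsilon\eta$--stable.

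The main obstacle is conceptual rather than computational: perturbation destroys the Gromov maximality property, so one cannot simply quote Proposition \ref{Guarn prop} and must instead run the disk-emptiness argument against the fixed separation of $S$. The only bookkeeping to watch is the constant flow: the hypothesis $\mathrm{length}(e)<\eta(\sqrt{2}-10\varepsilon)$ is non-vacuous exactly when $\varepsilon<\sqrt{2}/10$, and in that range the $10\varepsilon$ slack comfortably absorbs the $2\varepsilon$ lost to edge stretching and the further $2\varepsilon$ gained on pullback—one checks that any slack coefficient exceeding $2+2\sqrt{2}$ would do, so $10\varepsilon$ is a safe round choice that keeps the decisive inequality strict. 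This is precisely what the phrase ``sufficiently small'' secures, and it matches the role of $\varepsilon$ in Theorem \ref{alm leg subdiv thm}.
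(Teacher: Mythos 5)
Your proof is correct and follows essentially the route the paper intends: the paper simply asserts that the corollary follows from Proposition \ref{Guarn prop}, and your argument is the natural fleshing-out of that claim via the disk criterion of Lemma \ref{Graph Lemma}, correctly noting that one must rerun the disk-emptiness estimate against the $\eta$--separation of the unperturbed set $S$ (since $\iota(S)$ need not be Gromov) and verifying that the $10\varepsilon$ slack absorbs the perturbation losses.
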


Next we note that definition of Delaunay triangulation gives us

\begin{corollary}
Let $\mathcal{T}_{0}$ be a discrete point set in $\mathbb{R}^{2}$. For $%
b,d\in \mathcal{T}_{0}$ suppose that $bd$ is an edge of a Delaunay
triangulation of $\mathcal{T}_{0}.$ Then $bd$ is not in the Delaunay graph
of $\mathcal{T}_{0}$ if and only if $bd$ is a diagonal of a quadrilateral $Q$
of $\mathcal{T}$ that is inscribed in a disk $D$ and $\text{int }D\cap 
\mathcal{T}_{0}=\emptyset $. \emph{(Cf. Corollary 6.17 in \cite{GaHoff}.)}
\end{corollary}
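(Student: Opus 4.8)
The plan is to prove both implications from the empty-circumdisk characterization of Delaunay triangulations together with the angle flip test (Proposition \ref{two tests are one prop}), working with the two triangles of $\mathcal{T}$ adjacent to the edge $bd$.

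For the easier backward implication, suppose $bd$ is a diagonal of a quadrilateral $Q = bpdx$ of $\mathcal{T}$ inscribed in a disk $D$ with $\mathrm{int}\,D \cap \mathcal{T}_0 = \emptyset$. Here $Q$ is the union of the two Delaunay triangles $\Delta(b,p,d)$ and $\Delta(b,d,x)$. I would replace the diagonal $bd$ by the diagonal $px$, producing a new triangulation $\mathcal{T}'$ that agrees with $\mathcal{T}$ outside $Q$ and splits $Q$ into $\Delta(p,b,x)$ and $\Delta(p,d,x)$. Because all four vertices lie on $\partial D$, the circumcircle of each new triangle is exactly $\partial D$, so each new circumdisk equals $D$ and hence has empty interior, while every other triangle is unchanged. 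Thus $\mathcal{T}'$ is Delaunay and omits $bd$, so by Definition \ref{del grph dfn} the edge $bd$ is not in the Delaunay graph.

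For the forward implication, suppose $bd$ is not in the Delaunay graph, so some Delaunay triangulation $\mathcal{T}'$ does not contain $bd$. First I would observe that $bd$ cannot be an edge of the convex hull of $\mathcal{T}_0$, since hull edges belong to every triangulation and hence to the Delaunay graph; therefore $bd$ is an interior edge of $\mathcal{T}$ bounding exactly two triangles $\Delta(b,p,d)$ and $\Delta(b,d,x)$, which form a quadrilateral $Q = bpdx$ with diagonal $bd$. It remains to show that $b,p,d,x$ are concyclic and that the common circumdisk $D$ has empty interior. Emptiness is immediate once concyclicity is known, since the circumdisk of the Delaunay triangle $\Delta(b,p,d)$ already has empty interior and coincides with $D$ when $x \in \partial D$. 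So the entire forward direction reduces to proving concyclicity, i.e. $\sphericalangle p + \sphericalangle x = \pi$.

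Since $bd$ is Delaunay, the angle flip test gives $\sphericalangle p + \sphericalangle x \leq \pi$, so I only need to rule out strict inequality, and this is where I expect the main obstacle. I would show that strict inequality makes $bd$ \emph{strongly} Delaunay, which forces it into every Delaunay triangulation and contradicts the existence of $\mathcal{T}'$: the empty circumdisk $D_1$ of $\Delta(b,p,d)$ passes through $b$ and $d$ with $x$ strictly outside, and by sliding its center slightly along the perpendicular bisector of $bd$ (keeping $b,d$ on the boundary and shrinking toward the midpoint) I can produce an empty closed disk through $b$ and $d$ whose boundary meets $\mathcal{T}_0$ only in $\{b,d\}$, using finiteness of $\mathcal{T}_0$ to clear the remaining points. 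A standard argument then shows that such a strongly Delaunay edge lies in every Delaunay triangulation. The delicate points I would handle carefully are the perturbation step—ensuring the shrunken disk keeps its interior empty while pushing every cocircular point strictly outside—and the degenerate cases in which several points of $\mathcal{T}_0$ are cocircular with $b,p,d$.
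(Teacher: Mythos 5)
Your backward implication is complete and correct, and the overall two-triangle strategy for the forward direction is sound. For context: the paper gives no argument at all for this corollary beyond citing Corollary 6.17 of G\"artner--Hoffmann, so a self-contained proof is welcome; but as written yours has two concrete problems in the perturbation step. First, ``shrinking toward the midpoint of $bd$'' is the wrong prescription when the angle of $\Delta bpd$ at $p$ is obtuse: then the circumcenter of $\Delta bpd$ lies on the $x$-side of the line through $b$ and $d$, and moving it toward the midpoint \emph{enlarges} the cap on the $p$-side, pulling $p$ itself into the interior of the new disk. The correct move is to slide the center along the perpendicular bisector \emph{away from the side containing $p$}, regardless of where the midpoint is. Concretely, put $b=(-h,0)$, $d=(h,0)$ and let $D_t$ be the disk through $b,d$ with center $(0,t)$; a point $q=(q_1,q_2)$ with $q_2>0$ satisfies $q\in D_t$ iff $2tq_2\geq q_1^2+q_2^2-h^2$, so decreasing $t$ expels every boundary point on the $p$-side.

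Second, and more seriously, the cocircular degeneracy you flag is not a technicality you can defer: if some $q\in\mathcal{T}_0$ lies on $\partial D_1$ (the circumcircle of $\Delta bpd$) on the $x$-side of $bd$, then decreasing $t$ moves $q$ \emph{into} the interior, and no disk of the pencil through $b,d$ omits both $p$'s side and $q$. You must rule this configuration out, and the tool is the second triangle: $\Delta bdx$ is also a triangle of the Delaunay triangulation $\mathcal{T}$, so its closed circumdisk $D_2=D_{t_2}$ has empty interior. In the pencil above, $q\in D_t$ for $q_2<0$ iff $t\leq (q_1^2+q_2^2-h^2)/(2q_2)$; the assumption that $x$ lies strictly outside $D_1=D_{t_1}$ gives $t_2<t_1$, and then any $q\in\partial D_{t_1}$ with $q_2<0$ lies in $\mathrm{int}\,D_{t_2}$, contradicting that $\Delta bdx$ is Delaunay. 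Hence every point of $\mathcal{T}_0$ on $\partial D_1$ other than $b,d$ lies on the $p$-side, the perturbed closed disk meets $\mathcal{T}_0$ only in $\{b,d\}$ (discreteness disposes of the finitely many relevant exterior points), and the standard characterization of the Delaunay graph --- an edge belongs to it iff \emph{some} closed disk through its endpoints contains no other site; note this is not the diametral-disk statement of Lemma \ref{Graph Lemma}, so it needs its own citation or proof --- gives the contradiction. With these two repairs your argument is complete.
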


Combining this with the definition of $\xi $-stable, gives us the following
result, wherein we use the term $\xi $-unstable to denote an edge $e$ that
is in some Delaunay triangulation of $\mathcal{T}_{0}$ but is not $\xi $%
--stable.

\begin{corollary}
There is a $\xi >0$ with the following property. Let $\mathcal{T}$ be a
maximal $\eta $--CDG and let $bd\in \mathcal{T}_{1}$ be the diagonal of the
quadrilateral $Q$ of $\mathcal{T}.$ Then $bd\in \mathcal{T}_{1}$ is $\eta
\xi $--unstable if and only if there is a disk $D$ that is related to the
vertex set $Q_{0}$ of $Q$ as follows: 
\begin{equation}
Q_{0}=D\cap \mathcal{T}_{0}\text{ and }Q_{0}\subset B(\partial D,\eta \xi ),
\label{q almo inscr}
\end{equation}%
where $B(\partial D,\eta \xi )$ denotes the open $\eta \xi $--ball around $%
\partial D.$
\end{corollary}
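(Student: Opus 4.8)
The plan is to unpack the definition of $\xi$-stability and combine it with the preceding corollary characterizing when a Delaunay edge fails to lie in the Delaunay graph. The key observation is that an edge $bd$ is $\eta\xi$-unstable precisely when arbitrarily small perturbations (below the threshold $\eta\xi$) can push $bd$ out of the Delaunay graph, and by the preceding corollary, $bd$ leaves the Delaunay graph exactly when it becomes the diagonal of a quadrilateral inscribed in an empty disk. So the two conditions I must match are: (i) $bd$ is $\eta\xi$-unstable, and (ii) there is a disk $D$ witnessing the ``almost-inscribed'' condition \eqref{q almo inscr}. First I would fix the framework: $\mathcal{T}$ is a maximal $\eta$-CDG with vertex set $\mathcal{T}_0$, and $bd$ is a diagonal of the quadrilateral $Q$, meaning $Q = Q_0$ has vertex set $\{a,b,c,d\}$ with $bd$ an interior edge shared by two triangles of $\mathcal{T}$.

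For the forward direction, I would assume $bd$ is $\eta\xi$-unstable, so there is an embedding $\iota$ with $|\iota - \mathrm{id}_{\mathcal{T}_0}| < \eta\xi$ for which $\iota(b)\iota(d)$ is \emph{not} in the Delaunay graph of $\iota(\mathcal{T}_0)$. By the preceding corollary applied to the perturbed set, $\iota(b)\iota(d)$ is then the diagonal of a quadrilateral inscribed in a disk $D'$ with $\mathrm{int}\,D' \cap \iota(\mathcal{T}_0) = \emptyset$. The four vertices $\iota(a),\iota(b),\iota(c),\iota(d)$ lie on $\partial D'$, and since each moved by less than $\eta\xi$, the original points $a,b,c,d$ lie within $\eta\xi$ of $\partial D'$; thus $Q_0 \subset B(\partial D', \eta\xi)$. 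Pulling $D'$ back (or simply taking $D = D'$ as the witnessing disk) gives a disk satisfying the second half of \eqref{q almo inscr}. The equality $Q_0 = D\cap\mathcal{T}_0$ requires more care: I must argue that the only points of $\mathcal{T}_0$ lying inside this disk are exactly the four vertices of $Q$, which follows because $\mathcal{T}$ is Delaunay (so the original circumdisk contains no other vertices) and because for small $\xi$ the perturbed empty disk $D'$ cannot swallow a fifth $\eta$-separated vertex.

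For the converse, I would assume such a disk $D$ exists with $Q_0 = D\cap\mathcal{T}_0$ and $Q_0 \subset B(\partial D,\eta\xi)$, and produce a perturbation $\iota$, supported on the four vertices of $Q$ and moving each by less than $\eta\xi$, that places all four exactly on the boundary of a common circle. Once $a,b,c,d$ are concyclic, the angle flip test gives equality $\sphericalangle p + \sphericalangle x = \pi$, so $\iota(b)\iota(d)$ fails to be a forced edge and drops out of the Delaunay graph; hence $bd$ is $\eta\xi$-unstable. The main obstacle is this concyclicity construction together with the uniform choice of $\xi$: I need a single threshold $\xi$, independent of which maximal $\eta$-CDG and which quadrilateral is considered, guaranteeing that whenever all four vertices lie within $\eta\xi$ of a common circle one can perturb them onto that circle without the disk engulfing any other vertex of the $\eta$-separated set. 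This hinges on a compactness/scaling argument: by rescaling so $\eta = 1$, the relevant configurations of quadrilaterals with edge lengths in $[1,2]$, angles $\geq \pi/6$, and empty circumdisks of radius $\leq 1$ form a compact family, so a uniform $\xi$ can be extracted. The quantitative bookkeeping relating ``within $\eta\xi$ of $\partial D$'' to ``a perturbation of size $<\eta\xi$ onto a circle'' is routine once this compactness is in place, so I would state the uniform-$\xi$ existence as the crux and leave the $\varepsilon$-$\xi$ estimates to a short computation.
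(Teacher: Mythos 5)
Your proposal is correct and follows essentially the same route as the paper, which gives no separate proof and simply asserts that the corollary follows by combining the preceding corollary (the empty-circumdisk characterization of Delaunay edges outside the Delaunay graph) with the definition of $\xi$--stability --- exactly the two ingredients you combine in the two directions. The quantitative points you defer (enlarging the perturbed disk by $O(\eta \xi )$ so that $Q_{0}=D\cap \mathcal{T}_{0}$ holds for the \emph{unperturbed} vertices, handling a perturbation under which $\iota (b)\iota (d)$ is no longer an edge of any Delaunay triangulation via an intermediate cocircular configuration, and extracting a uniform $\xi $ from the $\eta $--separation and radius-$\leq \eta $ circumdisk bounds) are left implicit in the paper as well.
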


It is possible that the boundary edges of $Q$ are unstable, but the manner
in which this can happen is constrained by the rigidity of $\mathrm{CDGs}$
via the following result.

\begin{proposition}
\label{unstableflip}For every $\zeta >0$ there is a $\xi >0$ with the
following property. Let $\mathcal{T}$ be a maximal $\eta $--CDG. If an edge $%
bd\in \mathcal{T}_{1}$ is $\eta \xi $--unstable, then all of the following
hold.

\begin{enumerate}
\item There is an $n$--gon $\mathcal{P}\ $and a disk $D$ so that the vertex
set $\mathcal{P}_{0}$ of $\mathcal{P}$ is related to the disk $D$ as
follows: 
\begin{equation}
\left\{ b,d\right\} \subset \mathcal{P}_{0}=D\cap \mathcal{T}_{0},\text{ and 
}\mathcal{P}_{0}\subset B(\partial D,\eta \zeta ).  \label{alm inscrbd}
\end{equation}%
Moreover, $n=4,5,$ or $6.$

\item The vertices $b,d$ are non-adjacent vertices of $\mathcal{P}$.

\item The boundary edges of $\mathcal{P}$ are $\eta \xi $-stable.
\end{enumerate}
\end{proposition}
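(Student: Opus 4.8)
The plan is to build $\mathcal{P}$ by starting from the quadrilateral furnished by the preceding characterization of unstable diagonals, and then enlarging it one vertex at a time across any boundary edge that is still $\eta\xi$--unstable. After rescaling so that $\eta=1$, fix $\zeta>0$; the constant $\xi$ will be chosen at the very end, small relative to $\zeta$. Since $bd$ is $\xi$--unstable, the preceding corollary produces a quadrilateral $Q$ of $\mathcal{T}$ with diagonal $bd$ and a disk $D$ with $Q_{0}=D\cap\mathcal{T}_{0}\subset B(\partial D,\xi)$; this already gives Condition $1$ with $n=4$, and Condition $2$ holds because $b,d$ are the endpoints of a diagonal and hence non-adjacent. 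The real work is Condition $3$ and, when it fails, the passage to $n=5,6$.

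Two a priori bounds drive everything. First I would show that the radius $r$ of any such near-empty disk satisfies $r\le 1+\xi$: if all of $D\cap\mathcal{T}_{0}$ lies within $\xi$ of $\partial D$, then the concentric disk of radius $r-\xi$ contains no vertex, while $1$--density (a maximal CDG is $1$--dense) forces a vertex within $1$ of the center, so $r-\xi\le 1$. Second, any $1$--separated set lying within $\zeta$ of a circle of radius $\le 1+\zeta$ has at most six points: the chord between consecutive points is $\ge 1$, so each subtends a central angle $\ge 2\arcsin\frac{1}{2(1+\zeta)}$, which exceeds $\tfrac{\pi}{3}-c\zeta$; six such gaps already nearly exhaust $2\pi$, so at most six points fit, and for $\zeta$ small they are in convex position. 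This caps $n\le 6$.

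The enlargement step is the heart of the matter. Suppose a boundary edge $e=pq$ of the current polygon (an edge of $\mathcal{T}$, hence the diagonal of its two adjacent triangles) is $\xi$--unstable. The preceding corollary again exhibits a near-inscribed quadrilateral with $e$ as diagonal; its two apexes lie on opposite sides of $pq$, the inner one $r_{\mathrm{in}}$ already being a vertex of $\mathcal{P}$ and the outer one $w$ lying outside $D$. The key point is that $p,q,r_{\mathrm{in}}$ are common, up to the current tolerance, to both the old circle $\partial D$ and the new near-circumscribing circle, and that these three points span a triangle all of whose angles are $\ge\frac{\pi}{6}$: by the law of sines their pairwise distances are $\ge 1$ and their circumradius is $\le 1+\zeta$, so they are uniformly far from collinear. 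The quantitative fact that three such points determine their circle up to an error $\le K\cdot(\text{tolerance})$, with $K$ uniform, then places $w$ within $K$ times the current tolerance of $\partial D$. Enlarging $D$ by this amount yields a near-inscribed polygon $\mathcal{P}\cup\{w\}$ in which $pq$ is replaced by the two $\mathcal{T}$--edges $pw,wq$, the vertices $b,d$ remain non-adjacent, and $n$ increases by one. Since $n\le 6$, at most two enlargements occur, so the tolerance grows from $\xi$ to at most $K^{2}\xi$; choosing $\xi\le\zeta/K^{2}$ secures Condition $1$ for the final disk and polygon.

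Finally, the process halts either because every boundary edge is $\xi$--stable, which is exactly Condition $3$, or because $n=6$; in the latter case the six $1$--separated points near a radius-$(\le 1+\zeta)$ circle are forced into near-regular position (every gap $\approx\frac{\pi}{3}$), so each boundary edge is a chord of length $\approx 1<\sqrt{2}-10\xi$ and is therefore $\xi$--stable by Corollary \ref{stable by len cor}; thus Condition $3$ holds in this case as well. I expect the main obstacle to be the quantitative circle-stability lemma, that three nearly-cocircular, uniformly non-degenerate points pin down their circle with a uniform modulus, together with the bookkeeping needed to guarantee that the enlarged disk meets $\mathcal{T}_{0}$ in exactly $\mathcal{P}_{0}$, i.e. that no stray vertex hides in the widened annulus; such a vertex would itself be a near-circle point, forced to be absorbed, which is what ultimately keeps the construction consistent and caps $n$ at $6$. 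A cleaner but less explicit alternative is a rescaling--compactness argument: if the statement failed for some $\zeta$ along $\xi_{k}\to 0$, the pointed limit of the $1$--CDGs would contain an exactly cocircular configuration for which the clean polygon exists, and the openness of stability would transfer the conclusion back to large $k$.
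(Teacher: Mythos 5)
Your proposal is correct and follows essentially the same route as the paper's proof: start from the near-inscribed quadrilateral given by the preceding corollary, enlarge across any $\eta\xi$--unstable boundary edge using the three shared, uniformly non-degenerate vertices to control the Hausdorff distance between the successive disks, cap $n$ at $6$ by the $\eta$--separation and radius bounds, and conclude stability of the hexagon's edges from Corollary \ref{stable by len cor}. Your extra care about stray vertices in the widened annulus and the explicit circle-stability constant only make quantitative what the paper expresses via the function $\tau$.
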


\begin{proof}
From the previous result, we have that there is $\xi >0$ so that if $bd\in 
\mathcal{T}_{1}$ is $\eta \xi $--unstable, then there is a disk $D$ and a
quadrilateral $Q:=Q\left( a,b,c,d\right) $ that satisfy Conclusions $1$ and $%
2$ with $\zeta =\xi $. If the boundary edges of $Q$ are $\eta \xi $-stable,
we are done. Otherwise, one of the boundary edges, say $ab$ of $Q$ is $\eta
\xi $-unstable. By the previous corollary, $ab$ is the diagonal of a
quadrilateral $\tilde{Q}$ whose vertex set contains $a,b,d,$ and one
additional point $p.$ Moreover, there is a disk $\tilde{D}$ that is related
to $\tilde{Q}$ as in (\ref{q almo inscr}). Since 
\begin{equation*}
\left\{ a,b,d\right\} \subset B(\partial D,\eta \xi )\text{ and }\left\{
a,b,d\right\} \subset B(\partial \tilde{D},\eta \xi ),
\end{equation*}%
\begin{equation*}
\mathrm{dist}_{\mathrm{Haus}}\left( D,\tilde{D}\right) <\tau \left( \eta \xi
\right) ,
\end{equation*}%
where $\tau :\mathbb{R}\rightarrow \mathbb{R}_{+}$ is some function that
satisfies $\lim_{t\rightarrow 0}\tau \left( t\right) =0.$

Thus the pentagon $\mathcal{P}\left( a,p,b,c,d\right) $ and the disk $D$
satisfy (\ref{alm inscrbd}) with $\zeta :=\tau \left( \xi \right) .$ Again
we are done if the boundary edges of $\mathcal{P}$ are $\eta \xi $-stable.

Otherwise, we repeat the argument above and obtain a hexagon $\mathcal{H}$
of $\mathcal{T}$ whose vertex set includes $\left\{ a,p,b,c,d\right\} $ and
satisfies (\ref{alm inscrbd}) for some disk $\hat{D}.$ In an $\eta $--%
\textrm{CDG }all edge lengths are $\geq \eta $ and all circumdisks have
radius $<\eta ;$ so if an $n$--gon $\mathcal{P}$ of an $\eta $--\textrm{CDG }%
satisfies (\ref{alm inscrbd}) for some disk $D,$ then $n\leq 6,$ and all
edge lengths of $\mathcal{P}$ are nearly $\eta .$ Thus by Corollary \ref%
{stable by len cor}, there is an $\xi _{6}>0$ so that all edges of $\mathcal{%
P}$ are $\xi _{6}\eta $--stable.
\end{proof}

\begin{proof}[Proof of Proposition \protect\ref{stable ext prop}]
For $\zeta \in \left( 0,\frac{1}{1,000}\right) ,$ let $\xi >0$ be as in
Proposition \ref{unstableflip}. Let $e$ be an $\eta \xi $--unstable boundary
edge of $\mathcal{T}.{\huge \ }$By Proposition \ref{unstableflip}, there is
an $n\in \left\{ 4,5,6\right\} $ so that $e$ connects two nonadjacent
vertices of an $n$--gon $\mathcal{P}$ of $\mathcal{T}_{\mathrm{\max }},$ $%
\mathcal{P}$ is almost inscribed in a disk, and the boundary edges of $%
\mathcal{P}$ are $\eta \xi $-stable.

Our edge $e$ separates $\left\vert \mathcal{P}\right\vert $ into two
components. Call them $C_{-}$ and $C_{+},$ and let $C_{-}$ be the component
that contains the face of $\mathcal{T}$ that has $e$ in its boundary.

Now form a new complex $\mathcal{T}_{e}$ which is the union of $\mathcal{T}$
with the simplices of $\mathcal{T}_{\mathrm{\max }}\setminus \mathcal{T}$
that are contained in $\left\vert \mathcal{P}\right\vert \cap C_{+}.$ Since $%
\mathcal{P}$ is almost inscribed in a disk of radius $\eta ,\ $and since $%
\mathcal{P}$ is an $n$--gon with $n\in \left\{ 4,5,6\right\} $ and side
lengths $\geq \eta ,$ 
\begin{equation*}
\mathcal{T}_{\mathrm{e}}\setminus \mathcal{T}\subset B\left( \left\vert
e\right\vert ,2\eta \right) .
\end{equation*}%
Thus 
\begin{equation*}
\mathcal{T}_{\mathrm{e}}\subset B\left( \left\vert \mathcal{T}\right\vert
,2\eta \right) .
\end{equation*}%
Since the number of unstable boundary edges of $\mathcal{T}_{\mathrm{e}}$ is
one less than the number of unstable boundary edges of $\mathcal{T},$
repeating this process a finite number of times completes the proof.
\end{proof}

\addtocounter{algorithm}{1}

\subsection{Almost CDGs\label{almost CDGs subsect}}

In this subsection we define almost $\mathrm{CDGs}$ and discuss their key
properties. First, notice that $A\subset X$ is $\eta $--Gromov if and only
if 
\begin{equation*}
B\left( A,\eta \right) =X,
\end{equation*}%
and for distinct $a,b\in A,$ 
\begin{equation*}
\mathrm{dist}\left( a,b\right) \geq \eta .
\end{equation*}%
So the following is a natural deformation of this condition.

\begin{definition}
Given $\eta ,\delta >0,$ we say a subset $A$ of a metric space $X$ is an $%
\left( \eta ,\delta \eta \right) $--\textbf{Gromov set} if and only if 
\begin{equation*}
B\left( A,\eta \left( 1+\delta \right) \right) =X,
\end{equation*}%
and for distinct $a,b\in A,$ 
\begin{equation*}
\mathrm{dist}\left( a,b\right) \geq \eta \left( 1-\delta \right) .
\end{equation*}
\end{definition}

\begin{figure}[tbp]
\includegraphics[scale=.20]{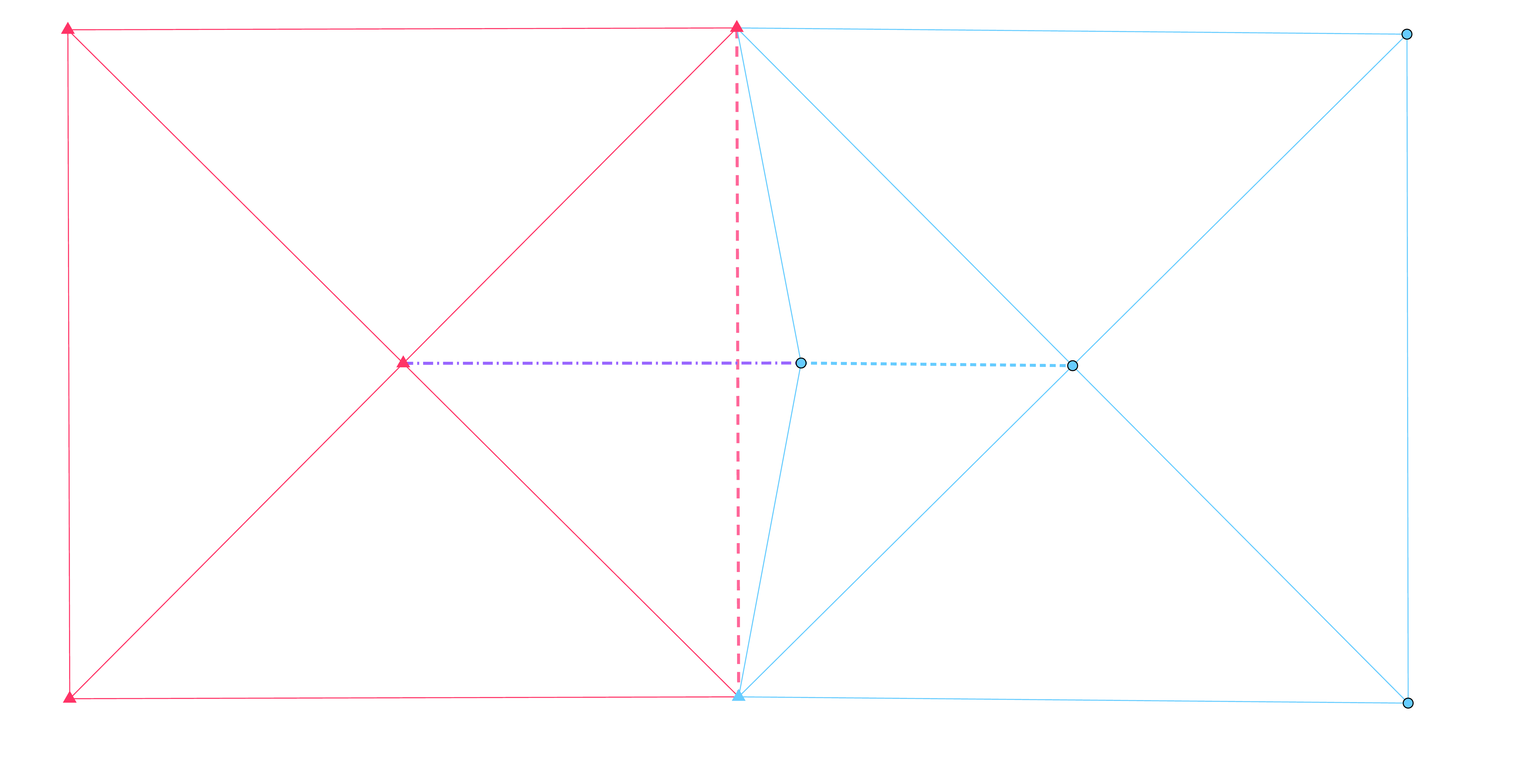}
\caption{{\protect\tiny {Assume the side lengths of these squares is just
under $2$. Then the red vertices are a $1$--Gromov subset of the red square,
and the red edges are a Delaunay triangulation of the red vertices. The blue
vertices are an extension of the red ones to a $1$--Gromov subset of the
union of the two squares, but the dashed red edge fails the angle flip test
and hence is not Delaunay for the extended configuration. The Delaunay
triangulation of the extended set consists of all of the edges pictured
except for the dashed red one.}}}
\label{sqrt3}
\end{figure}

A naive definition of $\mathrm{CDGs}$ is that they are Delaunay
triangulations of Gromov sets. The problem with this idea is that while
Gromov sets can always be extended, their Delaunay triangulations cannot
necessarily be extended (see Figure \ref{sqrt3}). Our actual definition of $%
\mathrm{CDGs}$ skirts this issue by placing $\mathrm{CDGs}$ inside of
maximal ones. In our application, almost $\mathrm{CDGs}$ will not be
presented inside of maximal complexes, but fortunately the technicality
presented in Figure \ref{sqrt3} is local at heart, and the following
definition exploits this fact.

\begin{definitionnonum}
\label{ext grm dfn}Let $X$ be a metric space with $U,U_{\mathrm{x}}\subset X$
open in $X.$ Given $\eta >0$ and $\delta \geq 0,$ let $A$ and $A_{\mathrm{x}%
} $ be $\left( \eta ,\delta \eta \right) $--Gromov subsets of the closures
of $U$ and $U_{\mathrm{x}}$ respectively. We say that $A_{\mathrm{x}}$ is a 
\textbf{buffer }of $A$ provided:

\begin{enumerate}
\item The closed ball $D(U,6\eta )\subset U_{\mathrm{x}}$.

\item $A=A_{\mathrm{x}}\cap \mathrm{closure}\left( U\right) .$
\end{enumerate}
\end{definitionnonum}

While a perturbation of a $\mathrm{CDG}$ can fail the angle flip test, it
will nevertheless pass the following easier test.

\begin{definition}
Let $\mathcal{T}$ be a simplicial complex in $\mathbb{R}^{2}.$ Let $e$ be an
edge of $\mathcal{T}$ which bounds two faces. We say that $e$ passes the $%
\varepsilon $--angle flip test if and only if the sum of the angles opposite 
$e$ is $\leq \pi +\varepsilon .$
\end{definition}

We are now ready for the definition of almost $\mathrm{CDGs.}$

\begin{definition}
\label{alm CDG dfn}Given $\eta ,\delta ,\varepsilon >0,$ let $\mathcal{T}%
_{0} $ be an $\left( \eta ,\delta \eta \right) $--Gromov subset of the
closure of an open set $U\subset \mathbb{R}^{2}$ with buffer $\left( 
\mathcal{T}_{0}\right) _{\mathrm{x}}.$ A triangulation $\mathcal{T}$ of $%
\mathcal{T}_{0} $ is called an $\left( \eta ,\delta \eta ,\varepsilon
\right) $--CDG provided $\mathcal{T}$ is a subcomplex of a triangulation $%
\mathcal{T}_{\mathrm{x}}$ of $\left( \mathcal{T}_{0}\right) _{\mathrm{x}}$,
and each edge of $\mathcal{T}_{\mathrm{x}}$ passes the $\varepsilon $--angle
flip test.
\end{definition}

Arguing as in the proof of Proposition \ref{stable ext prop}, we get

\begin{corollary}
\label{stable edge cor}There are $\varepsilon ,\xi ,\delta >0$ with the
following property. If $\mathcal{T}$ is an $\left( \eta ,\delta \eta
,\varepsilon \right) $--\textrm{CDG }with buffer $\mathcal{T}_{\mathrm{x}},$
then there is an $\left( \eta ,\delta \eta ,\varepsilon \right) $--\textrm{%
CDG,} $\mathcal{T}_{\mathrm{st}},$ so that 
\begin{eqnarray*}
\mathcal{T} &\subset &\mathcal{T}_{\mathrm{st}}\subset \mathcal{T}_{\mathrm{x%
}}, \\
\left\vert \mathcal{T}_{\mathrm{st}}\right\vert &\subset &B\left( \left\vert 
\mathcal{T}\right\vert ,2\eta \right) ,
\end{eqnarray*}%
and the boundary of $\mathcal{T}_{\mathrm{st}}$ consists of $\xi \eta $%
--stable edges.
\end{corollary}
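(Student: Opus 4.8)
The plan is to mimic the proof of Proposition \ref{stable ext prop} almost verbatim, with the buffer $\mathcal{T}_{\mathrm{x}}$ playing the role that the maximal CDG $\mathcal{T}_{\mathrm{\max}}$ played there. The only real work is to produce, in the present perturbed setting, the three supporting facts used in that argument: a stability--by--length statement (the analogue of Corollary \ref{stable by len cor}), the characterization of an unstable edge as the diagonal of a nearly inscribed quadrilateral, and the rigidity statement that an unstable edge is in fact the diagonal of a nearly inscribed $n$--gon with $n\in\{4,5,6\}$ and stable boundary (the analogue of Proposition \ref{unstableflip}). Granting these, the extension procedure and its termination are formally identical to the proof of Proposition \ref{stable ext prop}.

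First I would fix the constants. Every geometric input to the proof of Proposition \ref{unstableflip} is robust under passing from an exact CDG to an $\left(\eta,\delta\eta,\varepsilon\right)$--CDG: the vertices are $\eta\left(1-\delta\right)$--separated rather than $\eta$--separated, the circumdisks of the faces have radius at most $\eta\left(1+\delta\right)$ rather than $\leq\eta$, and the exact angle flip test is replaced by the $\varepsilon$--angle flip test. Consequently the two facts that drive the argument --- that all edge lengths of a nearly inscribed $n$--gon are close to $\eta$, forcing $n\leq 6$, and that short edges are stable by Corollary \ref{stable by len cor} (equivalently by Proposition \ref{Guarn prop}) --- continue to hold provided $\delta$ and $\varepsilon$ are small. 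I would therefore choose $\delta,\varepsilon>0$ small enough, and then $\xi>0$ small enough, that the almost--CDG versions of Corollary \ref{stable by len cor}, of the quadrilateral characterization, and of Proposition \ref{unstableflip} all hold with the same conclusions, with every constant degrading continuously from its $\delta=\varepsilon=0$ value.

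With these in hand I would run the iteration. Let $e$ be an $\eta\xi$--unstable boundary edge of $\mathcal{T}$. The almost--CDG version of Proposition \ref{unstableflip} produces an $n$--gon $\mathcal{P}$ of $\mathcal{T}_{\mathrm{x}}$, with $n\in\{4,5,6\}$, that is almost inscribed in a disk of radius close to $\eta$, has $e$ as a diagonal joining two nonadjacent vertices, and has $\eta\xi$--stable boundary edges. As before, $e$ separates $\left\vert\mathcal{P}\right\vert$ into $C_{-}$ and $C_{+}$, where $C_{-}$ carries the face of $\mathcal{T}$ bordering $e$; adjoining to $\mathcal{T}$ the triangles of $\mathcal{T}_{\mathrm{x}}$ lying in $\left\vert\mathcal{P}\right\vert\cap C_{+}$ yields a complex $\mathcal{T}_{e}$. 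Because $\mathcal{P}$ fits in a disk of radius close to $\eta$ and has side lengths $\geq\eta\left(1-\delta\right)$, one gets $\mathcal{T}_{e}\setminus\mathcal{T}\subset B\left(\left\vert e\right\vert,2\eta\right)$. The newly exposed boundary edges of $\mathcal{T}_{e}$ are exactly the $C_{+}$--side boundary edges of $\mathcal{P}$, which are stable; hence the number of unstable boundary edges drops by one at each step, every edge ever processed is a boundary edge of the original $\mathcal{T}$, and the successive additions all lie in $B\left(\left\vert\mathcal{T}\right\vert,2\eta\right)$. Since $\left\vert\mathcal{T}\right\vert\subset\mathrm{closure}\left(U\right)$ and each addition grows the complex by at most $2\eta$, the buffer condition $D\left(U,6\eta\right)\subset U_{\mathrm{x}}$ guarantees that these triangles are genuinely available in $\mathcal{T}_{\mathrm{x}}$, so $\mathcal{T}_{e}\subset\mathcal{T}_{\mathrm{x}}$ remains an $\left(\eta,\delta\eta,\varepsilon\right)$--CDG. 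Iterating finitely often produces $\mathcal{T}_{\mathrm{st}}$.

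The main obstacle is the second paragraph: verifying that the rigidity of Proposition \ref{unstableflip} --- in particular the bound $n\leq 6$ and the stability of the boundary of $\mathcal{P}$ --- truly persists once the exact angle flip test is weakened to the $\varepsilon$--angle flip test and the Gromov condition is relaxed to $\left(\eta,\delta\eta\right)$. This amounts to re--examining each inequality in the proof of Proposition \ref{unstableflip} and confirming that it survives with an explicit positive margin when $\delta,\varepsilon$ are small, and then choosing the constants so that these margins are simultaneously positive. Once the supporting lemmas are re--established, the extension bookkeeping and the finiteness of the iteration are identical to the proof of Proposition \ref{stable ext prop} and require no new ideas.
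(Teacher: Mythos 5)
Your proposal is correct and follows exactly the route the paper takes: the paper's entire proof of Corollary \ref{stable edge cor} is the remark ``Arguing as in the proof of Proposition \ref{stable ext prop}, we get,'' i.e.\ rerun that argument with the buffer $\mathcal{T}_{\mathrm{x}}$ in place of $\mathcal{T}_{\mathrm{\max}}$ after checking that the supporting lemmas persist for small $\delta,\varepsilon$. You in fact supply more detail than the paper does on why the constants can be chosen and why the iteration stays inside the buffer.
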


\begin{definition}
We say that an $\left( \eta ,\delta \eta ,\varepsilon \right) $--CDG is $\xi
\eta $--stable provided each of its boundary edges $\mathcal{T}$ is $\xi
\eta $--stable$.$
\end{definition}

While an $\left( \eta ,\delta \eta ,\varepsilon \right) $--CDG is not
necessarily Delaunay, it is almost Delaunay in the following sense.

\begin{definition}
A triangulation $\mathcal{T}$ of a point set of $\mathbb{R}^{2}$ is called $%
\varepsilon $--\textbf{Delaunay} if and only if for each $2$--simplex $%
\Delta \in \mathcal{T},$ all vertices of $\mathcal{T}$ that are in $%
D_{\Delta }$ are within $\varepsilon $ of $\partial D_{\Delta }$.
\end{definition}

The proof of Proposition \ref{two tests are one prop} gives us

\begin{proposition}
\label{equiv flips prop}Given $\kappa >0$ and a sufficiently small $\delta
>0 $, there is an $\varepsilon >0$ so that every $\left( \eta ,\delta \eta
,\varepsilon \right) $-CDG is $\kappa $-Delaunay.
\end{proposition}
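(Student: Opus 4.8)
The plan is to work inside the buffer triangulation $\mathcal{T}_{\mathrm{x}}$, where every edge passes the $\varepsilon$--angle flip test, and to exploit the fact that the buffer contains the closed $6\eta$--ball about $U$ (Definition \ref{alm CDG dfn}). Since an almost $\mathrm{CDG}$ has bounded geometry for small $\delta,\varepsilon$ — edge lengths comparable to $\eta$ and circumradii $\lesssim\eta$, this being the almost--$\mathrm{CDG}$ analogue of the estimates in Section \ref{chew sect} — every circumdisk of a triangle of $\mathcal{T}$, together with the vertices it could contain, lies well inside $U_{\mathrm{x}}$, so the flip hypothesis is available wherever we need it. The argument then has two parts: a local, quantitative form of the flip test, and a global propagation that upgrades the local estimate to a statement about all vertices of a given circumdisk.

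For the local step I would redo the proof of Proposition \ref{two tests are one prop} quantitatively. There, Thales' theorem and Euclid's Central Angle Theorem show that for a quadrilateral $Q=apbx$ with diagonal $ab$ shared by $\Delta_{1}=\Delta apb$ and $\Delta_{2}=\Delta axb$, one has $x\in\partial D_{\Delta_{1}}$ exactly when $\sphericalangle p+\sphericalangle x=\pi$, with $x$ inside precisely when the sum exceeds $\pi$. The $\varepsilon$--angle flip test gives $\sphericalangle p+\sphericalangle x\le\pi+\varepsilon$, and by the continuity inherent in this correspondence — made uniform using the bounded geometry — the opposite vertex $x$ lies at most $\rho(\varepsilon)$ inside $D_{\Delta_{1}}$, where $\rho(\varepsilon)\to0$ as $\varepsilon\to0$. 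In words, each edge of $\mathcal{T}_{\mathrm{x}}$ certifies that the single adjacent opposite vertex is $\rho(\varepsilon)$--nearly outside the relevant circumdisk.

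The global step is the quantitative version of the standard ``locally Delaunay implies Delaunay'' argument (Lemma \ref{locally Del lemma}). Given a triangle $\Delta$ and a vertex $v\in D_{\Delta}$, I would march along the straight segment from an interior point of $\Delta$ to $v$, crossing a sequence of triangles $\Delta=\Delta^{(0)},\Delta^{(1)},\dots,\Delta^{(N)}$, the last of which has $v$ as a vertex. At the edge shared by $\Delta^{(i)}$ and $\Delta^{(i+1)}$ the two circumcircles pass through its endpoints, and Step 1 forces the far arc of $\partial D_{\Delta^{(i+1)}}$ to lie at most $\rho(\varepsilon)$ inside that of $\partial D_{\Delta^{(i)}}$; consequently the depth $\dist(v,\partial D)$ of $v$ decreases by at most $\rho(\varepsilon)$ from one circumdisk to the next. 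Since $v$ is a vertex of $\Delta^{(N)}$ it lies on $\partial D_{\Delta^{(N)}}$, so summing gives $\dist(v,\partial D_{\Delta})\le N\rho(\varepsilon)$. The bounded geometry bounds the number of crossings $N$ by a universal $N_{0}$ (the segment has length $\le 2\,\mathrm{radius}(D_{\Delta})\lesssim\eta$ while each triangle crossed has width $\gtrsim\eta$), and all triangles involved stay within $U_{\mathrm{x}}$ by the $6\eta$ buffer. Choosing $\varepsilon$ small enough that $N_{0}\,\rho(\varepsilon)<\kappa$ then yields the $\kappa$--Delaunay property.

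The main obstacle is the global step — specifically establishing the uniform ``depth monotonicity'' $\dist(v,\partial D_{\Delta^{(i+1)}})\ge\dist(v,\partial D_{\Delta^{(i)}})-\rho(\varepsilon)$ from the one--edge estimate of Step 1, and verifying that the crossing count $N_{0}$ is genuinely uniform; both rest on the bounded geometry of almost $\mathrm{CDGs}$. Alternatively, one can replace Steps 1--2 by a compactness argument: rescale to $\eta=1$ and suppose the conclusion fails for a fixed $\kappa$ along $\delta_{i},\varepsilon_{i}\to0$; the bounded geometry lets one extract a limiting local configuration that, by Proposition \ref{two tests are one prop} and Lemma \ref{locally Del lemma}, is genuinely Delaunay, contradicting the persistence of a vertex lying depth $\ge\kappa$ inside a limiting circumdisk.
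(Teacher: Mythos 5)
Your proposal is correct in outline and its first half is exactly the paper's argument: the paper proves Proposition \ref{equiv flips prop} with the single sentence ``the proof of Proposition \ref{two tests are one prop} gives us \dots,'' i.e.\ it invokes only your local step, the quantitative form of the Lawson/angle flip equivalence showing that an angle sum $\leq \pi +\varepsilon $ places the opposite vertex at most $\rho \left( \varepsilon \right) $ inside the adjacent circumdisk. Where you genuinely add something is the global step. The definition of $\kappa $--Delaunay quantifies over \emph{all} vertices lying in $D_{\Delta }$, not merely the two vertices opposite the edges of $\Delta $, so some quantitative version of the local--to--global Lemma \ref{locally Del lemma} is in fact needed; the paper leaves this entirely implicit, while you supply it via the marching argument with depth loss $\rho \left( \varepsilon \right) $ per edge crossing and a uniform crossing count $N_{0}$. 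Your compactness alternative is also a legitimate, and in some ways cleaner, substitute for both steps. In short: same key lemma, but your write-up fills a real gap in the paper's one-line proof.

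One caution on the logical order. Both your uniform $\rho \left( \varepsilon \right) $ and your bound on $N_{0}$ (and the extraction of a nondegenerate limit in the compactness variant) lean on the bounded-geometry estimates --- circumradius $\leq \eta +d$, edge lengths in $\left[ \eta -d,2\eta +d\right] $, angles $\geq \frac{\pi }{6}-\theta $ --- of Proposition \ref{almost chew}. In the paper those estimates are derived \emph{after}, and essentially \emph{from}, the $\kappa $--Delaunay property you are trying to prove (the circumradius bound in the model argument of Proposition \ref{small circumdisk prop} uses the absence of vertices deep inside $D_{\Delta }$ together with density). As written, your appeal to ``bounded geometry'' is therefore potentially circular. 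This is repairable --- for instance by running the circumradius bound and the depth bound as a joint bootstrap, or by first bounding the depth of a vertex in terms of the length of the marching segment to the nearest vertex guaranteed by $\eta \left( 1+\delta \right) $--density --- but you should say explicitly how the two estimates are disentangled rather than citing the a priori bounds wholesale.
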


Arguing as in the proof of Chew's Angle Theorem (\ref{Gromov-Del Prop}) we
get

\begin{proposition}
\label{almost chew}Given $d,\theta >0$, there are $\varepsilon ,\delta >0$
so that for every $\left( \eta ,\delta \eta ,\varepsilon \right) $-CDG,

\begin{enumerate}
\item The radius of every circumdisk of every two simplex of $\mathcal{T}$
is $\leq \eta +d.$

\item Every edge length of $\mathcal{T}$ is in the interval $\left[ \eta
-d,2\eta +d\right] .$

\item All angles of $\mathcal{T}$ are $\geq \frac{\pi }{6} -\theta.$
\end{enumerate}
\end{proposition}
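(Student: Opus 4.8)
The plan is to revisit the three-step proof of Chew's Angle Theorem (Proposition \ref{small circumdisk prop} and the two propositions following it) and track how each estimate degrades when the exact Gromov/Delaunay hypotheses are loosened to $\left(\eta,\delta\eta,\varepsilon\right)$. The governing principle is a continuity argument: every inequality in the original proof was strict in a way that leaves room, so for $\delta,\varepsilon$ small the conclusions persist with an $O\left(\delta\right)+O\left(\varepsilon\right)$ error that we can absorb into the prescribed slack $d$ and $\theta$. The natural order is to prove (1) first, since (2) is an immediate chord estimate from (1) exactly as in the exact case, and (3) then follows from (1) and (2) via Euclid's Central Angle Theorem.

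For Conclusion (1), I would mimic Proposition \ref{small circumdisk prop}. Let $\Delta$ be a $2$--simplex with circumdisk $D_\Delta$ of radius $r$ and center $x$. The original argument says: if $r>\eta$, the center $x$ is farther than $\eta$ from every vertex, contradicting $\eta$--density. Now two things change. First, the set is only $\left(\eta,\delta\eta\right)$--Gromov, so the density guarantee is $B\left(A,\eta\left(1+\delta\right)\right)=X$; thus the empty-circumdisk contradiction only forces $r\leq\eta\left(1+\delta\right)$. Second, and this is the crux, $\mathcal{T}$ is not genuinely Delaunay — it is merely $\varepsilon$--Delaunay by Proposition \ref{equiv flips prop}, so $D_\Delta$ may contain vertices, but only ones within some controlled distance $\kappa$ of $\partial D_\Delta$. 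Invoking Proposition \ref{equiv flips prop} with a suitably small $\kappa=\kappa\left(d\right)$ (choosing $\delta$ small, then $\varepsilon$ small as that proposition permits) keeps any interior vertices so close to the boundary that the center $x$ is still essentially distance $r$ from all vertices, and the density contradiction again caps $r$ at $\eta+d$ once the error terms are driven below $d$. So the mechanism is: pick $d,\theta$; choose $\kappa$ small enough that the intrusion of near-boundary vertices costs less than $d$; feed that $\kappa$ into Proposition \ref{equiv flips prop} to fix $\varepsilon,\delta$.

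Conclusion (2) is then routine: an edge $e$ is a chord of $D_\Delta$, so by the triangle inequality $\mathrm{length}\left(e\right)\leq 2r\leq 2\left(\eta+d\right)=2\eta+2d$, and the lower bound $\mathrm{length}\left(e\right)\geq\eta\left(1-\delta\right)\geq\eta-d$ comes directly from the $\left(\eta,\delta\eta\right)$--Gromov separation; shrinking $d$ by a harmless factor of $2$ at the outset handles the $2d$ versus $d$ discrepancy. For Conclusion (3), I would reproduce the angle estimate: for vertices $a,b$ of $\Delta$ with center $x$, the edge $ab$ has length $\geq\eta-d$ and $x$ is at distance $\leq\eta+d$ from each, so $\sphericalangle axb$ is bounded below by something approaching $\frac{\pi}{3}$ as $d\to 0$; then Euclid's Central Angle Theorem gives the inscribed angle $\geq\frac{\pi}{6}-\theta$ once $d$ is small. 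The main obstacle is bookkeeping the order of quantifiers correctly: the $\varepsilon$ in Proposition \ref{equiv flips prop} depends on a chosen $\kappa$ and $\delta$, so I must fix $d$ and $\theta$ first, translate the desired $d,\theta$ slack into a required $\kappa$ (via elementary Euclidean geometry of the center-to-vertex distances), and only then extract compatible $\varepsilon,\delta$ — making sure the $\delta$ needed for the density/separation estimates is no larger than the $\delta$ that Proposition \ref{equiv flips prop} demands. Taking the minimum of all the $\delta$'s and the corresponding $\varepsilon$ finishes the argument.
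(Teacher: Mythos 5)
Your proposal is correct and takes essentially the same approach as the paper: the paper's entire proof of this proposition is the one-line remark that one argues as in the proof of Chew's Angle Theorem, and you carry out exactly that deformation, using Proposition \ref{equiv flips prop} to confine stray vertices to a $\kappa$--collar of each circumdisk boundary and then tracking the $O\left(\delta\right)+O\left(\kappa\right)$ losses through the radius, chord, and central-angle estimates. Your quantifier bookkeeping (fix $d,\theta$, derive the required $\kappa$, then extract compatible $\varepsilon,\delta$) is also the right order, so nothing further is needed.
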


Using Proposition \ref{equiv flips prop} we get

\begin{proposition}
\label{Dle is local only prop}Given $\kappa >0$, there are $\varepsilon
,\delta >0$ that satisfy the following. Given any $\left( \eta ,\delta \eta
,\varepsilon \right) $--CDG, $\mathcal{T},$ in $\left[ 0,\beta \right]
\times \left[ 0,\beta \right] \subset \mathbb{R}^{2}$ and any finite set of
points $V_{0}$ in $\mathbb{R}^{2}\setminus B\left( \mathcal{T}_{0},3\eta
\right) $, there is a $\kappa $-Delaunay triangulation of $\mathcal{T}%
_{0}\cup V_{0}$ that contains $\mathcal{T}$.
\end{proposition}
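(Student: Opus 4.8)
The plan is to show that the given $\left(\eta,\delta\eta,\varepsilon\right)$--CDG $\mathcal{T}$ is already $\kappa$--Delaunay, that the far points $V_{0}$ cannot disturb it, and that the region outside $\left\vert\mathcal{T}\right\vert$ can be filled in by a flipping argument that never touches $\mathcal{T}$. First I would fix the constants. Given $\kappa$, invoke Proposition \ref{equiv flips prop} to choose $\varepsilon,\delta$ so small that every $\left(\eta,\delta\eta,\varepsilon\right)$--CDG is $\tfrac{\kappa}{C}$--Delaunay for a fixed constant $C$ determined below, and shrink $\varepsilon,\delta$ further using Proposition \ref{almost chew} so that every circumdisk of $\mathcal{T}$ has radius $\leq\eta+d$ with $d$ tiny, every edge has length in $\left[\eta-d,2\eta+d\right]$, and every angle is $\geq\tfrac{\pi}{6}-\theta$. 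The hypothesis $V_{0}\subset\mathbb{R}^{2}\setminus B\left(\mathcal{T}_{0},3\eta\right)$ then decouples the two point sets: for any triangle $\Delta$ of $\mathcal{T}$ with circumcenter $x$, vertex $v$, and any $p\in V_{0}$, we have $\dist(p,x)\geq\dist(p,v)-\dist(v,x)>3\eta-(\eta+d)>\eta+d$, so $p\notin D_{\Delta}$. Hence no point of $V_{0}$ lies in any circumdisk of $\mathcal{T}$, and the triangles of $\mathcal{T}$ remain $\tfrac{\kappa}{C}$--Delaunay with respect to $\mathcal{T}_{0}\cup V_{0}$.

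Next I would construct the triangulation. Since $\mathcal{T}$ triangulates the convex hull of $\mathcal{T}_{0}$ and, by the same separation estimate, no point of $V_{0}$ lies in $\left\vert\mathcal{T}\right\vert$, I can extend $\mathcal{T}$ to some triangulation $\mathcal{T}^{\ast}$ of $\mathcal{T}_{0}\cup V_{0}$ that restricts to $\mathcal{T}$ on $\left\vert\mathcal{T}\right\vert$. I then run the angle flip test (Proposition \ref{two tests are one prop}) on $\mathcal{T}^{\ast}$, but flip only edges that are not edges of $\mathcal{T}$. This restricted procedure still terminates by the usual monotonicity of the flip algorithm; it leaves $\mathcal{T}$ untouched because no edge of $\mathcal{T}$ is ever flipped, so the inner region stays equal to $\mathcal{T}$; and it produces a triangulation $\mathcal{\tilde{T}}\supset\mathcal{T}$ of $\mathcal{T}_{0}\cup V_{0}$ in which every non--$\mathcal{T}$ edge is locally Delaunay. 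It remains to verify that $\mathcal{\tilde{T}}$ is $\kappa$--Delaunay.

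The triangles of $\mathcal{\tilde{T}}$ inside $\left\vert\mathcal{T}\right\vert$ are exactly those of $\mathcal{T}$, already $\tfrac{\kappa}{C}$--Delaunay in $\mathcal{T}_{0}\cup V_{0}$. The crux is the outer triangles, whose circumdisks might bulge back across a boundary edge $ab$ of $\mathcal{T}$ and swallow an interior vertex. I would control this by a nesting argument for circles through the common chord $ab$. Let $abp$ be an outer triangle on a boundary edge $ab$, and let $abc$ be the triangle of $\mathcal{T}$ on the other side of $ab$. Since $p\in V_{0}$ has $\dist(p,a),\dist(p,b)>3\eta$, the inward sagitta $s_{p}$ of the circumcircle of $abp$ past the line $l_{ab}$ is small (a direct estimate gives $s_{p}<\tfrac{2}{5}\eta$), while Lemma \ref{dist to opp segm lemma} forces $\dist(c,l_{ab})\geq\tfrac{\eta}{2}$ up to the controllable error $d$, so the inward sagitta $s_{c}$ of the circumcircle of $abc$ satisfies $s_{c}\geq\tfrac{\eta}{2}>s_{p}$. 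Because two circles through $a$ and $b$ meet only at $a,b$, the inward circular segment cut from $D_{abp}$ by $l_{ab}$ is nested inside that of $D_{abc}$; hence any vertex $w$ that $D_{abp}$ reaches on the interior side already lies in $D_{abc}$, and the nesting bounds $\dist(w,\partial D_{abp})$ by a fixed multiple of $\dist(w,\partial D_{abc})$, the multiple being controlled because all the radii involved are comparable to $\eta$. As $\mathcal{T}$ is $\tfrac{\kappa}{C}$--Delaunay, $w$ is within $\tfrac{\kappa}{C}$ of $\partial D_{abc}$, hence within $\kappa$ of $\partial D_{abp}$ once $C$ absorbs this multiple. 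The other outer triangles either have empty circumdisks, since their edges are locally Delaunay, or reduce to this boundary case at the first boundary edge their circumdisk crosses, so the same estimate applies, completing the proof that $\mathcal{\tilde{T}}$ is $\kappa$--Delaunay.

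I expect the nesting/interface step to be the main obstacle. Everything else is either a one--line separation estimate together with appeals to Propositions \ref{equiv flips prop} and \ref{almost chew}, or the standard termination of the constrained flip algorithm; by contrast, transferring the $\kappa$--emptiness of $D_{abc}$ to $D_{abp}$ requires the quantitative comparison of two circular segments on the chord $ab$, and this is precisely the place where both hypotheses—the separation $\dist(V_{0},\mathcal{T}_{0})>3\eta$ (which makes $s_{p}$ small) and the near--Delaunay geometry of $\mathcal{T}$ (which holds $c$, and with it the arc $\partial D_{abc}$, a definite distance inward)—are used simultaneously.
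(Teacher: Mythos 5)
Your first paragraph \emph{is} the paper's entire proof: choose $\varepsilon,\delta$ via Propositions \ref{equiv flips prop} and \ref{almost chew} (the paper takes $d=\eta/2$), and observe that since every circumdisk of $\mathcal{T}$ has radius less than $\tfrac{3}{2}\eta$ while $V_{0}$ is $3\eta$--separated from $\mathcal{T}_{0}$, no point of $V_{0}$ enters any circumdisk of $\mathcal{T}$, so each triangle of $\mathcal{T}$ is already a $\kappa$--Delaunay triangle of $\mathcal{T}_{0}\cup V_{0}$. The paper stops there, leaving implicit both the construction of a triangulation of $\mathcal{T}_{0}\cup V_{0}$ containing $\mathcal{T}$ and the verification that the \emph{new} triangles are $\kappa$--Delaunay. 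Your second and third paragraphs supply exactly these omitted steps, via a constrained flip algorithm and a nesting argument for the two circular segments that $D_{abp}$ and $D_{abc}$ cut off on the common chord $ab$; the sagitta estimates ($s_{p}<\tfrac{2}{5}\eta$ because $\sphericalangle apb$ must be acute when $|ab|\leq 2\eta+d$ and $\dist(p,\{a,b\})\geq 3\eta$, versus $s_{c}\geq\tfrac{\eta}{2}-O(d+\theta)$ from Lemma \ref{dist to opp segm lemma}) are correct. So your route is not different so much as more complete; what it buys is an actual verification of the definition of $\kappa$--Delaunay for every $2$--simplex of the extended triangulation, which the paper's reader must reconstruct.

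One step deserves more care than you give it: the claim that nesting bounds $\dist(w,\partial D_{abp})$ by a fixed multiple of $\dist(w,\partial D_{abc})$ ``because all the radii involved are comparable to $\eta$.'' When the nearest point of $\partial D_{abc}$ to $w$ lies on the inner arc, the segment from $w$ to that point must cross $\partial D_{abp}$ (since the inner arc of $D_{abc}$ is disjoint from $D_{abp}$ away from $a,b$), and you even get the multiple $1$. But when $w$ sits near $a$ or $b$, its nearest point on $\partial D_{abc}$ can lie on the outer arc, and the comparison then rests not on the radii but on the transversality of $\partial D_{abc}$ with the line $l_{ab}$ at $a$ and $b$: the tangent--chord angle there equals the inscribed angle $\sphericalangle acb\geq\tfrac{\pi}{6}-\theta$, which confines such $w$ to an $O(\kappa/C)$--neighborhood of $a$ or $b$ and hence of $\partial D_{abp}$. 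This is repairable with the constants you already control, so it is a gap in justification rather than in the argument, but as written the stated reason does not support the stated bound.
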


\begin{proof}
Choose $\delta ,\varepsilon >0$ so that Part 1 of Proposition \ref{almost
chew} holds with $d=\eta /2$ and so that Proposition \ref{equiv flips prop}
implies $\mathcal{T}$ is $\kappa $-Delaunay. Let $\Delta \in \mathcal{T}_{2}$
have circumdisk $D_{\Delta }.$ Since $\mathcal{T}$ is a $\kappa $-Delaunay
triangulation of $\mathcal{T}_{0},$%
\begin{equation*}
D_{\Delta }\cap \mathcal{T}_{0}\subset B\left( \partial D_{\Delta },\kappa
\right) .
\end{equation*}%
Since $V_{0}\subset \mathbb{R}^{2}\setminus B\left( \mathcal{T}_{0},3\eta
\right) $ and the radius of $D_{\Delta }$ is less than $\frac{3}{2}\eta ,$ 
\begin{equation*}
D_{\Delta }\cap V_{0}=\varnothing .
\end{equation*}%
Thus 
\begin{equation*}
D_{\Delta }\cap \left( \mathcal{T}_{0}\cup V_{0}\right) \subset B\left(
\partial D_{\Delta },\kappa \right) ,
\end{equation*}%
and $\Delta $ is a $\kappa $-Delaunay triangle of $\mathcal{T}_{0}\cup
V_{0}. $
\end{proof}

We can now state the main result of the section, which is the following
extension theorem for almost $\mathrm{CDGs.}$ It has the added feature that
error estimates, $\varepsilon $ and $\delta ,$ for the new simplices can be
chosen to be $0.$

\begin{theorem}
\label{real chew thm}For $\varepsilon $, $\xi ,$ and $\delta $ as in
Corollary \ref{stable edge cor} and $\beta \geq 50\eta ,$ let $\mathcal{T}$
be a $\xi \eta $--stable $\left( \eta ,\delta \eta ,\varepsilon \right) $%
--CDG in $\left[ 0,\beta \right] \times \left[ 0,\beta \right] \subset 
\mathbb{R}^{2}.$ There\ is a $\xi \eta $--stable $\left( \eta ,\delta \eta
,\varepsilon \right) $--CDG, $\mathcal{\tilde{T}},$\ which extends $\mathcal{%
T}$\ and has the following properties.

\begin{enumerate}
\item $\left[ 0,\beta \right] \times \left[ 0,\beta \right] \subset B\left( 
\mathcal{\tilde{T}}_{0},\eta \right) .$

\item The buffer of $\mathcal{\tilde{T}}$ extends the buffer of $\mathcal{T}%
, $ that is, 
\begin{equation}
\left( \mathcal{T}_{\mathrm{x}}\right) _{0}\subset \left( \mathcal{\tilde{T}}%
_{\mathrm{x}}\right) _{0}.  \label{extention}
\end{equation}

\item $\left( \mathcal{\tilde{T}}_{\mathrm{x}}\right) _{0}$ is a subset of $%
\left[ 0,\beta +6\eta \right] \times \left[ 0,\beta +6\eta \right] $ so that%
\begin{equation}
\mathrm{dist}\left( \left( \mathcal{\tilde{T}}_{\mathrm{x}}\right)
_{0}\setminus \left( \mathcal{T}_{\mathrm{x}}\right) _{0},\mathcal{T}%
_{0}\right) \geq 3\eta ,  \label{only far away}
\end{equation}%
\begin{equation}
\left[ 0,\beta +6\eta \right] \times \left[ 0,\beta +6\eta \right] \subset
B\left( \left( \mathcal{\tilde{T}}_{\mathrm{x}}\right) _{0},\eta \right) ,%
\text{ and\label{eta dense}}
\end{equation}%
\begin{equation}
\mathrm{dist}\left( v,w\right) \geq \eta ,  \label{eta sep}
\end{equation}

for $v\in \left( \mathcal{\tilde{T}}_{\mathrm{x}}\right) _{0}\setminus
\left( \mathcal{T}_{\mathrm{x}}\right) _{0}$ and $w\in \left( \mathcal{%
\tilde{T}}_{\mathrm{x}}\right) _{0}.$

\item $\mathcal{\tilde{T}}_{\mathrm{x}}$ contains all legal subgraphs of $%
\mathcal{T}_{\mathrm{x}}.$

\item Every edge of $\mathcal{\tilde{T}}$ that has a vertex in $\left( 
\mathcal{\tilde{T}}_{0}\right) _{\mathrm{x}}\setminus \mathcal{T}_{0}$ is
Delaunay in the sense that it passes the $\varepsilon $--angle flip test
with $\varepsilon =0$.

\item Every edge with a vertex in $\left( \mathcal{\tilde{T}}_{0}\right) _{%
\mathrm{x}}\setminus \left( \mathcal{T}_{0}\right) _{\mathrm{x}}$ has length 
$\leq 2\eta .$
\end{enumerate}
\end{theorem}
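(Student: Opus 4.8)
The plan is to enlarge $\mathcal{T}$ in two stages, the vertex set first and the triangulation second, keeping the given complex $\mathcal{T}$ untouched while making the newly created outer region \emph{genuinely} Delaunay. The engine behind every step is locality. By Proposition~\ref{almost chew}, applied with $d=\tfrac{\eta}{2}$, every circumdisk of a two--simplex of an $(\eta,\delta\eta,\varepsilon)$--CDG has radius at most $\tfrac{3}{2}\eta$, so whether an edge is Delaunay, $\varepsilon$--Delaunay, or lies in the Delaunay graph is decided solely by the vertices within distance $\tfrac{3}{2}\eta$ of it. Because every vertex we adjoin will lie at distance $\geq 3\eta$ from $\mathcal{T}_0$ (this is \eqref{only far away}), it can neither enter the circumdisk of a triangle of $\mathcal{T}$ nor displace an edge of $\mathcal{T}$; conversely, no triangle all of whose vertices are new can have a point of $\mathcal{T}_0$ in its circumdisk.

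First I would build $\left(\tilde{\mathcal{T}}_{\mathrm{x}}\right)_0$. Starting from $\left(\mathcal{T}_{\mathrm{x}}\right)_0$, greedily adjoin points of $\left[0,\beta+6\eta\right]\times\left[0,\beta+6\eta\right]$ that lie outside $B\left(\mathcal{T}_0,3\eta\right)$ and at distance $\geq\eta$ from every point chosen so far, stopping when the set is maximal for this separation; the hypothesis $\beta\geq 50\eta$ leaves ample room for the $6\eta$ collar. Confining the additions to the complement of $B\left(\mathcal{T}_0,3\eta\right)$ gives \eqref{only far away}, the separation \eqref{eta sep} is imposed at each step, maximality yields the density statements (Property 1 and \eqref{eta dense}), and \eqref{extention} together with the containment asserted in Property 3 hold because $\left(\mathcal{T}_{\mathrm{x}}\right)_0$ is kept verbatim and the additions stay inside the enlarged square. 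A direct check confirms that $\left(\tilde{\mathcal{T}}_{\mathrm{x}}\right)_0$ is an $(\eta,\delta\eta)$--Gromov set whose buffer extends that of $\mathcal{T}$, so its triangulation will be a bona fide $(\eta,\delta\eta,\varepsilon)$--CDG.

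Next I would triangulate $\left(\tilde{\mathcal{T}}_{\mathrm{x}}\right)_0=\left(\mathcal{T}_{\mathrm{x}}\right)_0\cup V_0$, where $V_0:=\left(\tilde{\mathcal{T}}_{\mathrm{x}}\right)_0\setminus\left(\mathcal{T}_{\mathrm{x}}\right)_0$, by the locality argument in the proof of Proposition~\ref{Dle is local only prop}. Since $V_0\cap B\left(\mathcal{T}_0,3\eta\right)=\varnothing$ and circumdisks have radius at most $\tfrac{3}{2}\eta$, the two--simplices of $\mathcal{T}$ retain the $\varepsilon$--angle-flip status they enjoyed in $\mathcal{T}_{\mathrm{x}}$, so the flips can be organized to leave $\mathcal{T}$ intact and to alter only the outer region. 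Out there no circumdisk meets $\mathcal{T}_0$, so the flips may be carried to honest completion, producing a genuinely Delaunay triangulation. This is Property 5 (the $\varepsilon=0$ flip test for edges incident to a new vertex), and Property 6 follows because such an edge is a chord of an empty circumdisk, which by the $\eta$--density \eqref{eta dense} has radius $\leq\eta$ (cf.\ Proposition~\ref{small circumdisk prop}). Locality keeps the edges and flip status of $\mathcal{T}$ fixed, so $\tilde{\mathcal{T}}$ extends $\mathcal{T}$ and $\tilde{\mathcal{T}}_{\mathrm{x}}$ still passes the $\varepsilon$--angle flip test everywhere.

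Property 4 and $\xi\eta$--stability remain. A legal edge of $\mathcal{T}_{\mathrm{x}}$ has length $<\bigl(\sqrt{2}-\tfrac{1}{10}\bigr)\eta$, so its diameter disk has radius $<\tfrac{\sqrt{2}}{2}\eta<\eta$ and is untouched by the far additions; by Lemma~\ref{Graph Lemma} it stays in the Delaunay graph of $\left(\tilde{\mathcal{T}}_{\mathrm{x}}\right)_0$ and hence belongs to $\tilde{\mathcal{T}}_{\mathrm{x}}$, giving Property 4. Applying Corollary~\ref{stable edge cor} once more to the boundary of $\tilde{\mathcal{T}}$ renders it $\xi\eta$--stable at a cost of at most $2\eta$, which the collar absorbs. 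I expect the main obstacle to be the \emph{compatibility across the interface}: one must guarantee that the genuinely Delaunay outer triangulation and the retained complex $\mathcal{T}$ assemble into a single honest simplicial complex, with no crossing edges and every region filled. This is precisely where $\xi\eta$--stability of the boundary edges of $\mathcal{T}$ is used. Their diameter disks are small and, by locality, remain empty after the far additions, so by Lemma~\ref{Graph Lemma} these edges persist in the Delaunay graph of $\left(\tilde{\mathcal{T}}_{\mathrm{x}}\right)_0$; the outer triangulation is thereby forced to meet $\mathcal{T}$ exactly along them, and the two pieces glue.
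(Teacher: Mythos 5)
Your proposal is correct and follows essentially the same route as the paper: the same greedy construction of an $\eta$--separated, maximal vertex set in the enlarged square avoiding $B\left( \mathcal{T}_{0},3\eta \right) ,$ the same appeal to the locality of circumdisks (Proposition \ref{Dle is local only prop}) to extend the triangulation and then flip the outer edges to honest Delaunay, the same derivation of Part 6 from empty diameter disks and \eqref{eta dense}, Property 4 from Proposition \ref{Guarn prop}, and a final application of Corollary \ref{stable edge cor} to stabilize the boundary within the $2\eta$ collar. The interface issue you flag at the end is handled in the paper by exactly the mechanism you describe, namely the $\xi \eta$--stability of the boundary edges of $\mathcal{T}.$
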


\begin{proof}
We construct $\left( \mathcal{\tilde{T}}_{0}\right) _{\mathrm{x}}$ by
consecutively choosing points in $\left[ 0,\beta +6\eta \right] \times \left[
0,\beta +6\eta \right] \setminus B\left( \mathcal{T}_{0},3\eta \right) $
that are $\eta $--separated from each other and from $\left( \mathcal{T}%
_{0}\right) _{\mathrm{x}}.$ By compactness the process ends in a finite
number of steps. The final set, $\left( \mathcal{\tilde{T}}_{0}\right) _{%
\mathrm{x}}$ satisfies (\ref{extention}), (\ref{only far away}), and (\ref%
{eta sep}) by construction. $\left( \mathcal{\tilde{T}}_{0}\right) _{\mathrm{%
x}}$ also satisfies (\ref{eta dense}), since otherwise the construction
would have continued for at least one more step.

Let $\mathcal{\tilde{T}}_{\mathrm{x}}$ be an $\varepsilon $--Delaunay
triangulation of $\left( \mathcal{\tilde{T}}_{0}\right) _{\mathrm{x}}$ in
the sense that every edge passes the $\varepsilon $--angle flip test. Since 
\begin{equation*}
\left( \mathcal{\tilde{T}}_{0}\right) _{\mathrm{x}}\setminus \left( \mathcal{%
T}_{0}\right) _{\mathrm{x}}\subset \mathbb{R}^{2}\setminus B\left( \mathcal{T%
}_{0},3\eta \right) ,
\end{equation*}%
it follows from Proposition \ref{Dle is local only prop} that we may choose
the triangulation of $\mathcal{\tilde{T}}$ to be one that extends $\mathcal{T%
}.$ By flipping the edges of $\mathcal{\tilde{T}}_{\mathrm{x}}$ that are not
Delaunay and have a vertex in $\left( \mathcal{\tilde{T}}_{0}\right) _{%
\mathrm{x}}\setminus \mathcal{T}_{0},$ we force $\mathcal{\tilde{T}}_{%
\mathrm{x}}$ to satisfy Conclusion 5. Since the boundary edges of $\mathcal{T%
}$ are $\xi \eta $--stable, we can do this without needing to flip these
boundary edges and while preserving the condition that $\mathcal{\tilde{T}}_{%
\mathrm{x}}$ is an $\varepsilon $--Delaunay triangulation of $\left( 
\mathcal{\tilde{T}}_{0}\right) _{\mathrm{x}}.$ Since an edge passes the
angle flip test if and only if it passes the Lawson flip test, an edge with
a vertex in $\left( \mathcal{\tilde{T}}_{0}\right) _{\mathrm{x}}\setminus
\left( \mathcal{T}_{0}\right) _{\mathrm{x}}$ is the diameter of a disk that
has no vertices in its interior. Part 6 follows from this and (\ref{eta
dense}).

Let 
\begin{equation*}
\mathcal{\tilde{T}}_{0}^{\mathrm{pre}}:=\left( \mathcal{\tilde{T}}%
_{0}\right) _{\mathrm{x}}\cap \left\{ \left[ 0,\beta +\eta \right] \times %
\left[ 0,\beta +\eta \right] \right\} .
\end{equation*}

Let $\mathcal{\tilde{T}}^{\mathrm{pre}}$ be the subcomplex of $\mathcal{%
\tilde{T}}_{\mathrm{x}}$ all of whose vertices are in $\mathcal{\tilde{T}}%
_{0}^{\mathrm{pre}}.$ By Corollary \ref{stable edge cor}, there is a $\xi
\eta $--stable $\left( \eta ,\delta \eta ,\varepsilon \right) $--CDG, $%
\mathcal{\tilde{T}},$ so that 
\begin{eqnarray*}
\mathcal{T}^{\mathrm{pre}} &\subset &\mathcal{\tilde{T}}\subset \mathcal{%
\tilde{T}}_{\mathrm{x}}\text{ and} \\
\left\vert \mathcal{\tilde{T}}\right\vert &\subset &B\left( \left\vert 
\mathcal{\tilde{T}}^{\mathrm{pre}}\right\vert ,2\eta \right) .
\end{eqnarray*}

It follows from Proposition \ref{Guarn prop} and the definition of a legal
graph (\ref{legal dfn}) that $\mathcal{\tilde{T}}$ contains all legal
subgraphs of $\mathcal{T}_{\mathrm{x}}.$
\end{proof}

Informally, Parts 3 and 5 of the previous result say that $\mathcal{\tilde{T}%
}$ is an extension of $\mathcal{T}$ that corrects the two error estimates, $%
\delta $ and $\varepsilon .$ Motivated by this, we propose

\begin{definition}
If $\mathcal{T}$ and $\mathcal{\tilde{T}}$ are related as in Theorem \ref%
{real chew thm}, then we will say that $\mathcal{\tilde{T}}$ is an \textbf{%
error correcting extension }of $\mathcal{T}.$
\end{definition}

\addtocounter{algorithm}{1}

\subsection{Almost Legal Subdivisions of Almost \textrm{CDGs\label{alm leg
subsect}}}

In this subsection, we show how the proof of Theorem \ref{new alm legal
subdiv thm} gives us almost legal subdivisions of almost $\mathrm{CDGs.}$

\begin{definition}
Given $\eta ,\delta >0,$ we say that a graph $\mathcal{T}$ is $\left( \eta
,\delta \eta \right) $--geometric provided its vertices are $\eta \left(
1-\delta \right) $--separated and its edge lengths are all in $\left[ \eta
\left( 1-\delta \right) ,2\eta \left( 1-\delta \right) \right] .$
\end{definition}

\begin{definition}
Let $\mathcal{T}$ be an $\left( \eta ,\delta \eta \right) $--geometric graph$%
.$ Given $\eta ,\delta >0,$ a subdivision $\mathcal{\tilde{T}}$ of $\mathcal{%
T}$ is called $\left( \frac{\eta }{10},\delta \frac{\eta }{10}\right) $--%
\textbf{almost legal} if and only if $\mathcal{\tilde{T}}_{0}$ is $\left( 
\frac{\eta }{10}\left( 1-\delta \right) \right) $--separated and all edges
of $\mathcal{\tilde{T}}$ have length in the interval $\left[ \frac{\eta }{10}%
,\frac{\eta }{10}\left( \sqrt{2}-\frac{1}{10}\right) \right] ,$ except
possibly for edges with a bounding vertex in $\mathcal{T}_{0}.$ We further
require that each edge $\tilde{e}$ with a bounding vertex in $\mathcal{T}%
_{0} $ satisfies 
\begin{equation*}
\mathrm{length}\left( \tilde{e}\right) =\frac{1}{10}\mathrm{length}\left(
e\right) ,
\end{equation*}%
where $e$ is the edge of $\mathcal{T}$ that contains $\tilde{e}.$
\end{definition}

With a minor numerical adjustment, which we leave to the reader, the proof
of Lemma \ref{alm legal subdiv lemma} gives us

\begin{corollary}
Let $\mathcal{T}$ be an $\left( \eta ,\delta \eta \right) $--geometric graph
in $\mathbb{R}^{2}.$ If $\delta $ is sufficiently small, then there is a
subdivision $\mathcal{\hat{T}}$ of $\mathcal{T}$ which is $\left( \frac{\eta 
}{10},\delta \frac{\eta }{10}\right) $-almost legal.{\large \ }
\end{corollary}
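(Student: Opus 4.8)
The plan is to re-run the three-part subdivision from the proof of Lemma \ref{alm legal subdiv lemma}, adjusting the numerical constants to the deformed edge-length range $[\eta(1-\delta),2\eta(1-\delta)]$ and the deformed separation $\eta(1-\delta)$. For each edge $e$ of $\mathcal{T}$ I would cut off two end subedges, one at each endpoint, each of length $\frac{1}{10}\mathrm{length}(e)$. By construction these satisfy the special length requirement in the definition of $\left(\frac{\eta}{10},\delta\frac{\eta}{10}\right)$-almost legal, and their lengths lie in $\left[\frac{\eta}{10}(1-\delta),\frac{2\eta}{10}(1-\delta)\right]$. The remaining interior segment then has length $\frac{8}{10}\mathrm{length}(e)\in\left[\frac{8\eta}{10}(1-\delta),\frac{16\eta}{10}(1-\delta)\right]$.

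Next I would subdivide each interior segment into equal pieces using a step function modeled on $n_{\eta}$ from Proposition \ref{edgesub}, but with domain shifted to this interval of possible interior lengths. The goal is to land every piece in the legal window $\left[\frac{\eta}{10},\frac{\eta}{10}\left(\sqrt{2}-\frac{1}{10}\right)\right]$. Because this window has ratio $\sqrt{2}-\frac{1}{10}$, which exceeds $\frac{m+1}{m}$ for every $m\geq 4$, and the interior lengths force the number of pieces to be on the order of $8$ to $16$, a suitable integer number of pieces always exists; choosing $\delta$ small enough keeps the piece lengths strictly inside the window despite the $(1-\delta)$ factor. This is the ``minor numerical adjustment'' mentioned in the statement.

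The remaining task is the separation estimate: that $\mathcal{\hat{T}}_{0}$ is $\frac{\eta}{10}(1-\delta)$-separated. Two subdivision points on a common edge are separated by at least the minimum piece length $\frac{\eta}{10}(1-\delta)$. The decisive case is two first subdivision points $p_{1},p_{2}$ lying on distinct edges $e_{1},e_{2}$ that share a vertex $v$, with respective far endpoints $w_{1},w_{2}$. Since $p_{i}=v+\frac{1}{10}(w_{i}-v)$, the triangle $vp_{1}p_{2}$ is the image of $vw_{1}w_{2}$ under the homothety of ratio $\frac{1}{10}$ centered at $v$, so $|p_{1}p_{2}|=\frac{1}{10}|w_{1}w_{2}|\geq\frac{1}{10}\eta(1-\delta)=\frac{\eta}{10}(1-\delta)$, using that the vertices of $\mathcal{T}$ are $\eta(1-\delta)$-separated. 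The remaining pairings—a first subdivision point against a far endpoint, or against an interior point of an adjacent edge—are immediate from the triangle inequality and the length bounds above, since interior points sit deeper along their edges and hence farther from the shared vertex.

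The main obstacle is precisely this separation of subdivision points on distinct edges, and the homothety argument is the device engineered for its worst case: two nearly parallel edges of nearly equal length meeting at a small angle. Scaling the end subedges by the fixed factor $\frac{1}{10}$ reproduces a $\frac{1}{10}$-scaled copy of the vertex configuration near $v$, thereby transferring the $\eta(1-\delta)$-separation of $\mathcal{T}_{0}$ down to the new vertices at level $\frac{\eta}{10}(1-\delta)$. Once this is in hand, verifying that the step-function piece lengths remain inside the legal window for small $\delta$ is routine, since those lengths cluster near $\frac{\eta}{10}$, well below the upper bound $\frac{\eta}{10}\left(\sqrt{2}-\frac{1}{10}\right)$.
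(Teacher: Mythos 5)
Your construction is the paper's own: proportional end pieces of length $\frac{1}{10}\mathrm{length}(e)$, a step function for the interior segment, and the homothety observation for two first subdivision points meeting at a common vertex. The step-function arithmetic and the homothety case are correct. The gap is the sentence dismissing ``a first subdivision point against an interior point of an adjacent edge'' as immediate from the triangle inequality because interior points ``sit deeper along their edges and hence farther from the shared vertex.'' Being farther from $v$ along a \emph{different} edge does not make a point far from $p_{1}$: the triangle inequality only gives $|p_{1}q|\geq |\,\mathrm{dist}(v,q)-\mathrm{dist}(v,p_{1})\,|$, and that difference can be essentially zero. Concretely, let $e_{1}=vw_{1}$ have length $2\eta(1-\delta)$ and $e_{2}=vw_{2}$ have length $\eta(1-\delta)$, meeting at a small angle $\theta$ at $v$. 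This is a legitimate $(\eta,\delta\eta)$--geometric graph for every $\theta>0$, since $|w_{1}w_{2}|^{2}=(1-\delta)^{2}\eta^{2}(5-4\cos\theta)\geq (1-\delta)^{2}\eta^{2}$. The first subdivision point of $e_{1}$ is forced to lie at distance $\frac{2\eta(1-\delta)}{10}$ from $v$, while the first \emph{interior} subdivision point of $e_{2}$ lies at distance at least $\frac{\eta(1-\delta)}{10}+\frac{\eta}{10}$ from $v$ --- also essentially $\frac{2\eta}{10}$ --- and no admissible choice of the step function moves it appreciably. Their mutual distance is then of order $\frac{2\eta}{10}\,\theta$, which for small $\theta$ is far below the required $\frac{\eta}{10}(1-\delta)$. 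So this case is not only not immediate; without a lower bound on the angles between adjacent edges it is false, and the same issue arises for new vertices lying on two edges that share no vertex but pass close to one another.

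What rescues the statement in its intended use is that these graphs arise as $1$--skeletons of (almost) CDGs, where all angles are bounded below by roughly $\frac{\pi}{6}$ (Proposition \ref{almost chew}) and Corollary \ref{star cor} keeps non-incident simplices away from each vertex. With an angle bound $\theta\geq\frac{\pi}{6}$ one has $|pq|^{2}=(d_{1}-d_{2})^{2}+2d_{1}d_{2}(1-\cos\theta)$ for points at distances $d_{1},d_{2}$ from $v$ on the two edges, and minimizing over the admissible range ($d_{2}=\frac{2\eta}{10}$, $d_{1}=d_{2}\cos\frac{\pi}{6}$) gives $|pq|=d_{2}\sin\frac{\pi}{6}=\frac{\eta}{10}$ exactly --- the estimate survives with essentially no room to spare, so the angle hypothesis is genuinely load-bearing, not a convenience. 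To repair your write-up you should either add an explicit angle lower bound to the hypotheses and carry out this borderline computation (together with the non-adjacent-edge case), or restrict the corollary to the skeleta to which it is actually applied. The paper's own two-sentence proof of Lemma \ref{alm legal subdiv lemma} is silent on all of this, so you have reproduced its approach faithfully, but the step as you justified it would fail.
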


Let $\mathcal{T}$ be an $\left( \eta ,\delta \eta ,\varepsilon \right) $%
--CDG. Let $\mathcal{L}\left( \text{sk}_{1}\left( \mathcal{T}_{\mathrm{x}%
}\right) \right) $ be the almost legal subdivision of the 1-skeleton sk$_{1}(%
\mathcal{T})$ obtained by applying the previous corollary to sk$_{1}\left( 
\mathcal{T}_{\mathrm{x}}\right) .$ As in the proof of Theorem \ref{new alm
legal subdiv thm}, extend $\mathcal{L}\left( \text{sk}_{1}\left( \mathcal{T}%
_{\mathrm{x}}\right) \right) _{0}$ to a maximal subset $\mathcal{L}\left( 
\mathcal{T}_{\mathrm{x}}\right) _{0}$ of $\left\vert \mathcal{T}_{\mathrm{x}%
}\right\vert $ so that 
\begin{equation*}
\mathrm{dist}\left( v,w\right) \geq \frac{\eta }{10}
\end{equation*}%
for all $v\in \mathcal{L}\left( \mathcal{T}_{\mathrm{x}}\right) _{0}$ and
all $w\in \mathcal{L}\left( \mathcal{T}_{\mathrm{x}}\right) _{0}\setminus 
\mathcal{L}\left( \text{sk}_{1}\left( \mathcal{T}_{\mathrm{x}}\right)
\right) _{0}.$ Define $\mathcal{L}\left( \mathcal{T}\right) _{0}$
analogously, and let $\mathcal{L}\left( \mathcal{T}_{\mathrm{x}}\right) $
and $\mathcal{L}\left( \mathcal{T}\right) $ be $\varepsilon $--Delaunay
triangulations of $\mathcal{L}\left( \mathcal{T}_{\mathrm{x}}\right) _{0}$
and $\mathcal{L}\left( \mathcal{T}\right) _{0},$ respectively. Arguing as in
the proof of Theorem \ref{new alm legal subdiv thm}, we see that $\mathcal{L}%
\left( \mathcal{T}\right) $ is an $\left( \frac{\eta }{10},\delta \frac{\eta 
}{10},\varepsilon \right) $--\textrm{CDG }which is a subdivision of $%
\mathcal{T}$.

This construction respects error correcting extensions. In fact,

\begin{theorem}
Let $\mathcal{T}$ be an $\left( \eta ,\delta \eta ,\varepsilon \right) $%
--CDG in $\left[ 0,\beta \right] \times \left[ 0,\beta \right] \subset 
\mathbb{R}^{2}.$ If $\mathcal{\tilde{T}}$ is an error correcting extension
of $\mathcal{T},$ then there are subdivisions $\mathcal{L}\left( \mathcal{T}%
\right) ,$ $\mathcal{L}\left( \mathcal{\tilde{T}}\right) $ of $\mathcal{T}$
and $\mathcal{\tilde{T}}$, respectively, so that

\begin{enumerate}
\item $\mathcal{L}\left( \mathcal{T}\right) $ and $\mathcal{L}\left( 
\mathcal{\tilde{T}}\right) $ are $\left( \frac{\eta }{10},\delta \frac{\eta 
}{10},\varepsilon \right) $--\textrm{CDGs}.

\item $\mathcal{L}\left( \mathcal{\tilde{T}}\right) $ is an error correcting
of $\mathcal{L}\left( \mathcal{T}\right) .$

\item If $\tilde{G}\subset \mathcal{\tilde{T}}_{1}$ is a legal subgraph of
the $1$--skeleton of $\mathcal{\tilde{T}}$ and $\mathcal{L}\left( \tilde{G}%
\right) $ is a legal, $\left( \frac{\eta }{10},\delta \frac{\eta }{10}%
\right) $\textbf{--}geometric\textbf{\ }subdivision of $\tilde{G}$, then we
can choose $\mathcal{L}\left( \mathcal{\tilde{T}}\right) $ so that it
contains $\mathcal{L}\left( \tilde{G}\right) .$
\end{enumerate}
\end{theorem}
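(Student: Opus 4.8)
The plan is to run the subdivision construction $\mathcal{L}$ on $\mathcal{T}$ and on $\tilde{\mathcal{T}}$ in a coordinated way, so that the passage from $\mathcal{L}(\mathcal{T})$ to $\mathcal{L}(\tilde{\mathcal{T}})$ at scale $\frac{\eta}{10}$ mirrors the passage from $\mathcal{T}$ to $\tilde{\mathcal{T}}$ at scale $\eta$. The decisive structural fact is that the almost legal subdivision of a $1$--skeleton is performed one edge at a time and depends only on the length of each edge. Hence any edge common to $\mathcal{T}$ and $\tilde{\mathcal{T}}$ receives the same subdivision in both constructions; since $\mathcal{T}\subset \tilde{\mathcal{T}}$, this already makes $\mathcal{L}(\mathrm{sk}_1(\mathcal{T}))$ a subcomplex of $\mathcal{L}(\mathrm{sk}_1(\tilde{\mathcal{T}}))$.

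First I would build $\mathcal{L}(\mathcal{T})$ together with its buffer $\mathcal{L}(\mathcal{T}_{\mathrm{x}})$ exactly as in the construction preceding the theorem, which yields the first half of Part 1. To produce $\mathcal{L}(\tilde{\mathcal{T}})$ I would seed the same construction with $\mathcal{L}(\mathcal{T})$: I subdivide $\mathrm{sk}_1(\tilde{\mathcal{T}}_{\mathrm{x}})$, reusing on the edges of $\mathcal{T}$ the subdivision already chosen for $\mathcal{L}(\mathcal{T})$, and then extend the vertex set $\mathcal{L}(\mathcal{T}_{\mathrm{x}})_0$ greedily to a maximal $\frac{\eta}{10}$--separated set $\mathcal{L}(\tilde{\mathcal{T}}_{\mathrm{x}})_0$, adjoining points only in the region that $\tilde{\mathcal{T}}$ covers beyond $\mathcal{T}$. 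Because the new vertices of $\tilde{\mathcal{T}}$ lie at distance $\geq 3\eta$ from $\mathcal{T}_0$ by (\ref{only far away}), and because the flips used to build the error correcting extension involve only edges incident to these new vertices, the subdivision points sitting near $\mathcal{T}$ are left untouched. I would then choose an $\varepsilon$--Delaunay triangulation of $\mathcal{L}(\tilde{\mathcal{T}}_{\mathrm{x}})_0$ extending $\mathcal{L}(\mathcal{T})$, invoking the locality of the Delaunay condition as in Proposition \ref{Dle is local only prop} but now at scale $\frac{\eta}{10}$: each new subdivision point is too far from $\mathcal{L}(\mathcal{T})_0$ to enter the circumdisk of any triangle of $\mathcal{L}(\mathcal{T})$. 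This completes Part 1.

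For Part 2, I would verify the six defining properties of an error correcting extension (Theorem \ref{real chew thm}) for the pair $\mathcal{L}(\mathcal{T})\subset \mathcal{L}(\tilde{\mathcal{T}})$ by transporting the corresponding properties of $\mathcal{T}\subset \tilde{\mathcal{T}}$ to scale $\frac{\eta}{10}$. The separation, density, and far--away conditions (\ref{only far away})--(\ref{eta sep}) are immediate from the greedy construction, and the Delaunay character of the new edges (Conclusions $5$ and $6$ of Theorem \ref{real chew thm}) is inherited from the $\varepsilon=0$ flip property of $\tilde{\mathcal{T}}$; the buffer nesting (\ref{extention}) holds by construction. The required boundary stability is automatic, since every boundary edge of $\mathcal{L}(\mathcal{T})$ is a subdivision edge and hence legal, so Corollary \ref{stable by len cor} makes it $\xi\frac{\eta}{10}$--stable. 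Part 3 follows from Proposition \ref{Guarn prop} at scale $\frac{\eta}{10}$: each edge of the legal subdivision $\mathcal{L}(\tilde{G})$ lies in the Delaunay graph of $\mathcal{L}(\tilde{\mathcal{T}}_{\mathrm{x}})_0$ and is therefore forced into every $\varepsilon$--Delaunay triangulation, so $\mathcal{L}(\tilde{\mathcal{T}})$ may be chosen to contain $\mathcal{L}(\tilde{G})$.

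The main obstacle is the coordination required in Part 2: I must ensure that the greedy extension of the buffer vertex set keeps the new subdivision points $\frac{\eta}{10}$--separated from those produced near the shared vertices of $\mathcal{T}$, where old and new edges meet at angles only bounded below by $\frac{\pi}{6}$. This is precisely the difficulty handled by the almost legal subdivision at the coarse scale, and the point is that it persists after passing to $\tilde{\mathcal{T}}$ because the error correcting extension adds its new vertices far from $\mathcal{T}_0$ and leaves the $\xi\eta$--stable boundary of $\mathcal{T}$ unflipped. Verifying that the subdivided buffer still certifies the $\varepsilon$--angle flip test across the seam between the old simplices and the newly added ones—so that $\mathcal{L}(\tilde{\mathcal{T}})$ is genuinely an $(\frac{\eta}{10},\delta\frac{\eta}{10},\varepsilon)$--CDG extending $\mathcal{L}(\mathcal{T})$—is the crux of the argument, and is where Proposition \ref{equiv flips prop} at the finer scale does the work.
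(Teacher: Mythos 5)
Your overall architecture---run the subdivision construction on $\mathcal{T}$ and on $\tilde{\mathcal{T}}$ in a coordinated way and let locality handle the rest---matches the paper's, which indeed dismisses almost everything with ``everything follows from the construction.'' But there is exactly one property that does \emph{not} follow from the construction, and it is the one you gesture at and then set aside: the $\frac{\eta}{10}$--separation between a vertex $v\in\mathcal{L}\left(\tilde{\mathcal{T}}_{\mathrm{x}}\right)_{0}\setminus\mathcal{L}\left(\mathcal{T}_{\mathrm{x}}\right)_{0}$ and a vertex $w\in\mathcal{L}\left(\mathcal{T}_{\mathrm{x}}\right)_{0}$ in the case where \emph{neither} point lies on the subdivided $1$--skeleton. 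Your claim that this is ``immediate from the greedy construction'' because you seed the second greedy process with $\mathcal{L}(\mathcal{T})$ does not close this case: the two interior-point processes are run over overlapping regions relative to \emph{different} $1$--skeletons (the error correcting extension may flip edges of $\mathcal{T}_{\mathrm{x}}$ having a vertex outside $\mathcal{T}_{0}$, so $\mathrm{sk}_{1}\left(\tilde{\mathcal{T}}_{\mathrm{x}}\right)$ is not simply a superset of $\mathrm{sk}_{1}\left(\mathcal{T}_{\mathrm{x}}\right)$, and the required seed set need not itself satisfy the separation constraints), so a point legitimately placed by one process can a priori land within $\frac{\eta}{10}$ of a point legitimately placed by the other.

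The paper closes this case with a genuine geometric argument that your proposal never produces: if $\mathrm{dist}(v,w)<\frac{\eta}{10}$ and both points are $\frac{\eta}{10}$--separated from the subdivided skeleton, then Corollary \ref{star cor}, applied with $\frac{\eta}{10}$ in the role of $\eta$, forces $v$ and $w$ to be opposite vertices of two triangles sharing an edge; the equality analysis in Lemma \ref{dist to opp segm lemma} then forces that shared edge to have length nearly $\sqrt{3}\,\frac{\eta}{10}$ with the angles at $v$ and $w$ each nearly $\frac{2\pi}{3}$, so their sum is nearly $\frac{4\pi}{3}$ and the edge fails the $\varepsilon$--angle flip test by a margin close to $\frac{\pi}{3}$---a contradiction. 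Relatedly, you misidentify the crux: you locate it in verifying the $\varepsilon$--angle flip test across the seam via Proposition \ref{equiv flips prop}, but that part really is routine, since the new vertices of $\tilde{\mathcal{T}}$ lie at distance $\geq 3\eta$ from $\mathcal{T}_{0}$ by (\ref{only far away}) and Proposition \ref{Dle is local only prop} applies; the hard point is the vertex separation above. Your treatment of Part 3 via Proposition \ref{Guarn prop} at scale $\frac{\eta}{10}$ is consistent with the paper.
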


\begin{proof}
With one exception, everything follows from the construction. The
exceptional property is that 
\begin{equation*}
\mathrm{dist}\left( v,w\right) \geq \frac{\eta }{10}
\end{equation*}%
for $v\in \mathcal{L}\left( \mathcal{\tilde{T}}_{\mathrm{x}}\right)
_{0}\setminus \mathcal{L}\left( \mathcal{T}_{\mathrm{x}}\right) _{0}$ and $%
w\in \mathcal{L}\left( \mathcal{T}_{\mathrm{x}}\right) _{0}.$ This, also, is
immediate from the construction if either point is in $\mathcal{L}\left( 
\text{sk}_{1}\left( \mathcal{\tilde{T}}_{\mathrm{x}}\right) \right) _{0}$.

So suppose that $v\in \mathcal{L}\left( \mathcal{\tilde{T}}_{\mathrm{x}%
}\right) _{0}\setminus \mathcal{L}\left( \mathcal{T}_{\mathrm{x}}\right)
_{0} $ and $w\in \mathcal{L}\left( \mathcal{T}_{\mathrm{x}}\right) _{0}$
satisfy 
\begin{equation}
\mathrm{dist}\left( v,w\right) <\frac{\eta }{10},  \label{nope}
\end{equation}%
and neither point is in $\mathcal{L}\left( \text{sk}_{1}\left( \mathcal{%
\tilde{T}}_{\mathrm{x}}\right) \right) _{0}.$ It follows that both $v$ and $%
w $ are $\frac{\eta }{10}$--separated from $\mathcal{L}\left( \text{sk}%
_{1}\left( \mathcal{\tilde{T}}_{\mathrm{x}}\right) \right) _{0}.$ Applying
Corollary \ref{star cor} with $\frac{\eta }{10}$ playing the role of $\eta ,$
and using (\ref{nope}), we see that $v$ and $w$ must be opposite vertices of
two triangles that share an edge. By Lemma \ref{dist to opp segm lemma},
this edge must nearly have length $\sqrt{3}\frac{\eta }{10},$ and the angles
at $v$ and $w$ of the respective triangles must nearly be $\frac{2\pi }{3}.$
Since such an edge fails the angle flip test by a large margin, no such
configuration can exist.
\end{proof}

The notion of error correcting extensions also makes sense for graphs in $%
\mathbb{R}.$

\begin{definition}
Let $\mathcal{\tilde{T}}$ and $\mathcal{T}$ be $\left( \eta ,\delta \eta
\right) $--geometric graphs with $\mathcal{T}\subset \mathcal{\tilde{T}}$
and $\left\vert \mathcal{T}\right\vert ,\left\vert \mathcal{\tilde{T}}%
\right\vert \subset \left[ 0,\beta \right] .$ We say that $\mathcal{\tilde{T}%
}$ is an error correcting extension of $\mathcal{T}$ provided $\left( 
\mathcal{\tilde{T}}\right) _{0}$ is a maximal subset of $\left[ 0,\beta %
\right] $ so that 
\begin{equation*}
\mathrm{dist}\left( v,w\right) \geq \eta
\end{equation*}%
for all $v\in \mathcal{\tilde{T}}_{0}\ $and all $w\in \mathcal{T}_{0}.$
\end{definition}

Via simpler arguments we get

\begin{corollary}
\label{perfect and subdiv cor}Let $\mathcal{\tilde{T}}$ and $\mathcal{T}$ be 
$\left( \eta ,\delta \eta \right) $--geometric graphs with $\left\vert 
\mathcal{T}\right\vert ,\left\vert \mathcal{\tilde{T}}\right\vert \subset %
\left[ 0,\beta \right] .$ If $\mathcal{\tilde{T}}$ is an error correcting
extension of $\mathcal{T},$ then we can choose the legal subdivisions $%
\mathcal{L}\left( \mathcal{T}\right) $ and $\mathcal{L}\left( \mathcal{%
\tilde{T}}\right) $ of Lemma \ref{legal subdvi cor} so that $\mathcal{L}%
\left( \mathcal{\tilde{T}}\right) $ is an error correcting extension of $%
\mathcal{L}\left( \mathcal{T}\right) .$
\end{corollary}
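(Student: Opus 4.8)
The plan is to follow the proof of the preceding theorem, but to exploit the fact that a graph in $\mathbb{R}$ carries no simplices above its $1$--skeleton, so the delicate interior configuration that drove that proof cannot occur here; this is exactly what makes the present arguments ``simpler.''

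First I would observe that the legal subdivision $\mathcal{L}$ of Lemma \ref{legal subdvi cor} is assembled edge by edge through Proposition \ref{edgesub}: an edge $e$ is cut into $n_{\eta}(\mathrm{length}(e))$ subedges of equal length, a prescription depending only on $\mathrm{length}(e)$. Since $\mathcal{T}\subset \mathcal{\tilde{T}}$, every edge of $\mathcal{T}$ is an edge of $\mathcal{\tilde{T}}$ and is therefore cut in exactly the same way in $\mathcal{L}(\mathcal{T})$ and in $\mathcal{L}(\mathcal{\tilde{T}})$. Subdividing the remaining edges of $\mathcal{\tilde{T}}$ by the same rule thus yields $\mathcal{L}(\mathcal{T})\subset \mathcal{L}(\mathcal{\tilde{T}})$, and Lemma \ref{legal subdvi cor} guarantees that both subdivisions are legal and $\frac{\eta}{10}$--geometric.

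Next I would verify the separation demanded of an error correcting extension, namely that any two distinct vertices of $\mathcal{L}(\mathcal{\tilde{T}})$ — in particular a new vertex $v\in \mathcal{L}(\mathcal{\tilde{T}})_{0}\setminus \mathcal{L}(\mathcal{T})_{0}$ and any $w\in \mathcal{L}(\mathcal{\tilde{T}})_{0}$ — are at distance $\geq \frac{\eta}{10}$. This is precisely the step where the two-dimensional argument had to invoke Corollary \ref{star cor} and Lemma \ref{dist to opp segm lemma} to exclude two vertices sitting opposite a shared edge of length near $\sqrt{3}\frac{\eta}{10}$. In dimension one there is nothing to exclude: $\mathcal{L}(\mathcal{\tilde{T}})$ is $\frac{\eta}{10}$--geometric, so by definition its vertices are $\frac{\eta}{10}$--separated. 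Concretely, each subedge has length in $\left[ \frac{\eta}{10},\frac{11}{10}\cdot \frac{\eta}{10}\right]$, and at a vertex of $\mathcal{\tilde{T}}$ the two adjacent subedges run in opposite directions along the line, so consecutive subdivision points, whether drawn from an old edge or a new one, are never closer than $\frac{\eta}{10}$.

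Finally I would address maximality, combining the maximality already built into $\mathcal{\tilde{T}}_{0}$ with the density of the subdivision: since each subedge has length at most $\frac{11}{10}\cdot \frac{\eta}{10}<\frac{2\eta}{10}$, the set $\mathcal{L}(\mathcal{\tilde{T}})_{0}$ is $\frac{\eta}{10}$--dense along $\left\vert \mathcal{\tilde{T}}\right\vert$, so no point of $\left\vert \mathcal{\tilde{T}}\right\vert$ can be adjoined while preserving $\frac{\eta}{10}$--separation, while off $\left\vert \mathcal{\tilde{T}}\right\vert$ maximality is inherited verbatim from that of $\mathcal{\tilde{T}}_{0}$. I expect the only genuine chore to be matching this maximality statement across the two scales $\eta$ and $\frac{\eta}{10}$; unlike the two-dimensional case, no geometric rigidity input (Lemma \ref{dist to opp segm lemma}) enters, which is exactly why the corollary can be obtained ``via simpler arguments.''
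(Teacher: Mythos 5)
The paper offers no written proof of this corollary — it is simply asserted to follow ``via simpler arguments'' from the preceding theorem — and your reconstruction is precisely the intended one: the deterministic, length-only subdivision rule of Proposition \ref{edgesub} makes $\mathcal{L}\left( \mathcal{T}\right) \subset \mathcal{L}\left( \mathcal{\tilde{T}}\right) $ automatic on shared edges, and in $\mathbb{R}$ the $\frac{\eta }{10}$--separation of the new vertices holds for elementary reasons, with no need for the rigidity input of Corollary \ref{star cor} and Lemma \ref{dist to opp segm lemma} that drives the two-dimensional case. Your proposal is correct and takes essentially the same approach.
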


Chew's angle theorem combined with the fact that Riemannian manifolds are
infinitesimally euclidean and the definition of CDGs immediately yields

\begin{theorem}
\label{compact surfaces thm}Let $S$ be a compact Riemannian $2$--manifold.
For every $\varepsilon >0$ there is an $\eta _{0}$ so that for all $\eta \in
\left( 0,\eta _{0}\right) $ every $\eta $--Gromov subset of $S$ admits a
triangulation $\mathcal{T}$ for which all side lengths are in $\left[ \eta
\left( 1-\varepsilon \right) ,2\eta \left( 1+\varepsilon \right) \right] \ $%
and all angles are $\geq \frac{\pi }{6}-\varepsilon .$
\end{theorem}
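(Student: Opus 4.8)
The plan is to transfer the Euclidean theory of Sections \ref{chew sect} and \ref{epilog section} to $S$ through the exponential map, exploiting that a compact Riemannian surface is infinitesimally Euclidean on scale $\eta$. First I would use compactness of $S$ to fix uniform geometric bounds: an injectivity radius bound $\mathrm{inj}(S)\geq i_{0}>0$ and a curvature bound $|\sec S|\leq K$. For any $p\in S$ and any $r<i_{0}$, the map $\exp_{p}^{-1}:B(p,r)\to T_{p}S\cong\mathbb{R}^{2}$ is a diffeomorphism, and in these geodesic normal coordinates the metric differs from the Euclidean metric by terms of order $Kr^{2}$. Consequently both geodesic distances and angles between geodesics in $B(p,r)$ agree with their Euclidean counterparts up to a multiplicative error of $1+CKr^{2}$. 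Taking $r$ of order $\eta$ and $\eta<\eta_{0}$ small, this error can be made smaller than any prescribed tolerance.

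Given such bounds, I would fix the tolerances $\delta,\varepsilon'>0$ provided by Proposition \ref{almost chew} for values of $d$ and $\theta$ small enough that the resulting ranges $[\eta-d,2\eta+d]$ and the bound $\pi/6-\theta$ lie inside $[\eta(1-\varepsilon),2\eta(1+\varepsilon)]$ and $\pi/6-\varepsilon$, and then I would shrink $\eta_{0}$ further so that the normal-coordinate distortion on every ball of radius $6\eta$ stays below these tolerances. Now let $Y$ be an $\eta$--Gromov subset of $S$. Around each $p\in Y$, reading $Y\cap B(p,6\eta)$ through $\exp_{p}^{-1}$ produces a subset of $\mathbb{R}^{2}$ that, by the distortion estimate, is $(\eta,\delta\eta)$--Gromov in its chart: it is $\eta(1-\delta)$--separated because $Y$ is $\eta$--separated, and $\eta(1+\delta)$--dense because $Y$ is $\eta$--dense in $S$. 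This is exactly the almost-CDG data of Subsection \ref{almost CDGs subsect}, so the corresponding local triangulation is an $(\eta,\delta\eta,\varepsilon')$--CDG, and Proposition \ref{almost chew} gives all its angles $\geq\pi/6-\theta$ and all its edge lengths in $[\eta-d,2\eta+d]$, while Proposition \ref{small circumdisk prop}, transplanted through the chart, bounds every circumdisk radius by $(1+o(1))\eta$. Pulling these estimates back through $\exp_{p}$ costs only a further factor $1+CK\eta^{2}$, which yields the asserted geodesic side-length and angle bounds triangle by triangle.

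The remaining, and genuinely delicate, point is globalization: I must assemble these local Delaunay pictures into a single geodesic triangulation of $S$. The crucial leverage is that the Delaunay condition is local -- by the circumdisk bound each geodesic circumdisk has radius under $(1+o(1))\eta$, so whether a geodesic edge of $Y$ passes the geodesic angle flip test depends only on the points of $Y$ within distance $O(\eta)$, all of which lie in a single nearly-Euclidean chart; Proposition \ref{equiv flips prop} guarantees these flip tests are faithfully reproduced in the chart. I would therefore define the candidate complex globally by the empty-geodesic-circumdisk condition on $Y\subset S$ and verify, chart by chart using the distortion estimate, that it restricts in each chart to the Euclidean $\varepsilon'$--Delaunay triangulation of the corresponding almost-Gromov set. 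The main obstacle is confirming that this globally-defined collection of geodesic simplices is in fact a triangulation of $S$: that its faces cover $S$, meet only along shared faces, and carry no combinatorial inconsistency across overlapping charts. This is precisely the step where the infinitesimally Euclidean heuristic must be upgraded to a genuine gluing argument; once it is in place, the edge-length and angle bounds transfer immediately from the local Euclidean estimates above.
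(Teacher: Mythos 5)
Your proposal follows the same route the paper intends: transfer Chew's estimates to $S$ via normal coordinates, using the almost-CDG machinery of Section \ref{epilog section} and Proposition \ref{almost chew} to absorb the metric distortion. In fact the paper offers no written proof beyond the one-sentence assertion that the result is ``immediate'' from Chew's Angle Theorem, the infinitesimally Euclidean principle, and the definition of CDGs, so your write-up is substantially more detailed than the source itself. The globalization issue you honestly flag is the only genuinely nontrivial point, and the paper does not address it either; it can be closed by observing that for $\eta$ small relative to the injectivity radius every geodesic circumdisk has radius under roughly $2\eta$, so the empty-circumdisk condition on $Y\subset S$ is decided entirely inside a single chart, any two overlapping normal-coordinate charts induce the same (almost-)Delaunay structure on their common domain because the transition between them distorts distances and angles only by $O(K\eta^{2})$, and coverage of $S$ by the resulting geodesic triangles follows from the $\eta$--density of $Y$ exactly as in the proof of Proposition \ref{small circumdisk prop}.
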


The analog of this result for surfaces that are isometrically embedded in $%
\mathbb{R}^{3}$ is proven by Chew in \cite{Chew2}.

\end{document}